\documentclass[11pt]{amsart}
\usepackage{amsmath, amsfonts, amsthm}
\usepackage{amssymb}
\usepackage{mathrsfs}
\usepackage[a4paper, margin=1in]{geometry}
\usepackage{tikz-cd}
\usepackage{quiver}
\usepackage{enumitem}
\usepackage{tikz}
\usepackage{circuitikz}
\usepackage{subfigure}

\usepackage{hyperref}
\hypersetup{
    colorlinks=true,
    linkcolor=blue,
    filecolor=magenta,      
    urlcolor=cyan,
    citecolor=red
}
\newcommand{\Mod}[1]{\ (\mathrm{mod}\ #1)}

\newtheorem{theorem}{Theorem}[section]
\newtheorem{lemma}[theorem]{Lemma}
\newtheorem{proposition}[theorem]{Proposition}
\newtheorem{corollary}[theorem]{Corollary}

\theoremstyle{remark}
\newtheorem{remark}[theorem]{Remark}

\theoremstyle{definition}
\newtheorem{definition}[theorem]{Definition}
\newtheorem{example}[theorem]{Example}
\newtheorem{notation}[theorem]{Notation}
\newtheorem*{claim}{Claim}

\newcommand{\Acal}{\mathcal{A}}
\newcommand{\Bcal}{\mathcal{B}}
\newcommand{\Mcal}{\mathcal{M}}
\newcommand{\Ecal}{\mathcal{E}}
\newcommand{\Ocal}{\mathcal{O}}
\newcommand{\Pcal}{\mathcal{P}}

\newcommand{\Ccal}{\mathcal{C}}
\newcommand{\Scal}{\mathcal{S}}

\newcommand{\Rcal}{\mathcal{R}}

\newcommand{\PP}{\mathbb{P}}

\newcommand{\CC}{\mathbb{C}}

\newcommand{\ZZ}{\mathbb{Z}}
\newcommand{\QQ}{\mathbb{Q}}
\newcommand{\EE}{\mathbb{E}}

\newcommand{\VV}{\mathbb{V}}
\newcommand{\HH}{\mathbb{H}}

\newcommand{\Sfrak}{\mathfrak{S}}

\newcommand{\xbf}{\mathbf{x}}
\newcommand{\pbf}{\mathbf{p}}

\newcommand{\rarr}{\rightarrow}

\newcommand{\mg}{\Mcal_g}

\newcommand{\rg}{\Rcal_g}
\newcommand{\rgbar}{\overline{\Rcal}_g}
\newcommand{\rgm}{\Rcal_{g;m}}
\newcommand{\rgmbar}{\overline{\Rcal}_{g;m}}
\newcommand{\rgr}{{\Rcal}_{g, 2r}}
\newcommand{\rgrbar}{\overline{\Rcal}_{g, 2r}}
\newcommand{\rgrm}{\Rcal_{g, 2r;m}}
\newcommand{\rgrmbar}{\overline{\Rcal}_{g, 2r;m}}

\newcommand{\mgn}{\Mcal_{g, n}}

\newcommand{\ram}{\mathrm{ram}}
\newcommand{\tr}{\mathrm{tr}}

\newcommand{\ntr}{\mathrm{ntr}}
\newcommand{\exc}{\mathrm{exc}}

\newcommand{\st}{\mathrm{st}}
\newcommand{\sing}{\mathrm{sing}}
\newcommand{\CH}{\mathrm{CH}}
\renewcommand{\H}{\mathrm{H}}

\renewcommand{\Im}{\mathrm{Im}}

\newcommand{\Pic}{\mathrm{Pic}}

\newcommand{\Aut}{\mathrm{Aut}}

\newcommand{\Id}{\mathrm{Id}}

\newcommand{\GL}{\mathrm{GL}}
\newcommand{\SL}{\mathrm{SL}}
\newcommand{\Sym}{\mathrm{Sym}}

\newcommand{\R}{\mathrm{R}}
\newcommand{\Gal}{\mathrm{Gal}}
\newcommand{\Adm}{\mathrm{Adm}}
\newcommand{\rt}{\mathrm{rt}}
\newcommand{\mc}{\mathcal}
\newcommand{\ra}{\rightarrow}

\newcommand{\ol}{\overline}
\newcommand{\smallcoprod}{\mathbin{\vcenter{\hbox{$\scriptstyle\coprod$}}}}

\begin{document}

\author{Bogdan Carasca \& Riccardo Redigolo}
\address{\vskip 1.5em Humboldt Universität zu Berlin, Unter den Linden 6, 10117 Berlin}
\email{bogdanpetru.carasca@gmail.com}

\address{Humboldt Universität zu Berlin, Unter den Linden 6, 10117 Berlin}
\email{riccardo.redigolo@hu-berlin.de}

\date{}

\begin{abstract}
    We denote by $\Rcal_{g;m}$ the moduli space of $m$--pointed Prym curves of genus $g$, that is, tuples $[\widetilde C / C; x_1, \dots, x_m]$ where $[C, x_1, \dots, x_m]$ is an $m$--pointed genus $g$ curve and $\widetilde C/ C$ is an étale double cover of $C$. In this paper, we address the problem of the non--tautology of the Chow ring of $\Rcal_{g;m}$. The locus which allows us to achieve earlier bounds for this when compared to $\mg$ is the component $\Rcal\Bcal_g^0$ of the locus of bi--elliptic Prym curves. This component parametrises covers $[\widetilde C/ C]$ such that, if $C \rarr E$ is the bi--elliptic structure, the composition $\widetilde C \rarr E$ factors through an elliptic cover of $E$. Our main contribution is thus the non--tautology of the class $[\Rcal\Bcal_g^0] \in \CH^\bullet(\Rcal_g)$ when $g \ge 8$ and $g$ is even. In the course of establishing this theorem, a similar result for the compact moduli spaces $\overline{\Rcal}_{g; 2m}$ for $g + m \ge 8$ is proven.
\end{abstract}

\title[Non--tautological cycles on Prym moduli spaces] {Non--tautological cycles on Prym moduli spaces}
\maketitle

\let\thefootnote\relax

\section{Introduction}

In the landmark paper \cite{mumford1983towardEnumerativeGeometryMg}, Mumford raised for the first time the question whether the Chow ring of $\mg$ could be generated by tautological classes. Recall that the tautological ring $\R^\bullet(\mg)$ is the $\QQ$--subalgebra of $\CH^\bullet(\mg)$ generated by the $\kappa$--classes $\kappa_i := f_*(c_i(\omega_f)^{i + 1})$, where $f : \Ccal_g \rarr \mg$ is the universal curve over $\mg$ and $\omega_f$ is the relative dualising sheaf of $f$.

\vskip 0.5em
Since then, significant strides towards a better understanding of the Chow rings $\CH^\bullet(\mg)$, for low $g$, have been made:
\begin{itemize}[label=--]
    \item Mumford determines the Chow ring $\CH^\bullet(\overline\Mcal_2)$ in \cite{mumford1983towardEnumerativeGeometryMg}.
    \item Faber determines the Chow rings $\CH^\bullet(\overline\Mcal_3)$ and $\CH^\bullet(\Mcal_4)$ in \cite{faber199ChowRingM3} and \cite{faber1990chowM4}.
    \item Izadi determines the Chow ring $\CH^\bullet(\Mcal_5)$ in \cite{izadi1995chowM5}.
    \item Vakil and Penev determine $\CH^\bullet(\Mcal_6)$ in \cite{vakil2015chowRingM6}.
    \item Canning and H. Larson determine $\CH^\bullet(\Mcal_g)$ for $g = 7, 8, 9$ in \cite{larson2024chowMg}.
\end{itemize}
In all the cases above, the Chow ring is generated by tautological classes and, using work by Faber in \cite{faber1999conjDescTautMg}, the ideal of relations is fully determined.

\vskip 0.5em
The natural question is whether this pattern comes to an end as the genus $g$ increases. The answer, as expected, is affirmative. This expectation is confirmed for the first time in \cite{pandhairpande2003nontaut} where, for $g = 2(2h + 1)$, the Chow ring $\CH^\bullet(\mg)$ is shown to be non--tautological for $h$ large enough. Furthermore, van Zelm showed in \cite{zelm2018nontautBiell} that the class of the bi--elliptic locus in $\Mcal_{12}$ is non--tautological. By \cite{larson2024chowMg} and \cite{canning2025biellGen11}, this is the first instance of $[\Bcal_{g}]\notin \R^\bullet(\mg)$.

\vskip 0.5em
Another moduli space whose Chow ring has been heavily investigated in recent years is $\Acal_g$, the moduli space of principally polarised abelian varieties of dimension $g$. In \cite{pandharipande2025nontautA6} and building on methods from \cite{pandharipande2025tautProj}, they show that the class of the product locus $[\Acal_1 \times \Acal_5]$ does not lie in the tautological ring $\R^\bullet(\Acal_6)$. Recall from \cite{geerTautAg1999} that $\R^\bullet(\Acal_g)$ is the $\QQ$--subalgebra of $\CH^\bullet(\Acal_g)$ generated by the classes $\lambda_i = c_i(\EE_g)$ where $\EE_g \rarr \Acal_g$ is the Hodge bundle.

\vskip 0.5em
In this work, we focus on the Prym moduli spaces $\rg$ which parametrise \'etale double covers of curves of genus $g$. These moduli spaces were classically used to uniformise moduli of abelian varieties via the Prym map $\Pcal_g : \Rcal_g \rarr \Acal_{g - 1}$. The morphism $\Pcal_g$ is particularly effective in genus $g \le 6$, when it is dominant. Studying the Chow ring of $\rg$, as well as the Chow ring of moduli spaces of pointed Prym curves $\rgm:= \Rcal_g\times_{\mg}\Mcal_{g, m}$, is therefore very natural. Our results are the following.


\begin{theorem}\label{thm: classes RBbar are non-taut}
    Assume $g \ge 2$ and $g + m \ge 8$. The even cohomology of $\overline\Rcal_{g; 2m}$ contains non--tautological algebraic cycles.
\end{theorem}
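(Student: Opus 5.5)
We adapt the strategy of Graber--Pandharipande and van Zelm for $\overline\Mcal_{g,n}$: we exhibit an algebraic class whose pullback along a suitable gluing map has a K\"unneth decomposition with odd cohomology components. Such a decomposition cannot occur for tautological classes, because the pullback of a tautological class under a boundary gluing map has only tautological, hence even, K\"unneth components. The natural candidate is the closure $\overline{\Rcal\Bcal}{}^0_{g;2m}$ of the pointed version of the bi--elliptic Prym locus. Concretely, we take pairs $[\widetilde C/C; x_1,\dots,x_{2m}]$ with $C\to E$ bi--elliptic, $\widetilde C\to E$ factoring through an elliptic (isogenous) double cover $\widetilde E\to E$, and marked points forming $m$ conjugate pairs under the bi--elliptic involution. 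This is the analogue of the locus used by van Zelm, where $g+m\geq 8$ matches his bound $g+m\ge 12$ shifted by the Prym data. The first step is to set up the Prym analogue of the tautological ring $\R^\bullet(\rgmbar)$, as pushforwards of $\psi$, $\kappa$ classes from boundary strata of admissible double covers. We then check the compatibility we need: pulling back a tautological class along a gluing map $\xi:\overline\Rcal_{g_1;n_1+1}\times\overline\Rcal_{g_2;n_2+1}\to\rgmbar$ gives a sum of products of tautological classes. For a node where the cover is trivial, the gluing involves two nodes above it, and we must handle this as well. The proof follows Graber--Pandharipande's excess intersection argument on the level of admissible covers.

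The second step is to choose the gluing map. We glue a genus $1$ piece to the rest, with the double cover over the elliptic component being the \emph{nontrivial} étale one, or a suitable ramified degeneration in the admissible-cover compactification. The aim is for the pullback of $[\overline{\Rcal\Bcal}{}^0]$ to contain, as a component, the class of the diagonal-type locus $\{(E,p),(E,q)\}$ in $\overline\Mcal_{1,k}\times\overline\Mcal_{1,k}$, or its level-2 version $\overline\Rcal_{1;k}\times\overline\Rcal_{1;k}$. We degenerate $C\to E$ so that $C$ splits into two components each mapping to the same elliptic curve. Then the K\"unneth decomposition involves $\H^{11,0}$-type classes from cusp forms of weight 12 on $\overline\Mcal_{1,11}$, or their analogues for $\Gamma_1(2)$/$\Gamma(2)$ level structure. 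The level structure allows a smaller number of points, which explains the lower bound $g+m\ge 8$. To make this work we count dimensions so that the relevant odd cohomology $\H^{k,0}(\overline\Rcal_{1;k})$ is nonzero. This uses Eichler--Shimura for the relevant congruence subgroup, where weight $k+1$ cusp forms exist once $k$ is large enough, and we check that $k$ is reached exactly when $g+m\ge 8$.

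The third step, which I expect to be the main obstacle, is the actual computation of $\xi^*[\overline{\Rcal\Bcal}{}^0_{g;2m}]$. The components of $\xi^{-1}$ are loci of admissible bi--elliptic double-double covers over the glued curve, and many of them have excess dimension or multiplicities from automorphisms and from the choice of $\eta$. We would isolate the single component supported on the diagonal of $\overline\Rcal_{1;k}\times\overline\Rcal_{1;k}$ and show it contributes a nonzero multiple of the diagonal class, whose projection to $\H^{k,0}\otimes\H^{0,k}$ is nonzero. We would show the remaining components are products of loci pulled back from each factor, or have the wrong dimension, so that they contribute only even-degree components in each factor. Proving this requires a careful classification of the degenerations of the tower $\widetilde C\to C\to E$ together with $\widetilde E\to E$.

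Finally, when $g+m\geq 8$ but the inductive setup needs more points, we reduce using forgetful and gluing maps that preserve tautological classes. This propagates non-tautology from the base cases upward.
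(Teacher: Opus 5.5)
Your architecture matches the paper's: the same locus $\overline{\Rcal\Bcal}^0_{g,0,2m}$, the same gluing $\overline\Rcal'_{1;k}\times\overline\Rcal'_{1;k}\to\overline\Rcal_{g;2m}$ of two genus--one Prym curves along $g-1$ of their $k=g-1+m$ markings (the other $m$ on each side become the conjugate pairs), and the same contradiction with the $\H^{k,0}\otimes\H^{0,k}$ part of the diagonal. The gap is your claim that the needed cusp forms exist ``exactly when $g+m\ge 8$''. That claim is false, because the level is forced: $\Rcal_{1;1}\cong\HH/\Gamma_1(2)$, and $-\Id\in\Gamma_1(2)$ (as it also lies in $\Gamma(2)$). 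So there are no cusp forms of odd weight; equivalently $\H^\bullet(\Rcal_{1;1},\VV_j)=0$ for odd $j$. Hence $\H^{k,0}(\overline\Rcal_{1;k})=0$ for even $k$, i.e.\ whenever $g+m$ is odd, and the diagonal argument gives nothing there. It needs $k$ odd and $k\ge 7$, and the paper applies it only to $g+m=8$, $k=7$. The odd cases therefore require explicit lifting lemmas, which your last paragraph does not supply. This includes $(g,0)$ with $g\ge 9$ odd, where there are no markings at all. The paper uses three:
(a) Forgetting a marking fixed by the bi--elliptic involution is finite on $\overline{\Rcal\Bcal}^0_{g,n+1,2m}$, since there are at most $2g-2$ fixed points. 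Pushforward then passes non--tautology from $n$ to $n+1$ fixed markings.
(b) Gluing $\overline\Mcal_{0,3}$ at a fixed marking $x_1$ turns it into a conjugate pair, and the pullback of $[\overline{\Rcal\Bcal}^0_{g,n,2m+2}]$ is a positive multiple of $[\overline{\Rcal\Bcal}^0_{g,n+1,2m}]$. This works because $\widetilde C/\iota_1\to E$ is an isogeny, hence \'etale. So $\iota_1$ fixes $x_1^\pm$ and extends over the new rational tails.
(c) The elliptic--tail gluing $\overline\Rcal_{g;1}\times\overline\Mcal_{1,1}\to\overline\Rcal_{g+1}$. Here you must show that the extra hyperelliptic component in van Zelm's pullback formula misses $\overline{\Rcal\Bcal}^0_{g+1}$: none of the three possible covers admits an involution with genus--one quotient. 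Then the pullback is $\alpha[\overline{\Rcal\Bcal}^0_{g,1,0}\times\overline\Mcal_{1,1}]$ and K\"unneth applies.
With these, start from $g+m=8$, add pairs via (a) and (b), and raise the genus from $(8,0)$ via (a) and (c).

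Your third step also targets the wrong thing. Saying the remaining components are products or have ``the wrong dimension'' does not control them, because an excess component need not have even K\"unneth components. What the paper proves is that over the interior $\Rcal'_{1;k}\times\Rcal'_{1;k}$ the preimage is set--theoretically just the diagonal. The bi--elliptic involution of $E_1\cup E_2$ swaps the two components and fixes all nodes. The second involution $\iota_1$ can neither preserve $\widetilde E_1$ and $\widetilde E_2$ nor move a node, since either would force $g\bigl((\widetilde E_1\cup\widetilde E_2)/\iota_1\bigr)\ge 2$. Because $\Delta$ has the expected codimension $k$, the pullback equals $\alpha[\Delta]$ on the interior. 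By excision the remainder is supported on the boundary, where it has no $\H^{k,0}\otimes\H^{0,k}$ component. For $k=7$ this follows from the paper's genus--one results; for any $k$ it follows from the fact that holomorphic $k$--forms on $\overline\Rcal_{1;k}$ restrict injectively to the interior.
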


Over the open locus $\Rcal_{g;2m} \subset \ol{\Rcal}_{g;2m}$, the following holds.
\begin{theorem}\label{thm: main non--tautology theorem}
    Assume $g \ge 2$ and $g + m\ge 8$ to be even. The even cohomology of $\Rcal_{g;2m}$ contains non--tautological algebraic cycles.
\end{theorem}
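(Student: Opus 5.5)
The plan follows the strategy of Graber--Pandharipande and van Zelm. We find an algebraic class $Z$ on $\Rcal_{g;2m}$ whose pullback under a boundary gluing map has a nonzero odd-cohomology Künneth component. Tautological classes pull back to tautological classes on the boundary strata, and these have only even (tautological) Künneth components. Hence such a $Z$ cannot be tautological.

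For $Z$ we take the closure in $\overline\Rcal_{g;2m}$ of the pointed version of the bi--elliptic Prym locus $\Rcal\Bcal^0_g$. This is the locus of covers $\widetilde C/C$ with a double cover $C\to E$ to an elliptic curve, such that $\widetilde C\to E$ factors through an isogeny $\widetilde E\to E$. The $2m$ marked points are taken as $m$ pairs of points conjugate under the bi--elliptic involution.

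\textbf{Step 1 (boundary pullback).} Consider the gluing map
\[
\xi\colon \overline\Rcal_{1;g+m-1}\times\overline\Rcal_{1;g+m-1}\longrightarrow \overline\Rcal_{g;2m}
\]
that glues two pointed elliptic Prym curves along matched pairs of points. This is the Prym analogue of the map $\overline\Mcal_{1,11}\times\overline\Mcal_{1,11}\to\overline\Mcal_{12}$ used by van Zelm, with the points distributed so that the total genus is $g$ and $2m$ markings remain. We choose the exact combinatorics so that the number of points on each elliptic side is $g+m-1\ge 7$ when $g+m\ge 8$. This is precisely what is needed for $\H^{11,0}(\overline\Mcal_{1,11})$, the space generated by the weight 12 cusp form $\Delta$, to appear after forgetting points. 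In the Prym setting we must also track the $2$--torsion data. The relevant component is the one where the cover is trivial on one side, or is given by a fixed nonzero $2$--torsion point, so that each factor maps finitely onto $\overline\Mcal_{1,n}$.

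\textbf{Step 2 (identify the components).} We compute $\xi^*[\overline{Z}]$ via the admissible covers description. Its components are products of loci where both elliptic curves double cover the same $E$ and the points are paired by the involution. Equivalently, these are graphs of isomorphisms or isogenies, twisted by the diagonal $\Delta_E\subset E\times E$ on the pointed level. We then show that the Künneth decomposition of this class contains the summand $\sum \omega_i\otimes\bar\omega_i$, where the $\omega_i$ range over the odd classes coming from $\H^{11}(\overline\Mcal_{1,11})$ pulled back along forgetful maps. Its coefficient is a nonzero multiple of the excess degree.

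\textbf{Step 3 (conclude the compact case).} By Petersen's results and those of Graber--Pandharipande, pullbacks of tautological classes under $\xi$ lie in $\R^\bullet\otimes\R^\bullet\subset\H^{\mathrm{even}}\otimes\H^{\mathrm{even}}$. This uses that the Prym tautological ring maps to the tautological rings of the factors. Together with Step 2, this proves Theorem~\ref{thm: classes RBbar are non-taut}.

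\textbf{Step 4 (open case).} For the open statement we restrict to $\Rcal_{g;2m}$ and must show that $Z$ is not tautological there. Suppose $Z|_{\Rcal_{g;2m}}$ were tautological. Then $\overline Z$ would differ from a tautological class by classes pushed forward from the boundary. We would then use the parity assumption on $g+m$, as in van Zelm's argument for $g=12$. The idea is to choose the gluing so that boundary pushforwards pulled back under $\xi$ contribute no $\H^{11}\otimes\H^{11}$ term. Concretely, we check that boundary strata intersecting the image of $\xi$ have elliptic components carrying too few points, or carry the wrong odd-cohomology degree, to support $\Delta\otimes\Delta$. This excess-intersection bookkeeping over the boundary of the Prym moduli space is the main obstacle. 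I expect the evenness hypothesis to be exactly what rules out the problematic strata, namely those with an unbalanced distribution of genus and points.
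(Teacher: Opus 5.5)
Your overall architecture is the right one: glue two elliptic Prym curves, identify the pullback of the bi--elliptic class with a diagonal, and detect an odd Künneth summand. But the cohomological input in Steps 1--2 cannot work in the range of the theorem. The factors are $\overline{\Rcal}_{1;k}$ with $k=g+m-1$, so for $g+m\in\{8,10\}$ they have dimension $7$ or $9$.

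\begin{itemize}
\item No class from $\H^{11,0}(\overline{\Mcal}_{1,11})$ can live there. Forgetful maps only lower the number of points, and a variety of dimension $k<11$ has no cohomology of type $(11,0)$.
\item More generally, restricting to a component where the Prym data is trivial or rigid, so that every odd class is pulled back from $\overline{\Mcal}_{1,n}$, can at best recover the range $g+m\ge 12$ familiar from $\Mcal_g$. It discards exactly what makes the Prym case better.
\end{itemize}

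The missing idea is that $\Rcal_{1;1}\cong\HH/\Gamma_1(2)$, and $\Gamma_1(2)$ already has a cusp form in weight $8$; indeed $\dim\Scal_w(\Gamma_1(2))=\lfloor w/4\rfloor-1$ for even $w\ge 8$. Via Eichler--Shimura and the Leray spectral sequence of $\Rcal^{\rt}_{1;k}\to\Rcal_{1;1}$, this gives $\H^{k,0}(\overline{\Rcal}_{1;k})\neq 0$ for every odd $k\ge 7$. These classes do not come from $\overline{\Mcal}_{1,k}$.

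Step 2 should then be the following. Glue two elliptic curves carrying non-trivial covers $\widetilde E_i\to E_i$ along $g-1$ pairs of points, and the covers along the preimages. Then show that the pullback of $[\overline{\Rcal\Bcal}^0_{g,0,2m}]$, restricted to $\Rcal'_{1;k}\times\Rcal'_{1;k}$, is a positive multiple of the diagonal. No isogeny graphs appear: the second involution defining $\Rcal\Bcal^0$ is forced to swap $\widetilde E_1$ and $\widetilde E_2$ and to fix every node. The diagonal then has a non-zero $\H^{k,0}\otimes\H^{0,k}$ summand. No class with tautological Künneth decomposition can have such a summand, since all its components are of type $(p,p)\otimes(q,q)$.

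This also shows that you have misidentified the role of parity. The hypothesis that $g+m$ is even is there precisely to make $k=g+m-1$ odd. Since $-\Id\in\Gamma_1(2)$, there are no non-zero odd-weight cusp forms, so $\H^{k,0}(\overline{\Rcal}_{1;k})=0$ for even $k$. Parity plays no role in Step 4, which in your proposal is an expectation rather than an argument.

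The argument you need runs as follows. If the open class were tautological, then $[\overline{\Rcal\Bcal}^0_{g,0,2m}]+\sum_i[B_i]$ would be tautological for some cycles $B_i$ supported on the boundary. The pullback of $\sum_i[B_i]$ would then need a non-zero $\H^{k,0}\otimes\H^{0,k}$ summand. What rules this out:

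\begin{itemize}
\item Every non-zero holomorphic $k$-form on $\overline{\Rcal}'_{1;k}$ restricts non-trivially to $\Rcal'_{1;k}$. Hence any $B_i$ whose pullback is supported on $\partial(\Rcal'_{1;k}\times\Rcal'_{1;k})$ contributes nothing in $\H^{k,0}\otimes\H^{0,k}$. This disposes of every boundary divisor avoided by the image of $\Rcal'_{1;k}\times\Rcal'_{1;k}$.
\item For $B_i$ inside the non-separating \'etale divisor $\Delta_0'$, which contains that image, factor the gluing map through the clutching map and use excess intersection. The extra factor is the first Chern class of a normal bundle, which is tautological and hence of type $(1,1)$. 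Multiplying by it cannot produce a $(k,0)\otimes(0,k)$ component.
\end{itemize}
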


Theorem \ref{thm: classes RBbar are non-taut} and Theorem \ref{thm: main non--tautology theorem} showcase how differently $\rg$ behaves compared to $\mg$. Particularly worth mentioning are the cases $g = 8$ and $g = 10$ where the class of the bi--elliptic locus $\Bcal_g \subset \mg$ vanishes by \cite{larson2024chowMg}. Moreover, the new found bounds in the pointed case $\rgmbar$ are also lower compared to $\Mcal_{g, m}$. Recall that, from \cite{canning2022mgStable} and the references therein, $\overline{\Mcal}_{g, m}$ has non--tautological even cohomology as depicted in the table below:
\begin{table}[ht]
\centering 
\renewcommand{\arraystretch}{1.3}

\begin{tabular}{|c|c|c|c|c|c|c|}
\hline
The genus $g$ & 1 & 2 & 3 & 4 & 5 & 6 \\ \hline
$m \ge m_1(g)$ for $\overline\Mcal_{g,m}$ & $m \ge 11$ & $m \ge 20$ & $m \ge 18$ & $m \ge 16$ & $m \ge 14$ & $m \ge 12$\\ \hline
$m \ge m_2(g)$ for $\rgmbar$ & $m \ge 7$ & $m \ge 12$ & $m \ge 10$ & $m \ge 8$ & $m \ge 6$ & $m \ge 4$ \\ \hline
\end{tabular}

\vskip 0.5em
\caption{$\overline{\Mcal}_{g, m}$ has $\CH^\bullet \ne \R^\bullet$ for $m \ge m_1(g)$, $\rgmbar$ has $\CH^\bullet \ne \R^\bullet$ for $m \ge m_2(g)$}
\label{tab:my_table} 
\end{table}

Let us now provide some considerations which give context to our theorems. From \cite{farkasLudwig2010prym}, \cite{farkasVerraThetaNikulin}, \cite{farkasVerraR9}, and \cite{brunsR15}, we now know that $\rg$ is unirational for $g \le 7$, uniruled for $g \le 9$, and of general type for $g \ge 13$, $g \ne 16$. Seeing that the birational complexity of $\rg$ increases with the genus, we expect the Chow ring of $\rg$ to become more complicated. In a way, this paper confirms this heuristic reasoning. We also point out that, despite the fact that $\Rcal_8$ is uniruled, the Chow ring $\CH^\bullet(\Rcal_8)$ is not tautological.

\vskip 0.5em
Another aspect of the geometry of $\rg$ which motivated our work is the following observation. It often happens that the preimage $u^{-1}(Z) \subset \rg$ of a subvariety $Z\subset \mg$ under the forgetful map $u : \rg \rarr \mg$ is reducible. For example, this is the case when $Z$ is the hyperlliptic or the bi--elliptic locus in $\mg$. This last locus is instrumental in the proofs of Theorem \ref{thm: classes RBbar are non-taut} and Theorem \ref{thm: main non--tautology theorem}. For more on the geometry of bi--elliptic Prym curves, see \cite{naranjo1992biellPrym} or \cite{debarre1988duke}. Since, as we have mentioned, the bi--elliptic locus in $\Mcal_{12}$ is non--tautological, any component of $u^{-1}(\Bcal_{12})$ has non--tautological class in $\CH^\bullet(\Rcal_{12})$. The question now is whether this could happen at an earlier stage. As it is transparent from the proofs of Theorem \ref{thm: classes RBbar are non-taut} and Theorem \ref{thm: main non--tautology theorem}, this strategy is indeed successful.

\subsection{Structure of the paper}

The paper is organised as follows. Section \ref{section: preliminaries} discusses some basic notions and results foundational to the proof of Theorem \ref{thm: classes RBbar are non-taut} and Theorem \ref{thm: main non--tautology theorem}. The main focus of this section are the boundary strata of $\rgmbar$. Section \ref{section: cohomology R1m} is dedicated to the cohomology of $\overline\Rcal_{1;m}$. Finally, in Section \ref{section: proof of thm 11}, the results gathered so far are used to prove our main theorems.

\subsection{Notation}\label{subsection: notation}

Here is some notation used throughout this work: 
\begin{itemize}
    \item $\rgm$: The fibre product $\rg \times_{\mg} \Mcal_{g;m}$.
    \item $\rgrm$: The moduli space of tuples $[\widetilde C / C, (p_i)_{i = 1}^{2r}; (x_j)_{j = 1}^m]$ such that $\widetilde C / C$ is a double cover with branch divisor $p_1 + \dots + p_{2r}$.
    \item $\rgrmbar$: The Cornalba compactification of $\rgrm$.
    \item $\Bcal_{g, n, 2m}$: The locus in $\Mcal_{g, n+2m}$ of bi--elliptic curves $[C, (x_i)_{i = 1}^n, (y_i)_{i = 1}^{2m}]$ such that the bi--elliptic involution fixes $(x_i)_{i = 1}^n$ and sends $y_{2i - 1} \mapsto y_{2i}$ for all $1 \le i \le m$.
    \item $\Rcal\Bcal_g$: The bi--elliptic locus in $\rg$.
    \item $\{\Rcal\Bcal_g^t\colon t= 0, \dots, \lfloor \frac{g - 1}{2}\rfloor\} \cup\{\Rcal\Bcal_g'\}$: The irreducible components of the bi--elliptic locus in $\rg$.
    \item $\Rcal\Bcal^t_{g;m}$ and $\Rcal\Bcal_{g;m}'$: The pullbacks of $\Rcal\Bcal_g^t$ and $\Rcal\Bcal_g'$ to $\rgm$ under the forgetful map $\rgm \rarr \rg$.
    \item $\Rcal\Bcal_{g, n, 2m}^0$: The pre--image of $\Bcal_{g, n, 2m}$ in $\Rcal\Bcal^0_{g;n + 2m}$ under the map $\Rcal\Bcal_{g; n + 2m}^0 \rarr \Mcal_{g, n + 2m}$.
    \item $\overline\Adm(g, h)_{2m}$: The moduli space of admissible double covers $[f : C \rarr D, y_1, \dots, y_{2m}]$ such that $g(C) = g$ and $g(D)= h$, and the involution sends $y_{2i - 1} \mapsto y_{2i}$.
\end{itemize}

\subsection*{Acknowledgements}

B.C. was partially supported by the Deutsche Forschungsgemeinschaft (DFG, German Research
    Foundation) under Germany's Excellence Strategy – The Berlin Mathematics
    Research Center MATH+ (EXC-2046/1, EXC-2046/2, project ID: 390685689).

\vskip 0.5em
R.R. was supported by the DFG Research Training Group 2965 ``From geometry to numbers" project number 512730679 involving Humboldt-Universität zu Berlin and Leibniz Universität Hannover.

\vskip 0.5 em
We thank our advisor Gavril Farkas for his guidance and support. We also thank Marian Aprodu, Samir Canning, and Vlad Robu for helpful conversations.

\section{Preliminaries}\label{section: preliminaries}

This section gathers some generalities on the compactified Prym moduli spaces $\rgbar$ and pointed variants of these $\overline{\Rcal}_{g, 2r;m}$. The main scope is a to provide a combinatorial description of the boundary $\partial\Rcal_{g, 2r;m}$ analogous to $\partial\mgn$.

\subsection{Generalities on $\rgrmbar$}

In this subsection, we set some notations and basic definitions regarding the moduli stacks $\rgm$ and their natural Cornalba compactification $\rgmbar$. For further details, see \cite{cornalba1989theta} and \cite{farkasLudwig2010prym}.
\begin{definition}
    A \textit{quasi--stable $m$--pointed curve} is given by $[X, x_1, \dots, x_m]$ where $X$ is obtained by blowing up once some of the nodes of a stable $m$--pointed curve. The resulting rational components over the blown up nodes are called \textit{exceptional components}.
\end{definition}

\begin{definition}
    A \textit{stable $(2r, m)$--pointed Prym curve} is the data of $[X, (p_i)_{i = 1}^{2r}; (x_j)_{j = 1}^m, \eta, \beta]$ where $[X, (p_i)_{i = 1}^{2r}, (x_j)_{j=1}^m]$ is a quasi--stable $(2r+m)$--pointed curve, $\eta \in \Pic(X)$ is a line bundle of total degree $-r$ such that $\eta|_R = \Ocal_R(1)$ for every exceptional component $R$ of $X$, and ${\beta : \eta^{\otimes 2} \rarr \Ocal_X(-p_1 - \dots - p_{2r})}$ is a homomorphism which is generically non--zero on non--exceptional components.
\end{definition}
We denote by $\rgrmbar$ the moduli stack of stable $(2r, m)$--pointed Prym curves, and by $\rgrm \subset \rgrmbar$ the locus parametrising smooth curves. By \cite{budRg2}, the stacks $\rgrmbar$ are all irreducible. We use the notation $\rgrbar := \overline{\Rcal}_{g, 2r; 0}$ and $\rgmbar:= \overline{\Rcal}_{g, 0;m}$, and a similar convention is maintained for $\rgr$ and $\rgm$.

\begin{remark}
    If $[X, (p_i)_{i = 1}^{2r}; (x_j)_{j = 1}^m, \eta]$ is a stable $(2r, m)$--pointed Prym curve, we have that for any non--exceptional component $C$ of $X$, $\eta|_C^{\otimes 2} \cong \Ocal_C\left(-\sum_{p_i \in C} p_i\right)$. This square root is equivalent to a ramified double cover of $C$, branched at the points $\{p_i\}_{i = 1}^{2r} \cap C$. This observation relates Cornalba's perspective on $\rgrmbar$ to Beauville's in \cite{beavilleSchottky} which uses admissible covers. This dictionary is explained very well in \cite{farkasLudwig2010prym}. We will often switch perspectives between these two standpoints.
\end{remark}

\begin{definition}
    We let $\rgrm'$ denote the moduli stack of tuples $[\widetilde C/C, (p_i)_{i = 1}^{2r}; (x_j)_{j = 1}^m, (x_j^\pm)_{j = 1}^m]$ where $[\widetilde C/C, (p_i)_{i = 1}^{2r}; (x_j)_{j = 1}^m] \in \rgrm$ and $\{x_j^\pm\}$ is the fibre of $\widetilde C \rarr C$ over $x_j$. We denote by $\overline\Rcal_{g, 2r;m}'$ the Cornalba compactification of $\Rcal_{g, 2r;m}'$. We point out that, in $\Rcal_{g, 2r;m}'$, the ordering of the fibres of $\widetilde C \rarr C$ over the $x_j$'s is part of the data.
\end{definition}

The irreducible components of the Prym bi--elliptic locus $\Rcal\Bcal_g \subset \rg$ have been described in \cite{naranjo1992biellPrym} and \cite{debarre1988duke} as follows. Let $[\widetilde C / C] \in \Rcal\Bcal_g$ with bi--elliptic structure $f : C \rarr E$. There are two possibilities for $\Gal(\widetilde C / E)$: either $\Gal(\widetilde C / E) = \ZZ/2$ or $\Gal(\widetilde C/ E) = \ZZ/2 \times \ZZ/2$. The component $\Rcal\Bcal_g'$ is prescribed by the condition $\Gal(\widetilde C/E) = \ZZ/2$.  For $0 \le t \le \lfloor \frac{g - 1}{2}\rfloor$, the component $\Rcal\Bcal_g^t$ is given by $g(C_1) = t + 1$ and $g(C_2) = g - t$ where $C_1$ and $C_2$ are the two other intermediate subcovers. The preimage of the component $\Rcal\Bcal_g^t$ (respectively $\Rcal\Bcal_g'$) inside $\Rcal_{g;m}$ under the forgetful map $\rgm \rarr \rg$ is denoted by $\Rcal\Bcal^t_{g;m}$ (respectively $\Rcal\Bcal_{g;m}'$). 

\vskip 0.5em
We let $\Rcal\Bcal_{g, n, 2m}^t$ denote the locus in $\Rcal_{g; n + 2m}$ of $[\widetilde C/ C, (x_i)_{i = 1}^n, (y_j)_{j = 1}^{2m}]$ such that $[\widetilde C/C] \in \Rcal\Bcal_g^t$, the bi--elliptic involution fixes the $x_i$, and sends $y_{2j -1}$ to $y_{2j}$.

\subsection{Pullbacks of Boundary Strata}

\vskip 0.5 em
The goal of this subsection will be to prove that pullbacks of tautological classes of $\rgmbar$ by the Prym glueing and clutching morphisms admit a tautological K\"unneth decomposition.
As in \cite[Appendix A]{pandhairpande2003nontaut}, we begin by noticing that the boundary strata of $\ol{\mc R}_{g;m}$ can be described explicitly in terms of combinatorial data.
\begin{definition}
    A \textit{weighted graph} is a tuple $G:=(V(G),\ H(G),\ E(G),\ L(G), g, \iota,a)$ where
	\begin{enumerate}[label=$\roman*)$]
		\item $V(G)$ is a vertex set equipped with a genus function $g:V(G)\ra \mathbb Z_{\geq 0}$;
		\item $H(G)$ is a half--edge set equipped with an involution $\iota$ and a vertex assignement \[a:H(G)\ra V(G);\] 
		\item $L(G)$ is a set of legs identified with the fixed points of $\iota$;
		\item $E(G)$ is a set of edges identified with $\left(H(G)\backslash L(G)\right)/\iota$	
	\end{enumerate}
\end{definition}

Given a graph $G$ and a vertex $v$ we denote its \textit{valence} by $n(v):=\#a^{-1}v$. 
\begin{definition}
	A \textit{finite harmonic morphism} of degree $d$ between two weighted graphs $G'$ and $G$ is the data of $\phi = (\phi_V, \phi_H, \{n_e\}_{e \in E(G)}): G' \ra G$ satisfying the following properties.
	\begin{enumerate}[label=$\roman*)$]
		\item $\phi_V:V(G')\ra V(G)$ is surjective.
		\item $\phi_H:H(G')\ra H(G)$ is surjective and makes the following diagrams commute:
		\[
		\begin{tikzcd}
			H(G') \arrow[d, "\phi_H"] \arrow[r, "\iota"] & H(G') \arrow[d,"\phi_H"]\\
			H(G) \arrow[r, "\iota"] & H(G)\\
		\end{tikzcd}
		\qquad 
		\begin{tikzcd}
			H(G') \arrow[d, "\phi_H"] \arrow[r, "a"] & V(G') \arrow[d,"\phi_V"]\\
			H(G) \arrow[r, "a"] & V(G).\\
		\end{tikzcd}\]
		\item The following quantity 
		\[\deg(\phi):=\sum_{e'\in \phi_E^{-1}(e)} n_{e'}\]
        is equal to $d$ independently of $e\in E(G)$.
        \item For each $l\in L(G)$, $\deg(\phi) = \#\phi_H^{-1}(l)$.
		\item For each $v\in V(G)$, we have
		\[\sum_{v'\in \phi_V^{-1}(v)} \left(2g(v')-2\right)=\deg(\phi)\left(2g(v)-2\right)+\sum_{v'\in \phi_V^{-1}(v)}\sum_{h'\in a^{-1}(v')} \left(n_{[h']}-1\right).\]
	\end{enumerate}
	The morphism $\phi$ is said to be \textit{\'etale} if for each $v'\in V(G'),\ h'\in a^{-1}(v')$ we have $n_{[h']}=1$.
\end{definition}

\begin{remark}
	More geometrically, we may look at finite harmonic morphisms as the combinatorial counterparts of morphisms of degree $d$ between quasi-stable curves that, on each component, are ramified only at the nodes.
\end{remark}

Recall also that given a graph $G$ we have the following notion of stability.
\begin{definition}
	A graph $G$ is said to be \textit{semi--stable} if and only if it is connected and for each $v\in V(G)$, we have
	\[2g(v)-2+n(v)\geq 0\]
	If the inequality above is strict for each $v \in V(G)$, we say that $G$ is \textit{stable}.
\end{definition}
\begin{definition}\label{def: prym structure}
	Let $G'$ and $G$ be semi--stable graphs and let $\phi:G' \ra G$ be a finite harmonic morphism of degree $2$. We say that $\phi$ is a \textit{Prym structure on $G$} if and only if it satisfies the following properties.
	\begin{enumerate}[label=$\roman*)$]
        \item For each $v\in V(G)$ such that $2g(v)-2+n(v)=0$, $h\in a^{-1}(v)$, we have that \[2g(a(\iota(h)))-2+n(a(\iota(h)))>0.\] 
		\item For each $v\in  V(G)$ such that $2g(v)-2+n(v)>0$, \[\#\{h\in a^{-1}(v)\ |\ 2g(a(h))-2+n(a(h))=0\}\equiv 0 \Mod{2}.\]
		\item For each $h'\in H(G')$, $n_{[h']}\leq 2$ with equality holding if and only if $h'\notin L(G')$ and either $a(h')$ or $a(\iota(h'))$ satisfies $2g(\cdot)-2+n(\cdot)=0$.
	\end{enumerate} 
\end{definition}

\begin{remark}
    Notice that conditions $i),ii),iii)$ are the combinatorial counterparts of the following properties of a degree $2$ morphism between semistable curves.
    \begin{enumerate}[label=$\roman*)$]
        \item No two strictly semistable components of the base meet.
		\item Every stable component has an even number of exceptional nodes.
		\item The morphism is ramified only at the nodes meeting the exceptional components.
	\end{enumerate} 
\end{remark}
\begin{notation}
    If $\phi:G'\ra G$ is a Prym structure on $G$, we will denote by $V^{\exc}(G)$ the set of strictly semistable vertices, by $V^{\ram}(G)$ the set of stable vertices with an edge connected to an unstable vertex, by $V^{\tr}(G)$ the set of vertices with $2$ preimages and by $V^{\ntr}(G)$ the set of non--exceptional vertices with one preimage. Given $v\in V^{\ram}(G)$, we denote by $E_v^{\ram}(G)$ the set of edges connecting $v$ to unstable vertices. We will also denote $2r(v):=\#E_v^{\ram}(G)$ and $m(v):=n(v)-2r(v)$.
\end{notation}

\begin{definition}\label{def: dual harmonic map prym curve}
    We \hspace{0.5 pt} define \hspace{0.5 pt} the \hspace{0.5 pt} \textit{dual \hspace{0.5 pt} harmonic \hspace{0.5 pt} map} \hspace{0.5 pt} of \hspace{0.5 pt} a \hspace{0.5 pt} stable \hspace{0.5 pt} $m$--pointed \hspace{0.5 pt} Prym \hspace{0.5 pt} curve ${[f: \widetilde C \rarr C; (x_j)_{j = 1}^m]}$ as the map between the dual graphs of $\widetilde C$ and $C$, according to how $f$ maps irreducible components to irreducible components and special points to special points. For each $e \in E(\Gamma(\widetilde C))$, $n_e$ is given by the ramification index of $f$ at the corresponding special point.
\end{definition}

Notice that a boundary stratum of $\ol{\mc R}'_{g;m}$ naturally determines a Prym structure on a semistable graph by considering the dual harmonic map at its generic point.

\vskip 0.5em
Conversely, given a Prym structure $\phi:G'\ra G$, there exists a boundary stratum $\Delta_\phi\subset \ol{\mc R}'_{g;m}$ whose generic point has $\phi$ as the dual harmonic map. Indeed, defining for each Prym structure $\phi$, the stack
\[\ol{\mc R}'_{\phi}:=\prod_{v\in V^{ntr}(G')} \ol{\mc R}'_{g(v);n(v)}\times \prod_{v\in V^{tr}(G')} \ol{\mc M}_{g(v);n(v)}\times \prod_{v\in V^{exc}(G')} \ol{\mc R}'_{g(v),2r(v);m(v)},\]
we take $\Delta_\phi$ to be the image of the gluing map $\chi_\phi:\ol{\mc R}'_{\phi}\ra \ol{\mc R}'_{g;m}$.

\vskip 0.5 em
Slightly modifying Definition \ref{def: prym structure} to account for ramification at the $2r$ marked points, one obtains an analogous description of the boundary stratification of $\rgrmbar'$. For future use, we fix the notation for the gluing morphisms that give the boundary components of $\ol{\mc R}'_{g;m}$.

\begin{example}\label{ex: gluing and clutching maps}
    The gluing morphisms giving the boundary components of $\rgmbar'$ correspond to the following Prym structures.
    Let $0 \le i \le \lfloor\frac{g - 1}{2}\rfloor$ and $S \subset \{p_k\}_{k = 1}^{2r} \cup \{x_j\}_{j = 1}^m$ be such that $2i - 2 + \# S > 0$ and $2(g - i) - 2 + \# S^c > 0$. We let $S_\pbf := S \cap \{p_i\}_{i = 1}^{2r}$ and $S_{\xbf} := S \cap \{x_j\}_{j = 1}^m$.
    \begin{enumerate}
    \item
    Assume first $\#S_\pbf$ is even. Define the gluing map
    \[
    \overline\Rcal'_{i, S_\pbf;S_\xbf \cup \{*\}} \times \overline{\Rcal}'_{g - i, S_\pbf^c; S_\xbf^c \cup \{*\}} \rarr \rgrmbar'
    \]
    which glues the two curves at the special marked point $*$, and the two double covers along the preimages of the marked point $*$. See Figure \ref{fig: gluing 1 2}.1.
    \item Assume next that $\#S_\pbf$ is odd. Define the gluing map
    \[
    \overline\Rcal'_{i, S_{\pbf} \cup \{*\}; S_\xbf} \times \overline\Rcal'_{g - i, S_\pbf^c\cup \{\star\}; S_\xbf} \rarr \overline\Rcal_{g, 2r;m}'
    \]
    which joins the given two curves by an exceptional component at the markings $*$ and $\star$. 
    \begin{figure}[!ht]
    \hskip -2 em
		\begin{minipage}{0.225\textwidth}
			\begin{center}
				\begin{circuitikz}
        \tikzstyle{every node}=[font=\fontsize{9.1pt}{11.8pt}\selectfont]
        \draw (9.375,12.75) to[short] (10.875,12.75);
        \node [font=\fontsize{9.1pt}{11.8pt}\selectfont, inner xsep=0.080cm, inner ysep=0.085cm, rounded corners=0.020cm] at (9.125,12.5) {$g-i$};
        \node [font=\fontsize{9.1pt}{11.8pt}\selectfont, inner xsep=0.080cm, inner ysep=0.085cm, rounded corners=0.000cm] at (10.75,12.375) {$i$};
        \begin{scope}[transparency group, opacity=1]
        \node at (9.375,12.75) [circ, color={rgb,255:red,0; green,0; blue,0}] {};
        \end{scope}
        \begin{scope}[transparency group, opacity=1]
        \node at (10.875,12.75) [circ, color={rgb,255:red,0; green,0; blue,0}] {};
        \end{scope}
        \draw (9.375,12.75) to[short] (8.5,13.625);
        \draw (9.375,12.75) to[short] (8.375,12.75);
        \draw [short] (10.875,12.75) -- (11.375,13.625);
        \draw [short] (10.875,12.75) -- (11.625,12);
        \draw [-{Stealth[scale=1.5]}, ] (10,14.125) -- (10,13.25);
        \begin{scope}[transparency group, opacity=1]
        \node at (9,15.25) [circ, color={rgb,255:red,0; green,0; blue,0}] {};
        \end{scope}
        \begin{scope}[transparency group, opacity=1]
        \node at (10.875,15.25) [circ, color={rgb,255:red,0; green,0; blue,0}] {};
        \end{scope}
        \node [font=\fontsize{8.0pt}{10.4pt}\selectfont, inner xsep=0.080cm, inner ysep=0.085cm, rounded corners=0.000cm] at (12.25,15.625) {$x^-$};
        \node [font=\fontsize{6.8pt}{8.9pt}\selectfont, inner xsep=0.080cm, inner ysep=0.085cm, rounded corners=0.020cm] at (10.875,14.75) {$2i\hspace{-2pt}-\hspace{-2pt}1\hspace{-2pt}+\hspace{-2pt}r_2$};
        \draw [short] (9,15.25) .. controls (9.75,15.75) and (10.25,15.75) .. (10.875,15.25);
        \draw [short] (9,15.25) .. controls (9.75,14.75) and (10.25,14.75) .. (10.875,15.25);
        \draw [short] (9,15.25) -- (8.625,15.75);
        \draw [short] (9,15.25) .. controls (8.5,15.5) and (8.75,15.375) .. (8,15.5);
        \draw [short] (9,15.25) .. controls (8.5,15) and (8.875,15.125) .. (8,15.125);
        \draw [short] (10.875,15.25) -- (11.125,15.75);
        \draw [short] (10.875,15.25) .. controls (11.5,15.5) and (11.125,15.375) .. (12,15.5);
        \draw [short] (10.875,15.25) .. controls (11.5,15) and (11.125,15.125) .. (12,15.125);
        \node [font=\fontsize{8.0pt}{10.4pt}\selectfont, inner xsep=0.080cm, inner ysep=0.085cm, rounded corners=0.000cm] at (11.75,11.875) {$x$};
        \node [font=\fontsize{8.0pt}{10.4pt}\selectfont, inner xsep=0.080cm, inner ysep=0.085cm, rounded corners=0.016cm] at (12.25,15) {$x^+$};
        \node [font=\fontsize{8.0pt}{10.4pt}\selectfont, inner xsep=0.080cm, inner ysep=0.085cm, rounded corners=0.000cm] at (7.625,15) {$x^-$};
        \node [font=\fontsize{8.0pt}{10.4pt}\selectfont, inner xsep=0.080cm, inner ysep=0.085cm, rounded corners=0.000cm] at (11.5,13.75) {$2r_2$};
        \node [font=\fontsize{8.0pt}{10.4pt}\selectfont, inner xsep=0.080cm, inner ysep=0.085cm, rounded corners=0.000cm] at (8.125,12.75) {$x$};
        \node [font=\fontsize{6.4pt}{8.5pt}\selectfont, inner xsep=0.080cm, inner ysep=0.085cm, rounded corners=0.020cm] at (8.875,14.75) {$2g\hspace{-2pt}-\hspace{-2pt}2i\hspace{-2pt}-\hspace{-2pt}1\hspace{-2pt}+\hspace{-2pt}r_1$};
        \node [font=\fontsize{8.0pt}{10.4pt}\selectfont, inner xsep=0.080cm, inner ysep=0.085cm, rounded corners=0.016cm] at (7.625,15.625) {$x^+$};
        \node [font=\fontsize{8.0pt}{10.4pt}\selectfont, inner xsep=0.080cm, inner ysep=0.085cm, rounded corners=0.000cm] at (8.375,13.75) {$2r_1$};
        \node [font=\fontsize{8.0pt}{10.4pt}\selectfont, inner xsep=0.080cm, inner ysep=0.085cm, rounded corners=0.000cm] at (8.5,16) {$2r_1$};
        \node [font=\fontsize{8.0pt}{10.4pt}\selectfont, inner xsep=0.080cm, inner ysep=0.085cm, rounded corners=0.000cm] at (11.25,16) {$2r_2$};
        \node [font=\fontsize{8.0pt}{10.4pt}\selectfont, inner xsep=0.080cm, inner ysep=0.085cm, rounded corners=0.000cm] at (10.025,12.475) {$*$};
        \node [font=\fontsize{8.0pt}{10.4pt}\selectfont, inner xsep=0.080cm, inner ysep=0.085cm, rounded corners=0.016cm] at (10.025,15.85) {$*^+$};
        \node [font=\fontsize{8.0pt}{10.4pt}\selectfont, inner xsep=0.080cm, inner ysep=0.085cm, rounded corners=0.000cm] at (10.025,15.1) {$*^-$};
        \node [font=\fontsize{10.2pt}{13.3pt}\selectfont, inner xsep=0.080cm, inner ysep=0.085cm, rounded corners=0.010cm] at (6.75,14.25) {$\phi_1$:};
        \end{circuitikz}
			\end{center}
		\end{minipage}
        \qquad \qquad \qquad \qquad 
		\begin{minipage}{0.225\textwidth}
			\begin{center}
				\begin{circuitikz}
                \tikzstyle{every node}=[font=\fontsize{10.2pt}{13.3pt}\selectfont]
                \draw (8.625,12.75) to[short] (11.625,12.75);
                \node [font=\fontsize{9.1pt}{11.8pt}\selectfont, inner xsep=0.080cm, inner ysep=0.085cm, rounded corners=0.020cm] at (8.375,12.5) {$g-i$};
                \node [font=\fontsize{9.1pt}{11.8pt}\selectfont, inner xsep=0.080cm, inner ysep=0.085cm, rounded corners=0.000cm] at (11.5,12.375) {$i$};
                \begin{scope}[transparency group, opacity=1]
                \node at (8.625,12.75) [circ, color={rgb,255:red,0; green,0; blue,0}] {};
                \end{scope}
                \begin{scope}[transparency group, opacity=1]
                \node at (11.625,12.75) [circ, color={rgb,255:red,0; green,0; blue,0}] {};
                \end{scope}
                \draw (8.625,12.75) to[short] (7.75,13.625);
                \draw (8.625,12.75) to[short] (7.625,12.75);
                \draw [short] (11.625,12.75) -- (12.125,13.625);
                \draw [short] (11.625,12.75) -- (12.375,12);
                \draw [-{Stealth[scale=1.5]}, ] (10,14.125) -- (10,13.25);
                \begin{scope}[transparency group, opacity=1]
                \node at (9,15.25) [circ, color={rgb,255:red,0; green,0; blue,0}] {};
                \end{scope}
                \begin{scope}[transparency group, opacity=1]
                \node at (10.875,15.25) [circ, color={rgb,255:red,0; green,0; blue,0}] {};
                \end{scope}
                \node [font=\fontsize{8.0pt}{10.4pt}\selectfont, inner xsep=0.080cm, inner ysep=0.085cm, rounded corners=0.000cm] at (12.25,15.625) {$x^-$};
                \node [font=\fontsize{6.8pt}{8.9pt}\selectfont, inner xsep=0.080cm, inner ysep=0.085cm, rounded corners=0.020cm] at (10.875,14.75) {$2i\hspace{-2pt}-\hspace{-2pt}1\hspace{-2pt}+\hspace{-2pt}r_2$};
                \draw [short] (9,15.25) .. controls (9.75,15.25) and (10.125,15.25) .. (10.875,15.25);
                \draw [short] (9,15.25) -- (8.625,15.75);
                \draw [short] (9,15.25) .. controls (8.5,15.5) and (8.75,15.375) .. (8,15.5);
                \draw [short] (9,15.25) .. controls (8.5,15) and (8.875,15.125) .. (8,15.125);
                \draw [short] (10.875,15.25) -- (11.125,15.75);
                \draw [short] (10.875,15.25) .. controls (11.5,15.5) and (11.125,15.375) .. (12,15.5);
                \draw [short] (10.875,15.25) .. controls (11.5,15) and (11.125,15.125) .. (12,15.125);
                \node [font=\fontsize{8.0pt}{10.4pt}\selectfont, inner xsep=0.080cm, inner ysep=0.085cm, rounded corners=0.000cm] at (12.5,11.875) {$x$};
                \node [font=\fontsize{8.0pt}{10.4pt}\selectfont, inner xsep=0.080cm, inner ysep=0.085cm, rounded corners=0.016cm] at (12.25,15) {$x^+$};
                \node [font=\fontsize{8.0pt}{10.4pt}\selectfont, inner xsep=0.080cm, inner ysep=0.085cm, rounded corners=0.000cm] at (7.625,15) {$x^-$};
                \node [font=\fontsize{8.0pt}{10.4pt}\selectfont, inner xsep=0.080cm, inner ysep=0.085cm, rounded corners=0.000cm] at (12.25,13.75) {$2r_2\hspace{-2pt}-\hspace{-2pt}1$};
                \node [font=\fontsize{8.0pt}{10.4pt}\selectfont, inner xsep=0.080cm, inner ysep=0.085cm, rounded corners=0.000cm] at (7.375,12.75) {$x$};
                \node [font=\fontsize{6.8pt}{8.9pt}\selectfont, inner xsep=0.080cm, inner ysep=0.085cm, rounded corners=0.020cm] at (8.875,14.75) {$2g\hspace{-2pt}-\hspace{-2pt}2i\hspace{-2pt}-\hspace{-2pt}1\hspace{-2pt}+\hspace{-2pt}r_1$};
                \node [font=\fontsize{8.0pt}{10.4pt}\selectfont, inner xsep=0.080cm, inner ysep=0.085cm, rounded corners=0.016cm] at (7.625,15.625) {$x^+$};
                \node [font=\fontsize{8.0pt}{10.4pt}\selectfont, inner xsep=0.080cm, inner ysep=0.085cm, rounded corners=0.000cm] at (7.625,13.75) {$2r_1\hspace{-2pt}-\hspace{-2pt}1$};
                \node [font=\fontsize{8.0pt}{10.4pt}\selectfont, inner xsep=0.080cm, inner ysep=0.085cm, rounded corners=0.000cm] at (8.5,16) {$2r_1\hspace{-2pt}-\hspace{-2pt}1$};
                \node [font=\fontsize{8.0pt}{10.4pt}\selectfont, inner xsep=0.080cm, inner ysep=0.085cm, rounded corners=0.000cm] at (11.25,16) {$2r_2\hspace{-2pt}-\hspace{-2pt}1$};
                \node [font=\fontsize{8.0pt}{10.4pt}\selectfont, inner xsep=0.080cm, inner ysep=0.085cm, rounded corners=0.000cm] at (9.25,12.45) {$*$};
                \node [font=\fontsize{8.0pt}{10.4pt}\selectfont, inner xsep=0.080cm, inner ysep=0.085cm, rounded corners=0.000cm] at (9.375,15.375) {$*$};
                \node [font=\fontsize{10.2pt}{13.3pt}\selectfont, inner xsep=0.080cm, inner ysep=0.085cm, rounded corners=0.020cm] at (6.75,14.25) {$\phi_2$:};
                \begin{scope}[transparency group, opacity=1]
                \node at (9.875,15.25) [circ, color={rgb,255:red,0; green,0; blue,0}] {};
                \end{scope}
                \begin{scope}[transparency group, opacity=1]
                \node at (10,12.75) [circ, color={rgb,255:red,0; green,0; blue,0}] {};
                \end{scope}
                \node [font=\fontsize{8.0pt}{10.4pt}\selectfont, inner xsep=0.080cm, inner ysep=0.085cm, rounded corners=0.000cm] at (10.75,12.45) {$\star$};
                \node [font=\fontsize{8.0pt}{10.4pt}\selectfont, inner xsep=0.080cm, inner ysep=0.085cm, rounded corners=0.000cm] at (10.375,15.375) {$\star$};
                \end{circuitikz}
			\end{center}
		\end{minipage}
        \qquad
        \qquad
        \qquad 
		\caption{The Harmonic Morphisms giving rise to the gluing maps in (1) and (2)}
		\label{fig: gluing 1 2}
	\end{figure}
    \item If $S_\pbf = \varnothing$, to the previous list, we add the gluing map
    \[
    \overline{\Mcal}_{i, x+1} \times \overline\Rcal'_{g - i, 2r; m-x+1} \rarr \rgrmbar'
    \]
    which glues a cover in $\overline\Rcal_{g - i, 2r; m-x+1}$ to the trivial étale cover $C \amalg C  \rarr C$ of at the point $*$ for $[C, p_1,\dots, p_x,*] \in \Mcal_{i, x+1}$. See Figure \ref{fig: gluing 3}.1.           
    \item Define the clutching maps
    \begin{align*}
    &\overline\Rcal_{g-1, 2r;m + 2}' \rarr \rgrmbar', \quad [\widetilde C/ C, (p_i)_{i = 1}^{2r}; (x_j)_{j = 1}^m, (x_j^\pm)_{j = 1}^m]\mapsto\\
    &[\widetilde C/\{x_{m + 1}^\pm \sim x_{m + 2}^\pm\} \rarr C/\{x_{m + 1} \sim x_{m + 2}\}, (p_i)_{i =1}^{2r}; (x_j)_{j = 1}^m, (x^\pm_j)_{j = 1}^m].
    \end{align*}
    \begin{figure}[!ht]
    \hskip -2 em
		\begin{minipage}{0.225\textwidth}
			\begin{center}
				\begin{circuitikz}
\tikzstyle{every node}=[font=\fontsize{6.8pt}{8.9pt}\selectfont]
\draw (9.375,12.75) to[short] (10.875,12.75);
\node [font=\fontsize{9.1pt}{11.8pt}\selectfont, inner xsep=0.080cm, inner ysep=0.085cm, rounded corners=0.000cm] at (9.125,12.5) {$i$};
\node [font=\fontsize{9.1pt}{11.8pt}\selectfont, inner xsep=0.080cm, inner ysep=0.085cm, rounded corners=0.020cm] at (10.75,12.375) {$g\hspace{-2pt}-\hspace{-2pt}i$};
\begin{scope}[transparency group, opacity=1]
\node at (9.375,12.75) [circ, color={rgb,255:red,0; green,0; blue,0}] {};
\end{scope}
\begin{scope}[transparency group, opacity=1]
\node at (10.875,12.75) [circ, color={rgb,255:red,0; green,0; blue,0}] {};
\end{scope}
\draw (9.375,12.75) to[short] (8.375,12.75);
\draw [short] (10.875,12.75) -- (11.375,13.625);
\draw [short] (10.875,12.75) -- (11.625,12);
\draw [-{Stealth[scale=1.5]}, ] (10,14.125) -- (10,13.25);
\begin{scope}[transparency group, opacity=1]
\node at (9.25,14.875) [circ, color={rgb,255:red,0; green,0; blue,0}] {};
\end{scope}
\begin{scope}[transparency group, opacity=1]
\node at (11.25,15.25) [circ, color={rgb,255:red,0; green,0; blue,0}] {};
\end{scope}
\node [font=\fontsize{8.0pt}{10.4pt}\selectfont, inner xsep=0.080cm, inner ysep=0.085cm, rounded corners=0.000cm] at (12.75,15.625) {$x^-$};
\node [font=\fontsize{6.8pt}{8.9pt}\selectfont, inner xsep=0.080cm, inner ysep=0.085cm, rounded corners=0.020cm] at (11.5,14.75) {$2g\hspace{-2pt}-\hspace{-2pt}2i\hspace{-2pt}-\hspace{-2pt}1\hspace{-2pt}+\hspace{-2pt}r$};
\draw [short] (9.25,15.75) -- (11.25,15.25);
\draw [short] (9.25,14.875) -- (11.25,15.25);
\draw [short] (9.25,15.75) -- (8.25,15.75);
\draw [short] (9.25,14.875) -- (8.25,14.875);
\draw [short] (11.25,15.25) -- (11.5,15.75);
\draw [short] (11.25,15.25) .. controls (11.875,15.5) and (11.5,15.375) .. (12.5,15.5);
\draw [short] (11.25,15.25) .. controls (11.875,15) and (11.5,15.125) .. (12.5,15.125);
\node [font=\fontsize{8.0pt}{10.4pt}\selectfont, inner xsep=0.080cm, inner ysep=0.085cm, rounded corners=0.000cm] at (11.75,11.875) {$x$};
\node [font=\fontsize{8.0pt}{10.4pt}\selectfont, inner xsep=0.080cm, inner ysep=0.085cm, rounded corners=0.016cm] at (12.75,15) {$x^+$};
\node [font=\fontsize{8.0pt}{10.4pt}\selectfont, inner xsep=0.080cm, inner ysep=0.085cm, rounded corners=0.000cm] at (8,14.875) {$x^-$};
\node [font=\fontsize{8.0pt}{10.4pt}\selectfont, inner xsep=0.080cm, inner ysep=0.085cm, rounded corners=0.000cm] at (11.5,13.75) {$2r$};
\node [font=\fontsize{8.0pt}{10.4pt}\selectfont, inner xsep=0.080cm, inner ysep=0.085cm, rounded corners=0.000cm] at (8.125,12.75) {$x$};
\node [font=\fontsize{6.8pt}{8.9pt}\selectfont, inner xsep=0.080cm, inner ysep=0.085cm, rounded corners=0.000cm] at (9.25,14.5) {$i$};
\node [font=\fontsize{8.0pt}{10.4pt}\selectfont, inner xsep=0.080cm, inner ysep=0.085cm, rounded corners=0.016cm] at (8,16) {$x^+$};
\node [font=\fontsize{8.0pt}{10.4pt}\selectfont, inner xsep=0.080cm, inner ysep=0.085cm, rounded corners=0.000cm] at (11.625,16) {$2r$};
\node [font=\fontsize{8.0pt}{10.4pt}\selectfont, inner xsep=0.080cm, inner ysep=0.085cm, rounded corners=0.000cm] at (10.125,12.375) {$*$};
\node [font=\fontsize{8.0pt}{10.4pt}\selectfont, inner xsep=0.080cm, inner ysep=0.085cm, rounded corners=0.016cm] at (10.25,15.725) {$*^+$};
\node [font=\fontsize{8.0pt}{10.4pt}\selectfont, inner xsep=0.080cm, inner ysep=0.085cm, rounded corners=0.000cm] at (10.25,14.825) {$*^-$};
\node [font=\fontsize{10.2pt}{13.3pt}\selectfont, inner xsep=0.080cm, inner ysep=0.085cm, rounded corners=0.010cm] at (7.125,14.25) {$\phi_3$:};
\begin{scope}[transparency group, opacity=1]
\node at (9.25,15.75) [circ, color={rgb,255:red,0; green,0; blue,0}] {};
\end{scope}
\node [font=\fontsize{6.8pt}{8.9pt}\selectfont, inner xsep=0.080cm, inner ysep=0.085cm, rounded corners=0.000cm] at (9.125,16) {$i$};
\end{circuitikz}
                \end{center}
		\end{minipage}
        \qquad
        \qquad
        \qquad 
        \qquad 
        \begin{minipage}{0.225\textwidth}
			\begin{center}
            \begin{circuitikz}
            \tikzstyle{every node}=[font=\fontsize{8.0pt}{10.4pt}\selectfont]
            \node [font=\fontsize{9.1pt}{11.8pt}\selectfont, inner xsep=0.080cm, inner ysep=0.085cm, rounded corners=0.020cm] at (10.75,12.375) {$g\hspace{-2pt}-\hspace{-2pt}1$};
            \begin{scope}[transparency group, opacity=1]
            \node at (10.875,12.75) [circ, color={rgb,255:red,0; green,0; blue,0}] {};
            \end{scope}
            \draw [short] (10.875,12.75) -- (11.375,13.625);
            \draw [short] (10.875,12.75) -- (11.625,12);
            \draw [-{Stealth[scale=1.5]}, ] (10.875,14.125) -- (10.875,13.25);
            \begin{scope}[transparency group, opacity=1]
            \node at (10.875,15.25) [circ, color={rgb,255:red,0; green,0; blue,0}] {};
            \end{scope}
            \node [font=\fontsize{8.0pt}{10.4pt}\selectfont, inner xsep=0.080cm, inner ysep=0.085cm, rounded corners=0.000cm] at (12.375,15.625) {$m^-$};
            \node [font=\fontsize{6.8pt}{8.9pt}\selectfont, inner xsep=0.080cm, inner ysep=0.085cm, rounded corners=0.020cm] at (11.325,14.85) {$2g\hspace{-2pt}-\hspace{-2pt}3\hspace{-2pt}+\hspace{-2pt}r$};
            \draw [short] (10.875,15.25) -- (11.125,15.75);
            \draw [short] (10.875,15.25) .. controls (11.5,15.5) and (11.125,15.375) .. (12.125,15.5);
            \draw [short] (10.875,15.25) .. controls (11.5,15) and (11.125,15.125) .. (12.125,15.125);
            \node [font=\fontsize{8.0pt}{10.4pt}\selectfont, inner xsep=0.080cm, inner ysep=0.085cm, rounded corners=0.000cm] at (11.75,11.875) {$m$};
            \node [font=\fontsize{8.0pt}{10.4pt}\selectfont, inner xsep=0.080cm, inner ysep=0.085cm, rounded corners=0.016cm] at (12.375,15) {$m^+$};
            \node [font=\fontsize{8.0pt}{10.4pt}\selectfont, inner xsep=0.080cm, inner ysep=0.085cm, rounded corners=0.000cm] at (11.5,13.75) {$2r$};
            \node [font=\fontsize{8.0pt}{10.4pt}\selectfont, inner xsep=0.080cm, inner ysep=0.085cm, rounded corners=0.000cm] at (11.25,16) {$2r$};
            \node [font=\fontsize{8.0pt}{10.4pt}\selectfont, inner xsep=0.080cm, inner ysep=0.085cm, rounded corners=0.000cm] at (9.625,12.625) {$*$};
            \node [font=\fontsize{8.0pt}{10.4pt}\selectfont, inner xsep=0.080cm, inner ysep=0.085cm, rounded corners=0.016cm] at (9.875,16.25) {$*^+$};
            \node [font=\fontsize{8.0pt}{10.4pt}\selectfont, inner xsep=0.080cm, inner ysep=0.085cm, rounded corners=0.000cm] at (9.825,14.375) {$*^-$};
            \draw [short] (10.875,12.75) .. controls (9.625,13.625) and (9.625,11.875) .. (10.875,12.75);
            \draw [short] (10.875,15.25) .. controls (10,16.625) and (9.375,15.625) .. (10.875,15.25);
            \draw [short] (10.875,15.25) .. controls (9.5,14.625) and (10.25,13.875) .. (10.875,15.25);
            \node [font=\fontsize{10.2pt}{13.3pt}\selectfont, inner xsep=0.080cm, inner ysep=0.085cm, rounded corners=0.010cm] at (9.025,14.25) {$\phi_4$:};
            \end{circuitikz}
            \end{center}
        \end{minipage}
		\caption{The Harmonic Morphisms giving rise to the gluing maps in (3) and (4)}
		\label{fig: gluing 3}
	\end{figure}
    \item Define the clutching map
    \[
    \overline\Mcal_{g-1, m + 2} \rarr \overline\Rcal_{g;m}',\quad [C, (x_j)_{j = 1}^{m + 2}] \mapsto [(C\cup_{\{x_{m + 1}, x_{m + 2}\}}C) / C, (x_j)_{j = 1}^m, (x_j^\pm)_{j = 1}^m].
    \]
    See Figure \ref{fig: gluing 5}.1 for the graph--theoretic counterpart.
    \item Finally, we define the clutching map
    \begin{align*}
    &\overline\Rcal_{g-1, 2r+2;m}' \rarr \overline\Rcal_{g, 2r;m}', \quad [\widetilde C / C, (p_i)_{i = 1}^{2r + 2}; (x_j)_{j = 1}^m, (x_j^\pm)_{j = 1}^m] \mapsto \\
    &\mapsto [\widetilde C / \{\widetilde p_{2r + 1}\sim \widetilde p_{2r + 2}\} \rarr C / \{p_{2r + 1} \sim p_{2r + 2}\}, (p_i)_{i = 1}^{2r}; (x_j^\pm)_{j = 1}^m]
    \end{align*}
    where $\widetilde p_i \in \widetilde C$ is the ramification point over $p_i \in C$. See Figure \ref{fig: gluing 5}.2.
    \begin{figure}[!ht]
    \hskip -2 em
		\begin{minipage}{0.225\textwidth}
			\begin{center}
				\begin{circuitikz}
\tikzstyle{every node}=[font=\fontsize{6.8pt}{8.9pt}\selectfont]
\node [font=\fontsize{9.1pt}{11.8pt}\selectfont, inner xsep=0.080cm, inner ysep=0.085cm, rounded corners=0.020cm] at (10.75,12.375) {$g\hspace{-2pt}-\hspace{-2pt}1$};
\begin{scope}[transparency group, opacity=1]
\node at (10.875,12.75) [circ, color={rgb,255:red,0; green,0; blue,0}] {};
\end{scope}
\draw [short] (10.875,12.75) -- (12.125,12.75);
\draw [-{Stealth[scale=1.5]}, ] (10.875,14.125) -- (10.875,13.25);
\begin{scope}[transparency group, opacity=1]
\node at (10.875,14.5) [circ, color={rgb,255:red,0; green,0; blue,0}] {};
\end{scope}
\node [font=\fontsize{8.0pt}{10.4pt}\selectfont, inner xsep=0.080cm, inner ysep=0.085cm, rounded corners=0.000cm] at (12.625,15.5) {$m^-$};
\node [font=\fontsize{6.8pt}{8.9pt}\selectfont, inner xsep=0.080cm, inner ysep=0.085cm, rounded corners=0.000cm] at (11,14.25) {$g\hspace{-2pt}-\hspace{-2pt}1$};
\draw [short] (10.875,15.375) -- (12.125,15.375);
\draw [short] (10.875,14.5) -- (12.125,14.5);
\node [font=\fontsize{8.0pt}{10.4pt}\selectfont, inner xsep=0.080cm, inner ysep=0.085cm, rounded corners=0.000cm] at (12.375,12.75) {$m$};
\node [font=\fontsize{8.0pt}{10.4pt}\selectfont, inner xsep=0.080cm, inner ysep=0.085cm, rounded corners=0.016cm] at (12.625,14.5) {$m^+$};
\node [font=\fontsize{8.0pt}{10.4pt}\selectfont, inner xsep=0.080cm, inner ysep=0.085cm, rounded corners=0.000cm] at (9.875,12.625) {$*$};
\draw [short] (10.875,12.75) .. controls (9.75,13.5) and (9.75,12) .. (10.875,12.75);
\node [font=\fontsize{8.0pt}{10.4pt}\selectfont, inner xsep=0.080cm, inner ysep=0.085cm, rounded corners=0.000cm] at (9.75,14.875) {$*^-$};
\draw [short] (10.875,14.5) .. controls (10.25,14.375) and (10.25,15.25) .. (10.875,15.375);
\draw [short] (10.875,14.5) .. controls (9.75,14.5) and (9.75,15.25) .. (10.875,15.375);
\begin{scope}[transparency group, opacity=1]
\node at (10.875,15.375) [circ, color={rgb,255:red,0; green,0; blue,0}] {};
\end{scope}
\node [font=\fontsize{8.0pt}{10.4pt}\selectfont, inner xsep=0.080cm, inner ysep=0.085cm, rounded corners=0.016cm] at (10.75,14.875) {$*^+$};
\node [font=\fontsize{6.8pt}{8.9pt}\selectfont, inner xsep=0.080cm, inner ysep=0.085cm, rounded corners=0.000cm] at (10.875,15.75) {$g\hspace{-2pt}-\hspace{-2pt}1$};
\node [font=\fontsize{10.2pt}{13.3pt}\selectfont, inner xsep=0.080cm, inner ysep=0.085cm, rounded corners=0.020cm] at (8.75,13.75) {$\phi_5$:};
\end{circuitikz}
                \end{center}
		\end{minipage}
        \qquad
        \qquad
        \qquad 
        \qquad 
        \begin{minipage}{0.225\textwidth}
			\begin{center}
            \begin{circuitikz}
\tikzstyle{every node}=[font=\fontsize{8.0pt}{10.4pt}\selectfont]
\node [font=\fontsize{9.1pt}{11.8pt}\selectfont, inner xsep=0.080cm, inner ysep=0.085cm, rounded corners=0.020cm] at (10.75,12.375) {$g\hspace{-2pt}-\hspace{-2pt}1$};
\begin{scope}[transparency group, opacity=1]
\node at (10.875,12.75) [circ, color={rgb,255:red,0; green,0; blue,0}] {};
\end{scope}
\draw [short] (10.875,12.75) -- (11.375,13.625);
\draw [short] (10.875,12.75) -- (11.625,12);
\draw [-{Stealth[scale=1.5]}, ] (10.875,14.125) -- (10.875,13.25);
\begin{scope}[transparency group, opacity=1]
\node at (10.875,14.875) [circ, color={rgb,255:red,0; green,0; blue,0}] {};
\end{scope}
\node [font=\fontsize{8.0pt}{10.4pt}\selectfont, inner xsep=0.080cm, inner ysep=0.085cm, rounded corners=0.000cm] at (12.375,15.25) {$m^-$};
\node [font=\fontsize{6.8pt}{8.9pt}\selectfont, inner xsep=0.080cm, inner ysep=0.085cm, rounded corners=0.020cm] at (11.125,14.5) {$2g\hspace{-2pt}-\hspace{-2pt}3\hspace{-2pt}+\hspace{-2pt}r$};
\draw [short] (10.875,14.875) -- (11.125,15.375);
\draw [short] (10.875,14.875) .. controls (11.5,15.125) and (11.125,15) .. (12.125,15.125);
\draw [short] (10.875,14.875) .. controls (11.5,14.625) and (11.125,14.75) .. (12.125,14.75);
\node [font=\fontsize{8.0pt}{10.4pt}\selectfont, inner xsep=0.080cm, inner ysep=0.085cm, rounded corners=0.000cm] at (11.75,11.875) {$m$};
\node [font=\fontsize{8.0pt}{10.4pt}\selectfont, inner xsep=0.080cm, inner ysep=0.085cm, rounded corners=0.016cm] at (12.375,14.625) {$m^+$};
\node [font=\fontsize{8.0pt}{10.4pt}\selectfont, inner xsep=0.080cm, inner ysep=0.085cm, rounded corners=0.000cm] at (11.5,13.75) {$2r$};
\node [font=\fontsize{8.0pt}{10.4pt}\selectfont, inner xsep=0.080cm, inner ysep=0.085cm, rounded corners=0.000cm] at (11.25,15.625) {$2r$};
\node [font=\fontsize{8.0pt}{10.4pt}\selectfont, inner xsep=0.080cm, inner ysep=0.085cm, rounded corners=0.000cm] at (10.15,13.125) {$*$};
\draw [short] (10.875,12.75) .. controls (9.5,13.625) and (9.5,11.875) .. (10.875,12.75);
\node [font=\fontsize{8.0pt}{10.4pt}\selectfont, inner xsep=0.080cm, inner ysep=0.085cm, rounded corners=0.020cm] at (10.15,12.35) {$\star$};
\begin{scope}[transparency group, opacity=1]
\node at (9.875,12.75) [circ, color={rgb,255:red,0; green,0; blue,0}] {};
\end{scope}
\draw [short] (10.875,14.875) .. controls (9.5,15.75) and (9.5,14) .. (10.875,14.875);
\begin{scope}[transparency group, opacity=1]
\node at (9.875,14.875) [circ, color={rgb,255:red,0; green,0; blue,0}] {};
\end{scope}
\node [font=\fontsize{8.0pt}{10.4pt}\selectfont, inner xsep=0.080cm, inner ysep=0.085cm, rounded corners=0.000cm] at (10.15,15.25) {$*$};
\node [font=\fontsize{8.0pt}{10.4pt}\selectfont, inner xsep=0.080cm, inner ysep=0.085cm, rounded corners=0.020cm] at (10.15,14.4) {$\star$};
\node [font=\fontsize{10.2pt}{13.3pt}\selectfont, inner xsep=0.080cm, inner ysep=0.085cm, rounded corners=0.020cm] at (8.75,13.65) {$\phi_6$:};
\end{circuitikz}
            \end{center}
        \end{minipage}
		\caption{The Harmonic Morphisms giving rise to the clutching maps in (5) and (6)}
		\label{fig: gluing 5}
	\end{figure}
    \end{enumerate}
    See also \cite[Section 3]{hulek} for a similar combinatorial description of the boundary divisors.
\end{example}

The collection of gluing and clutching maps described in Example \ref{ex: gluing and clutching maps} allows us to give an intrinsic definition of the tautological ring of $\rgrmbar'$.

\begin{definition}
    \begin{enumerate}
        \item The \textit{system of tautological rings $\{\R^\bullet(\rgrmbar')\}_{g, r, m}$} is the smallest system of $\QQ$--subalgebras of $\{\CH^\bullet(\rgrmbar')\}_{g, r, m}$ which contains the fundamental classes $[\rgrmbar']$ and is stable under pushforward by the gluing and clutching maps from Example \ref{ex: gluing and clutching maps}, as well as pushforward under forgetful maps $\overline\Rcal_{g, 2r;m + 1}' \rarr \rgrmbar'$. The tautological ring of $\rgrmbar$ is the image of $\R^\bullet(\rgrmbar')$ inside $\CH^\bullet(\rgrmbar)$ under the natural pushforward map $\CH^\bullet(\rgrmbar') \rarr \CH^\bullet(\rgrmbar)$.
        \item The \textit{cohomological tautological ring} or $\rgrmbar'$ (respectively $\rgrmbar$) is the image of the tautological ring $\R^\bullet(\rgrmbar')$ inside $\H^\bullet(\rgrmbar')$ under the cycle class map $\CH^\bullet \xrightarrow[]{\mathrm{cl}} \H^\bullet$.
    \end{enumerate}
\end{definition}

As for $\ol{\mc M}_{g,n}$, we have an obvious notion of specialisation for Prym structures on $G$. 

\begin{definition}\label{def: Specialisation Prym}
    Given a Prym structure $\phi:G'\ra G$ we will say that a Prym structure $\psi:A'\ra A$ is a \textit{specialisation} of $\phi$ if one may obtain it from $\phi$ by replacing each vertex and its preimages by Prym structures between graphs of the same genus and valence.
\end{definition}

\begin{remark}
    Notice that, when $\#\phi^{-1}(v)=2$, the only possible specialisations of $\phi$ at $v$ are those given by replacing $v$ by a graph of the same genus and valence and by replacing its preimages by \textit{the same} graph. 
\end{remark}

\begin{remark}
    By definition, points of $\ol{\mc R}'_\phi$ correspond to Prym curves together with a $\phi$--structure on the Prym structure on their dual graphs. 
\end{remark}

\begin{definition}
    A Prym structure $\phi:G'\ra G$ is said to have a \textit{$(\phi_1,\phi_2)$--structure} if it carries both a $\phi_1$ and a $\phi_2$--structure. Furthermore, we will say that $\phi$ is a \textit{generic} $(\phi_1,\phi_2)$--Prym structure if for each $h\in H(G)$, the restriction $\phi^{-1}(h)\ra h$ is given by $\phi_1$ or $\phi_2$.
\end{definition}

\begin{lemma}
    Let $\phi_1$ and $\phi_2$ be Prym structures on graphs $G_1$ and $G_2$. Let $\ol{\mc R}'_{\phi_1,\phi_2}$ be the disjoint union of $\ol{\mc R}'_{\phi}$ where $\phi$ is a generic $(\phi_1,\phi_2)$--Prym structure. Then, the following diagram is Cartesian.
    \[
    \begin{tikzcd}
        \ol{\mc R}'_{\phi_1,\phi_2} \arrow[r, ] \arrow[d,]  & \ol{\mc R}'_{\phi_1}\arrow[d, "\chi_{\phi_1}"]\\
        \ol{\mc R}'_{\phi_2} \arrow[r, "\chi_{\phi_2}"]  & \ol{\mc R}'_{g;m} \arrow[ul, phantom, very near end, "\ulcorner"]
    \end{tikzcd}
    \]
\end{lemma}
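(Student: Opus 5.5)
We follow the argument for $\ol{\mc M}_{g,n}$ in \cite[Appendix A]{pandhairpande2003nontaut} (see also Graber--Pandharipande). The idea is to describe both sides of the square functorially on $S$--points and show they agree. Throughout we work with the primed spaces $\ol{\mc R}'_{g;m}$, where the fibres over the markings are ordered; this removes the ambiguity in labelling preimages of legs.

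\textbf{Step 1: the fibre product as a moduli problem.} By the earlier remark, an $S$--point of $\ol{\mc R}'_{\phi_i}$ is a family of stable Prym curves $(\widetilde C\to C)/S$ with a $\phi_i$--structure. Such a structure consists of two things. The first is a map of graphs from $\phi_i$ onto the dual harmonic map of every geometric fibre, contracting edges and compatible with specialisation. The second is an identification of the normalisation along the distinguished nodes with the product of the factor families. Hence an $S$--point of the fibre product $\ol{\mc R}'_{\phi_1}\times_{\ol{\mc R}'_{g;m}}\ol{\mc R}'_{\phi_2}$ is one family $\widetilde C\to C$ over $S$ carrying simultaneously a $\phi_1$-- and a $\phi_2$--structure.

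\textbf{Step 2: combinatorial reduction.} Fix such a pair of structures over a connected base. Let $\phi:G'\to G$ be the Prym structure obtained from the dual harmonic map of the generic fibre by contracting every edge that is not in the image of $\phi_1$ or $\phi_2$. This $\phi$ is a $(\phi_1,\phi_2)$--structure by construction. It is generic: every half-edge of $G$ together with its preimages comes from $\phi_1$ or $\phi_2$, and the Prym conditions i)--iii) are preserved under contraction and union. The last point needs care. An exceptional vertex must stay adjacent to exactly two stable vertices. The ramification indices $n_{[h']}$ are determined by whether an adjacent vertex is exceptional, so they agree whether read from $\phi_1$ or from $\phi_2$. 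The family then carries a canonical $\phi$--structure. Conversely, a $\phi$--structure induces $\phi_1$-- and $\phi_2$--structures by forgetting edges. This gives the two maps in the square and a morphism $\ol{\mc R}'_{\phi_1,\phi_2}\to$ fibre product. Its inverse sends a pair of structures to the $\phi$ just built, with the induced $\phi$--structure. The generic $\phi$ are exactly the ones that occur, and distinct $\phi$ (up to isomorphism of the pair of maps) give disjoint open and closed pieces. Together this yields the disjoint-union decomposition.

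\textbf{Step 3: scheme-theoretic (stack) isomorphism.} The bijection on points is not enough; we must check it is an isomorphism of stacks, i.e.\ that the fibre product is reduced with the expected structure. We would do this with deformation theory. The universal deformation of a Prym curve is smooth, and its coordinates are smoothing parameters of the nodes of $C$. For a node joined to an exceptional component, the relevant parameter is the square root of the usual one, following Cornalba's local description in \cite{cornalba1989theta}. The other coordinates are smooth parameters on the components. The image of $\chi_{\phi_i}$ is locally the coordinate subspace cut out by the vanishing of the smoothing parameters of the edges of $\phi_i$. Two coordinate subspaces intersect in the coordinate subspace of the union of the edge sets, which is exactly the local picture of $\chi_\phi$. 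Transversality in the orthogonal directions gives the isomorphism étale locally, compatible with automorphisms.

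We expect Step 3 to be the main obstacle. Near exceptional components the Prym deformation space is a ramified (order $2$) cover of the curve deformation space. One must check that the coordinate subspaces corresponding to $\phi_1$ and $\phi_2$ are still cut out by the square-root parameters, so that no nonreduced or extra components appear. Automorphisms acting by $-1$ on the exceptional fibres must also be accounted for, so that the counting of $\phi$ matches the stacky fibre product. The primed ordering of preimages is what makes this matching clean.
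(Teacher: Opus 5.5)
Your proposal is correct and is essentially the paper's approach. The paper proves this lemma only by saying that the argument of \cite[Appendix A, Proposition 9]{pandhairpande2003nontaut} goes through mutatis mutandis, and your Steps 1--3 are exactly that argument, with the Prym-specific adjustments made explicit: the square-root smoothing parameters at exceptional components, the ordered fibres in $\ol{\mc R}'_{g;m}$, and the inessential automorphisms.
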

\begin{proof}
    Mutatis mutandis, the proof proceeds as in \cite[Appendix A, Proposition 9]{pandhairpande2003nontaut}.
\end{proof}
Notice now that we may use the local description of the forgetful map, $u:\ol{\mc R}'_{g;m}\ra \ol{\mc M}_{g,m}$, to easily compute the normal bundle of $\chi_\phi:\ol{\mc R}'_{\phi}\ra\ol{\mc R}'_{g;m}$.

\begin{lemma}\label{lem: normal bundle}
    Let $\phi:G'\ra G$ be a Prym structure on a graph $G$. Then the normal bundle to $\chi_\phi: \ol{\mc R}'_\phi\ra \rgmbar'$ is:
    \[\mc N_\phi\cong \bigoplus_{e=\{h,\ol h\}\in E^{\mathrm{ram}}(G)^c}u^*\left(\mathbb L_h^\vee\otimes \mathbb L_{\ol h}^\vee\right)\quad \oplus \bigoplus_{e=\{h,\ol h\}\in E^{\mathrm{ram}}(G)}M_e\]
    where $M_e^{\otimes 2}\cong u^*\left(\mathbb L_h^\vee\otimes \mathbb L_{\ol h}^\vee\right)$ and $\mathbb L_{h}$ is the line bundle given by the cotangent lines at $h$.
\end{lemma}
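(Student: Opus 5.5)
The plan is to reduce the computation to the known description of the normal bundle of boundary strata in $\overline{\Mcal}_{g,m}$, via the forgetful map $u:\rgmbar'\rarr\overline{\Mcal}_{g,m}$. That map is étale-locally described by the deformation theory of admissible double covers (equivalently, of Cornalba's Prym curves). Recall that in $\overline{\Mcal}_{g,m}$ the stratum $\chi_G:\overline{\Mcal}_G\rarr\overline{\Mcal}_{g,m}$ has normal bundle $\bigoplus_{e=\{h,\ol h\}}\mathbb L_h^\vee\otimes\mathbb L_{\ol h}^\vee$, the summand for $e$ being the first-order smoothing parameter $t_e$ of the node. Since $\chi_\phi$ factors through $\chi_G$ after composing with $u$, and the deformation space of a Prym curve decomposes node by node, it suffices to compare, for each edge $e$ of $G$, the smoothing parameter of the node of the Prym curve with that of the underlying stable curve.

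First, the local structure. Take $[X,\eta,\beta]$ a general point of $\Delta_\phi$ and let $\mathrm{st}(X)$ be its stabilisation, i.e.\ contract the exceptional components. The versal deformation space of $\mathrm{st}(X)$ is a product of a smooth part and the coordinates $t_e$, one for each node. Following \cite{cornalba1989theta} and \cite{farkasLudwig2010prym}, the versal deformation space of the Prym curve is étale locally given as follows. For a node not blown up, the local coordinate is $t_e$ itself, and $u$ is étale there: the étale double cover extends uniquely over the smoothing, as in the trivial or non-trivial unramified gluings $\phi_1,\phi_3,\phi_4,\phi_5$. For a node replaced by an exceptional component $R$, which means $e\in E^{\ram}(G)$ after subdividing, the local coordinate is $\tau_e$ with $\tau_e^2=t_e$. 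Equivalently, on the admissible cover side the node of $\widetilde C$ over it has ramification index $2$, so its smoothing parameter $s$ satisfies $s^2=t$. The composite $u$ is then simply ramified along $\{\tau_e=0\}$.

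Second, globalise. The normal bundle of $\chi_\phi$ is the direct sum over edges of the line bundles of the corresponding smoothing parameters, because the boundary is normal crossings in the Prym versal space. This holds since $\rgmbar'$ is smooth as a stack and the coordinates $\tau_e$, $t_e$ are part of a coordinate system. For $e\notin E^{\ram}(G)$ the isomorphism $du$ identifies the summand with $u^*(\mathbb L_h^\vee\otimes\mathbb L_{\ol h}^\vee)$. For $e\in E^{\ram}(G)$, let $M_e$ be the line bundle whose fibre is the $\tau_e$-direction. The relation $\tau_e^2=t_e$ gives a canonical isomorphism $M_e^{\otimes2}\cong u^*(\mathbb L_h^\vee\otimes\mathbb L_{\ol h}^\vee)$, via the squaring map on normal directions, which is well defined and nonvanishing fibrewise. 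Here $h,\ol h$ are the half-edges of $\mathrm{st}(G)$ at the node of the stabilised curve. One also checks that the two nodes joining $R$ to the rest correspond to a single node downstairs, so there is one summand per such edge of the stabilised graph.

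The main obstacle is the local computation in the second case: verifying that the Cornalba deformation space really has coordinate $\tau$ with $\tau^2=t$. Equivalently, the square root $\eta$ with $\eta|_R=\Ocal_R(1)$ deforms over $\tau$ and not over $t$. I would check this using the admissible-cover model, where the local equation of $\widetilde C$ at the ramified node is $xy=s$ mapping to $uv=t$ with $u=x^2$, $v=y^2$, forcing $t=s^2$. This is exactly \cite[Section 3]{cornalba1989theta}. The remaining care is to handle the automorphism of $R$ (the $\pm1$ on $\eta$) correctly at the stack level, so that $M_e$ is a genuine line bundle rather than only its square being defined.
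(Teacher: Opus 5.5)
Your proposal is correct and follows the paper's proof: both rest on the local form of the forgetful map $u$ in deformation coordinates, $t_e=\tau_e^2$ at nodes of $\mathrm{st}(C)$ arising from exceptional components and $t_e=\tau_e$ otherwise, and both pull back the normal bundle $\mathbb L_h^\vee\otimes\mathbb L_{\ol h}^\vee$ of the corresponding boundary divisor of $\overline{\Mcal}_{g,m}$ to get $u^*\mathcal N_{\Delta_e}\cong\mathcal N_{\Delta'_e}^{\otimes 2}$ (exceptional nodes) or $u^*\mathcal N_{\Delta_e}\cong\mathcal N_{\Delta'_e}$ (all other nodes). Your extra remarks are details the paper leaves implicit: the admissible-cover check $t=s^2$, and the observation that there is one summand per exceptional node of the stabilised graph rather than per edge of $E^{\mathrm{ram}}(G)$.
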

\begin{proof}
    Let $(\tau_i)_{i = 1}^{3g - 3 + m}$ denote the local deformation parameters at $[\widetilde C\ra C; (x_j)_{j = 1}^m]\in \Delta_\phi$ and $(t_i)_{i = 1}^{3g - 3+ m}$ be the ones at $\st(C) \in \overline{\Mcal}_{g, m}$. Assume that $(t_i)_{i = 1}^r$ are the deformation parameters of the exceptional nodes of $\mathrm{st}(C)$, $(\tau_i)_{i = 1}^r$ those of the corresponding exceptional components of $C$, $(t_j)_{j=r+1}^s$ those of the non--exceptional nodes of $\mathrm{st}(C)$ and $(\tau_j)_{j=r+1}^s$ those of the corresponding nodes of $C$ so that, locally around $[\widetilde C\ra C; (x_j)_{j = 1}^m]$, we have that $\Delta_\phi$ is given by $\tau_i=0$ for $i\leq s$ and $u(\Delta_\phi)=\Delta_{\st(G)}$ by $t_i=0$ for $i\leq s$. By the deformation theory of ramified covers, the local expression of $u$ around $[\widetilde C/C; (x_j)_{j = 1}^m]$ is given by:
    \[
    \CC^{3g-3+m}_\tau\ra \CC^{3g-3+m}_{t}, \qquad t_i\mapsto\begin{cases}
        \tau_i^2 \quad \text{if }1 \le i \le r \\
        \tau_i \quad \text{if }r + 1 \le i \le 3g - 3 + m.
    \end{cases}
    \]
    Calling $\Delta_i$ the boundary divisor of $\ol{\mc M}_{g,m}$ locally given by $t_i=0$ and by $\Delta'_i$ the one of $\rgmbar$ given by $\tau_i=0$, we then have $u^*\mc N_{\Delta_i}\cong \mc N_{\Delta'_i}^{\otimes 2}$ for $i\leq r$ and $u^*\mc N_{\Delta_i}\cong \mc N_{\Delta'_i}$ for $r+1\leq i\leq s$.
    \vskip 0.5 em 
    The result now follows by recalling that, calling $h$ and $\ol h$ the two half--edges of $\st(G)$ corresponding to the node of $\st(C)$ having $t_i$ as deformation parameter, the normal bundle of the boundary divisor $\Delta_i$ is $\mathbb L_{h}^\vee\otimes \mathbb L_{\ol h}^\vee$.
\end{proof}

Now, let $\phi_1:G'_1\ra G_1$, $\phi_2:G'_2\ra G_2$ be Prym structures on graphs $G_1$ and $G_2$ and let $\phi$ be a generic $(\phi_1,\phi_2)$--Prym structure.
Denote by $u:\ol{\mc R}_\phi'\ra\ol{\mc M}_{\mathrm{st}(G_1)}$ the forgetful morphism and, for each vertex $v$ of $\st(G_1)$, denote by $p_v:\ol{\mc M}_{\mathrm{st}(G_1)}\ra \ol{\mc M}_{g(v),n(v)}$ the projection map. 
\vskip 0.5 em
Let $q_{1,\phi}(\underline{\psi_h}) \in \CH^\bullet(\ol{\mc R}_{\phi_1})$ be the following polynomial in the $\psi$ classes:
\[q_{1,\phi}(\underline{\psi_h}):=\prod_{e=\{h,\ol{h}\}}-u^*\left(p_{a(h)}^*\left(\psi_h\right)+p_{a(\ol{h})}^*\left(\psi_{\ol{h}}\right)\right)\] where the product runs over all edges $e$ of $G$ that correspond to both an edge of $G_1$ and of $G_2$, and that are non-exceptional in at least one of the two.
Let also $q_{2,\phi}(\underline{\psi_h})\in \CH^\bullet(\ol{\mc R}_{\phi_1})$ be the following polynomial in the $\psi$ classes:
\[q_{2,\phi}(\underline{\psi_h}):=\prod_{e=\{h,\ol{h}\}}-\frac{1}{2}u^*\left(p_{a(h)}^*\left(\psi_h\right)+p_{a(\ol{h})}^*\left(\psi_{\ol{h}}\right)\right)\] where the product runs over all edges $e$ of $G$ that correspond to both an exceptional edge of $G_1$ and of $G_2$. The following holds:
\begin{corollary}\label{cor: pullback boundary classes}
    With the above notations,
    \[\chi_{\phi_1}^*\left(\chi_{\phi_2,*}\left[\ol{\mc R}'_{\phi_2}\right]\right)=\sum_{\phi} \chi_{\phi,\phi_1,*}\left(q_{1,\phi}(\underline{\psi_h})\cdot q_{2,\phi}(\underline{\psi_h})\right)
    \]
    where the sum runs over all generic $(\phi_1,\phi_2)$--Prym structures.
\end{corollary}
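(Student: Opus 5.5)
The plan is to run the standard excess intersection argument, as in \cite[Appendix A]{pandhairpande2003nontaut}, on the Cartesian square of the preceding lemma. By that lemma, the fibre product $\ol{\mc R}'_{\phi_1}\times_{\ol{\mc R}'_{g;m}}\ol{\mc R}'_{\phi_2}$ is the disjoint union $\ol{\mc R}'_{\phi_1,\phi_2}=\coprod_\phi \ol{\mc R}'_\phi$ over generic $(\phi_1,\phi_2)$--Prym structures. Fulton's excess intersection formula for the regular embedding (up to finite cover) $\chi_{\phi_2}$ then gives
\[
\chi_{\phi_1}^*\chi_{\phi_2,*}[\ol{\mc R}'_{\phi_2}]=\sum_\phi \chi_{\phi,\phi_1,*}\Big(c_{\mathrm{top}}(E_\phi)\Big),
\]
where $E_\phi$ is the excess bundle. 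It is the quotient of $\chi_{\phi,\phi_2}^*\mc N_{\phi_2}$ by the normal bundle of $\ol{\mc R}'_\phi\to\ol{\mc R}'_{\phi_1}$. Equivalently, by genericity, it is the sum of the normal directions of $\phi_2$ that are also normal directions of $\phi_1$.

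Next I would identify $E_\phi$ using Lemma \ref{lem: normal bundle}. The normal bundle $\mc N_{\phi_2}$ splits edge by edge into line bundles attached to the edges of $G_2$. Since $\phi$ is generic, every edge of $G$ comes from $G_1$ or from $G_2$. The excess summands are exactly those edges of $G_2$ that are also edges of $G_1$, since these smoothing directions are already normal to $\ol{\mc R}'_{\phi_1}$. So $E_\phi=\bigoplus_e L_e$ over such shared edges, and $c_{\mathrm{top}}(E_\phi)=\prod_e c_1(L_e)$.

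The remaining step is to compute $c_1(L_e)$ edge by edge, following the case split of Lemma \ref{lem: normal bundle}.
\begin{itemize}
\item If $e$ is non-exceptional, the lemma gives $L_e=u^*(\mathbb L_h^\vee\otimes\mathbb L_{\ol h}^\vee)$. Pulling back through $p_{a(h)}, p_{a(\ol h)}$, this yields $c_1(L_e)=-u^*(p^*_{a(h)}\psi_h+p^*_{a(\ol h)}\psi_{\ol h})$.
\item If $e$ is exceptional, the lemma gives $L_e=M_e$ with $M_e^{\otimes2}\cong u^*(\mathbb L_h^\vee\otimes\mathbb L_{\ol h}^\vee)$. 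Rationally this means $c_1(M_e)=-\tfrac12 u^*(\cdots)$.
\end{itemize}
This produces $q_{1,\phi}\cdot q_{2,\phi}$. Here I must take care that the index sets match the definitions: edges that are exceptional in both $G_1$ and $G_2$ contribute to $q_{2,\phi}$, and all other shared edges to $q_{1,\phi}$.

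I expect the main obstacle to be the mixed case, where a shared edge is exceptional in one $G_i$ but not in the other. There I must check from the local coordinates ($t=\tau^2$ versus $t=\tau$) which parameter is the normal direction on each side. The goal is to show that the relevant line bundle is the full $u^*(\mathbb L_h^\vee\otimes\mathbb L_{\ol h}^\vee)$, with no square root, so that the factor belongs in $q_{1,\phi}$. If this does not follow directly from the local chart, I would first try comparing both $\mathbb L$ classes on the stabilisation, pulling back via $u$ from $\ol{\mc M}_{\st(G_1)}$. A further point to handle is keeping track of the automorphism degrees of the gluing maps, which should cancel as in the $\ol{\mc M}_{g,n}$ case.
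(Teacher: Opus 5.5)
Your proposal is correct and follows the paper's own argument, which is just the Cartesian square of the preceding lemma plus the excess intersection formula, with the excess bundle read off edge by edge from Lemma \ref{lem: normal bundle} to give the factors $-u^*(\psi_h+\psi_{\ol h})$ and $-\tfrac12 u^*(\psi_h+\psi_{\ol h})$. The mixed case you flag as the main obstacle cannot occur: passing from $G$ to $G_1$ or $G_2$ does not change the ramification index of an edge that survives, and by condition $iii)$ of Definition \ref{def: prym structure} an edge has a ramified preimage exactly when it meets an exceptional vertex, so an edge shared by $G_1$ and $G_2$ is exceptional in both or in neither.
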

\begin{proof}
    The corollary follows from Lemma \ref{lem: normal bundle} and the Excess Intersection Formula.
\end{proof}
\begin{proposition}\label{prop: pullback by chi admit tautological kunneth decomp}
    Let $\phi$ be a Prym--structure and let $\chi_\phi : \ol{\mc R}_{\phi}' \rarr \overline\Rcal_{g;m}'$ be the corresponding glueing morphism. Then all classes in $\chi_\phi^*(\R\H^\bullet(\rgmbar'))$ admit a tautological Künneth decomposition.
\end{proposition}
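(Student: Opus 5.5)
Since the tautological ring is generated additively, it is enough to handle classes of the form $\chi_{\phi_2,*}(\alpha)$, where $\phi_2$ is a Prym structure and $\alpha$ is a monomial in $\psi$ and $\kappa$ classes on the vertex factors of $\ol{\mc R}'_{\phi_2}$. This is the Prym analogue of the additive generating set of Graber--Pandharipande. So the first step is to check that $\R^\bullet(\rgmbar')$, as defined via the system, is additively spanned by such decorated boundary pushforwards. I would prove this exactly as in \cite[Appendix A]{pandhairpande2003nontaut}: the span is preserved by pushforward along gluing, clutching and forgetful maps, and it is closed under products by Corollary \ref{cor: pullback boundary classes} and the projection formula. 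Here one uses that $\psi$ and $\kappa$ classes pull back under gluing maps to the corresponding vertex classes, and that forgetful pushforwards of $\psi$ monomials produce $\kappa$ classes.

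Next, fix $\phi_1=\phi$ and compute $\chi_{\phi_1}^*\chi_{\phi_2,*}(\alpha)$. The Cartesian square from the Lemma on generic $(\phi_1,\phi_2)$--structures, together with Lemma \ref{lem: normal bundle}, gives an excess intersection formula refining Corollary \ref{cor: pullback boundary classes}:
\[
\chi_{\phi_1}^*\chi_{\phi_2,*}(\alpha)=\sum_{\phi}\chi_{\phi,\phi_1,*}\bigl(\chi_{\phi,\phi_2}^*(\alpha)\cdot q_{1,\phi}\,q_{2,\phi}\bigr).
\]
The task is then to check that each summand has a tautological Künneth decomposition on $\ol{\mc R}'_{\phi_1}=\prod_v \ol{\mc R}'_v$, where each factor $\ol{\mc R}'_v$ is $\ol{\mc R}'_{g(v);n(v)}$, $\ol{\mc M}_{g(v),n(v)}$ or $\ol{\mc R}'_{g(v),2r(v);m(v)}$. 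This comes down to the following observations.
\begin{enumerate}
\item The map $\chi_{\phi,\phi_1}$ is a product over vertices $v$ of $G_1$ of gluing maps into $\ol{\mc R}'_v$, since $\phi$ specialises $\phi_1$ vertex by vertex (Definition \ref{def: Specialisation Prym}). For vertices $v$ of $V^{\tr}$ the corresponding specialisation is the same graph on both sheets, so the relevant factor is an ordinary stratum of $\ol{\mc M}_{g(v),n(v)}$.
\item The classes $q_{1,\phi}$ and $q_{2,\phi}$ are products of pullbacks of $\psi$ classes on vertex factors, so they split as products of vertex classes.
\item The pullback $\chi_{\phi,\phi_2}^*(\alpha)$ is again a product of $\psi$ and $\kappa$ classes on the vertices of $\phi$. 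Each $\kappa$ at a vertex of $\phi_2$ becomes a sum of $\kappa$'s over the vertices of $\phi$ mapping to it, and each $\psi$ stays a $\psi$.
\end{enumerate}
Expanding these products and grouping the vertices of $\phi$ according to which vertex of $G_1$ they specialise gives a finite sum of tensor products $\bigotimes_v \chi_{\phi_v,*}(\beta_v)$. Each factor is a decorated boundary pushforward in $\ol{\mc R}'_v$, hence tautological by the first step. Passing to cohomology via the cycle class map gives the statement.

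The main obstacle is bookkeeping at exceptional components and ramified edges. There are three points to watch:
\begin{enumerate}
\item The line bundles $M_e$ in Lemma \ref{lem: normal bundle} are only square roots of $u^*(\mathbb L_h^\vee\otimes\mathbb L_{\ol h}^\vee)$, so their first Chern classes must be written as $-\tfrac12 u^*(\psi_h+\psi_{\ol h})$ with rational coefficients. This is why the decomposition is over $\QQ$.
\item The $\psi$ classes pulled back from $\ol{\mc M}_{\st(G_1)}$ have to be compared with the $\psi$ classes at the corresponding points of the Prym factors. At a ramified half--edge, pulling back $\psi$ from the stabilisation and comparing with the cover introduces a factor of $2$, which I would track using the local model $t=\tau^2$ from the proof of Lemma \ref{lem: normal bundle}.
\item Factors $\ol{\mc R}'_{g(v),2r(v);m(v)}$ with ramification points appear, so the first step must be carried out for the whole system including $r>0$. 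The intrinsic definition of the system is designed to allow this.
\end{enumerate}
I would first establish the additive generation for the whole system by induction on $(g,r,m)$ and dimension, and only then run the excess intersection computation above.
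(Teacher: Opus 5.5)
Your proposal is correct and follows the paper's route. The paper's proof simply invokes Corollary~\ref{cor: pullback boundary classes} and transports the argument of \cite[Appendix A]{pandhairpande2003nontaut}, which is your scheme: additive generation by decorated strata, the excess intersection formula for $\chi_{\phi_1}^*\chi_{\phi_2,*}$, and vertex-by-vertex splitting, with your write-up making explicit the bookkeeping the paper leaves implicit (the square-root bundles $M_e$, the factor $2$ at ramified points, and running the generation step for the whole system including $r>0$).
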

\begin{proof}
    Using Corollary \ref{cor: pullback boundary classes}, the proof proceeds exactly as in \cite[Appendix A]{pandhairpande2003nontaut}. 
\end{proof}
\vskip 0.5em

\section{The Cohomology of \texorpdfstring{$\ol{\mc R}_{1;m}$}{}}\label{section: cohomology R1m}

\subsection{The moduli spaces \texorpdfstring{$\overline\Rcal_{1;m}$}{}}

We now gather some facts on the Chow ring $\CH^\bullet(\overline\Rcal_{1;m})$. The statements in this subsection serve as a starting point to our results on the Chow ring of $\CH^\bullet(\rgmbar)$ via the glueing maps described in Example \ref{ex: gluing and clutching maps}. 

\begin{notation}
    Let $\Gamma \subset \SL(2, \ZZ)$ be a congruence subgroup. We denote by $\Scal_k(\Gamma)$ the space of cusp forms of level $\Gamma$ and weight $k$. We also set $\Ecal_k(\Gamma)$ for the space of Eisenstein series of level $\Gamma$ and weight $k$.
\end{notation}

In order to better understand the cohomology $\H^\bullet(\overline\Rcal_{1;m})$, we follow standard methods which have already proven their usefulness for $\overline\Mcal_{1, n}$. The following result is foundational.
\begin{theorem}[Eichler--Shimura]
    Let $\Gamma \subset \SL(2, \ZZ)$ be a congruence subgroup. There exists a Hecke--equivariant isomorphism
    \[
    \overline{\Scal_k(\Gamma)} \oplus \Scal_k(\Gamma) \oplus \Ecal_k(\Gamma) \xrightarrow[]{\sim} \H^1(\Gamma, \Sym^{k -2}(\CC^2)).
    \]
    Here, $\Sym^{k -2}(\CC^2)$ is a $\Gamma$--module via the inclusion $\Gamma \subset \GL(2, \CC)$.
\end{theorem}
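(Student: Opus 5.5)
We will prove the Eichler--Shimura isomorphism through the cohomology of the open modular curve with coefficients in a local system. The construction has four parts: the period map, a dimension count, linear independence of the images, and Hecke equivariance. Throughout we take $k \ge 3$, and we first assume $\Gamma$ is torsion--free, with $-1 \notin \Gamma$ and all cusps regular. The general case reduces to this one by passing to a finite--index normal subgroup $\Gamma' \subset \Gamma$ and taking $\Gamma/\Gamma'$--invariants; this is harmless because the coefficients $\CC$ have characteristic zero.

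\emph{Step 1: the map.} Let $Y_\Gamma = \Gamma\backslash\HH$, with smooth compactification $X_\Gamma$ of genus $g_\Gamma$ and $c$ cusps. Let $\VV_{k-2}$ be the local system on $Y_\Gamma$ attached to $\Sym^{k-2}(\CC^2)$. Since $\HH$ is contractible, $\H^1(\Gamma, \Sym^{k-2}(\CC^2)) \cong \H^1(Y_\Gamma, \VV_{k-2})$. For a modular form $f$ of weight $k$, consider the closed $\Sym^{k-2}(\CC^2)$--valued $1$--form
\[
\omega_f = f(z)\,(zX+Y)^{k-2}\,dz .
\]
It transforms equivariantly under $\Gamma$. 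Fixing a base point $z_0$, the assignment $\gamma \mapsto \int_{z_0}^{\gamma z_0}\omega_f$ is a $1$--cocycle, and its class does not depend on $z_0$. This defines the map on $\Scal_k(\Gamma)$ and on $\Ecal_k(\Gamma)$. On $\overline{\Scal_k(\Gamma)}$ we use the complex conjugate forms $\overline{f(z)}\,(\bar zX+Y)^{k-2}\,d\bar z$. Each of these pieces is $\CC$--linear or antilinear as appropriate, so their sum is a well--defined map from the direct sum.

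\emph{Step 2: dimensions.} Since $k > 2$, we have $\H^0(Y_\Gamma, \VV_{k-2}) = 0$. The space $Y_\Gamma$ is homotopic to a graph, so $\H^2 = 0$ as well. The Euler characteristic of a local system is its rank times the Euler characteristic of the base, so
\[
\dim \H^1(Y_\Gamma, \VV_{k-2}) = -(k-1)\,\chi(Y_\Gamma) = (k-1)(2g_\Gamma-2+c).
\]
Riemann--Roch on $X_\Gamma$ gives $\dim \Scal_k(\Gamma) = (k-1)(g_\Gamma-1) + (\tfrac{k}{2}-1)c$. The Eisenstein space has $\dim \Ecal_k(\Gamma) = c$. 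Adding, $2\dim\Scal_k + \dim\Ecal_k = (k-1)(2g_\Gamma - 2 + c)$. Hence it suffices to prove injectivity.

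\emph{Step 3: injectivity, the main obstacle.} We split the target using the restriction map to the boundary,
\[
\H^1(Y_\Gamma,\VV_{k-2}) \longrightarrow \bigoplus_{\text{cusps}} \H^1(\Gamma_{\text{cusp}}, \Sym^{k-2}(\CC^2)).
\]
Its kernel is the parabolic cohomology $\H^1_!$. Cusp forms decay exponentially at the cusps, so their cocycles become coboundaries on the parabolic stabilisers; the same holds for the conjugate forms. Thus $\Scal_k \oplus \overline{\Scal_k}$ lands in $\H^1_!$.

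Injectivity on this part comes from the Petersson pairing. The pairing on $\H^1_!$ induced by cup product and the symplectic pairing on $\Sym^{k-2}(\CC^2)$ satisfies
\[
\langle [\omega_f], [\overline{\omega_g}] \rangle = c_k\,(f,g)_{\mathrm{Pet}}, \qquad c_k \neq 0,
\]
and $\langle [\omega_f],[\omega_g]\rangle = 0$ for type reasons. Nondegeneracy of the Petersson product then shows that the map is injective on $\Scal_k \oplus \overline{\Scal_k}$. Equivalently, one can invoke Deligne's pure Hodge structure on $\H^1_!$, of weight $k-1$ with Hodge types $(k-1,0)$ and $(0,k-1)$.

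For the Eisenstein part, we compute the constant terms of Eisenstein series. This shows that the composite $\Ecal_k \to \H^1 \to \bigoplus_{\text{cusps}} \H^1(\Gamma_{\text{cusp}},\Sym^{k-2}(\CC^2))$ is an isomorphism: each boundary group is one--dimensional, spanned by the image of the Eisenstein series that is nonvanishing at that cusp. Combining both parts, the full map is injective, and by Step 2 it is an isomorphism. The delicate points are the nonvanishing of $c_k$ and the constant--term computation, together with the odd--weight and irregular--cusp caveats. These are exactly what the reduction in the first paragraph is designed to avoid.

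\emph{Step 4: Hecke equivariance.} A Hecke operator $T = \Gamma\alpha\Gamma$ acts on modular forms via the slash operator, and on cocycles via the double--coset corestriction--restriction formula with $\alpha$ acting on $\Sym^{k-2}(\CC^2)$. The identity $\omega_{f|_k\alpha} = \alpha^{*}\omega_f$, together with a change of variables in the period integral, shows that the period map intertwines the two actions.
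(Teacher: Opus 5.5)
The paper gives no proof of this statement. It quotes Eichler--Shimura as a classical black box and applies it only to $\Gamma_1(2)$ in weight $\ge 8$. Your outline is the standard proof: periods of $f(z)(zX+Y)^{k-2}\,dz$, an Euler characteristic count, the Petersson pairing on parabolic cohomology, and constant terms of Eisenstein series. For $k\ge 3$ it is sound.

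Your reduction to $\Gamma'=\Gamma\cap\Gamma(N)$ with $N\ge 3$ is exactly what the paper's application needs, since $\Gamma_1(2)$ contains $-\Id_2$ and has an elliptic point. To finish the reduction, you should state two things explicitly:
\begin{itemize}
\item The period map is $\Gamma/\Gamma'$--equivariant; this is the computation of your Step 4 with $\alpha\in\Gamma$.
\item $\Ecal_k(\Gamma)=\Ecal_k(\Gamma')^{\Gamma}$; this follows from the $\Gamma$--invariance of the Petersson product.
\end{itemize}

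Two points need correcting.

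First, the statement as written includes $k=2$, which you exclude, and there your Steps 2 and 3 do not go through verbatim. In that case $\H^0(Y_\Gamma,\CC)=\CC$ and $\dim\Ecal_2(\Gamma)=c-1$. Moreover, the restriction $\H^1(Y_\Gamma,\CC)\to\bigoplus_{\text{cusps}}\H^1(\Gamma_{\text{cusp}},\CC)\cong\CC^c$ is not surjective: its image is the kernel of the sum map to $\H^2_c(Y_\Gamma,\CC)\cong\CC$. For $k\ge 3$ surjectivity does hold, because $\H^2_c(Y_\Gamma,\VV_{k-2})\cong\H^0(Y_\Gamma,\VV_{k-2})^\vee=0$. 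So for $k=2$ you must show that $\Ecal_2$ maps isomorphically onto that hyperplane, with $\Ecal_2$ defined via Eisenstein series since the Petersson product diverges. This is routine but is missing.

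Second, the delicate point in Step 3 is not $c_k\neq 0$. That constant is explicit: it comes from $\langle (zX+Y)^{k-2},(\bar zX+Y)^{k-2}\rangle$ being a nonzero multiple of $(z-\bar z)^{k-2}$, together with $dz\wedge d\bar z=-2i\,dx\wedge dy$. The real issue is identifying the topological pairing on $\H^1_!$ with $\int_{Y_\Gamma}\omega_f\wedge\overline{\omega_g}$, which requires a compactly supported representative of $[\omega_f]$. Near each cusp $\omega_f=dF$, where $F(z)=\int_{i\infty}^{z}\omega_f$ is an equivariant, exponentially decaying primitive. Then $\omega_f-d(\rho F)$, with $\rho$ a cutoff, works, and Stokes together with the decay kills the correction term.

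Finally, the paper actually uses more than this statement: it needs $\Scal_k$ to contribute holomorphic forms, i.e.\ classes in $\H^{k-1,0}$ of the compactified space. Your Petersson argument does not give Hodge types. The alternative you mention, Deligne's pure Hodge structure on $\H^1_!$, does.
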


Consider now the congruence subgroup
\[
\Gamma_1(2):= \left\{
\begin{pmatrix}
    a & b \\
    c & d
\end{pmatrix} \in \SL(2, \ZZ)\colon
\begin{pmatrix}
    a & b \\
    c & d
\end{pmatrix} \equiv \begin{pmatrix}
    1 & * \\
    0 & 1
\end{pmatrix} \Mod{2}
\right\},
\]
and notice that $X_1(2) := \HH/\Gamma_1(2) \cong \Rcal_{1;1}$. Motivated by the Eichler--Shimura Isomorphism, we now state a basic lemma on cusp forms of level $\Gamma_1(2)$. 
\begin{lemma}\label{lemma: first nonzero cusp form for Gamma_1(2)}
    Let $k \in \ZZ_{\ge 0}$ be an even number. The dimension of the space of weight $k$ cusp forms of level $\Gamma_1(2)$ is given by the formula:
    \[
    \dim \Scal_k(\Gamma_1(2)) = \begin{cases}
        \left\lfloor \frac{k}{4} \right\rfloor- 1 &\text{for }k \ge 8 \\
        0 &\text{for $k \le 6$.}
    \end{cases}
    \]
    In particular, the first non--zero cusp form of even weight and level $\Gamma_1(2)$ appears in weight 8.
\end{lemma}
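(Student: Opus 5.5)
The plan is to identify the group with a classical one and then read the dimension off the standard Riemann--Roch (valence) formula for spaces of cusp forms. First I will observe that $\Gamma_1(2)=\Gamma_0(2)$. If $\begin{pmatrix} a & b\\ c& d\end{pmatrix}\in\SL(2,\ZZ)$ has $c$ even, then $ad\equiv 1 \Mod{2}$, so $a\equiv d\equiv 1\Mod 2$ automatically. Since $-I\in\Gamma_0(2)$, even weights are the relevant ones, and the image $\overline{\Gamma_0(2)}\subset\mathrm{PSL}(2,\ZZ)$ has index $3$.

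Next I will compute the invariants of $X_1(2)=X_0(2)$:
\begin{itemize}
\item the number of cusps is $\varepsilon_\infty=2$ (at $\infty$ and $0$, of widths $1$ and $2$);
\item the number of elliptic points of order $2$ is $\varepsilon_2=\prod_{p\mid 2}\bigl(1+\left(\tfrac{-4}{p}\right)\bigr)=1$;
\item the number of elliptic points of order $3$ is $\varepsilon_3=\prod_{p\mid 2}\bigl(1+\left(\tfrac{-3}{p}\right)\bigr)=0$.
\end{itemize}
The genus formula $g=1+\frac{3}{12}-\frac{1}{4}-0-\frac{2}{2}=0$ then shows that $X_0(2)$ is rational. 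This is consistent with $\Rcal_{1;1}$ being a rational curve.

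For even $k\ge 4$ I will apply the standard formula
\[
\dim\Scal_k(\Gamma)=(k-1)(g-1)+\left\lfloor\tfrac{k}{4}\right\rfloor\varepsilon_2+\left\lfloor\tfrac{k}{3}\right\rfloor\varepsilon_3+\left(\tfrac{k}{2}-1\right)\varepsilon_\infty .
\]
With the invariants above it gives $-(k-1)+\lfloor k/4\rfloor+(k-2)=\lfloor k/4\rfloor-1$. For $k=0$ and $k=2$ I will argue directly. There are no weight $0$ cusp forms, since the only holomorphic functions on the compact curve are constants. In weight $2$, $\dim\Scal_2=g=0$. Evaluating the formula gives $\lfloor k/4\rfloor-1=0$ for $k=4,6$, and the value is $\ge 1$ exactly when $k\ge 8$; this yields both cases of the statement. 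A sanity check at $k=8$ is that the unique cusp form, up to scaling, is $(\eta(\tau)\eta(2\tau))^8$.

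As a cross-check independent of the general formula, I will use the structure of the graded ring $M_\bullet(\Gamma_0(2))=\CC[E_2^{(2)},E_4]$. Here $E_2^{(2)}(\tau)=2E_2(2\tau)-E_2(\tau)$ has weight $2$, and one takes a weight $4$ form independent of $(E_2^{(2)})^2$. This gives $\dim M_k=\lfloor k/4\rfloor+1$ for even $k\ge 0$, and subtracting the $2$ Eisenstein series (one per cusp, for $k\ge 4$) or $1$ (for $k=2$) recovers the claim. I expect no real obstacle. The only delicate points are the small weights where the general formula does not apply, and checking that $\Gamma_1(2)$ has no irregular cusps, which is automatic since $-I$ lies in the group.
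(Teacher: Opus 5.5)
Your proposal is correct and takes the same route as the paper: both plug $\varepsilon_2=1$, $\varepsilon_3=0$, $\varepsilon_\infty=2$ and genus $0$ into the standard dimension formula (Diamond--Shurman, Theorem 3.5.1). Your identification $\Gamma_1(2)=\Gamma_0(2)$, your genus computation, and your separate treatment of $k=0,2$ only make explicit what the paper leaves to the citation and to the rationality of $\Rcal_{1;1}$.
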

\begin{proof}
    The Lemma follows by combining \cite[Theorem 3.5.1]{diamond2005modularForms} and \cite[Exercise 3.1.5]{diamond2005modularForms}, which yield $\varepsilon_2 = 1$, $\varepsilon_3 = 0$ and $\varepsilon_\infty = 2$. Here, $\varepsilon_k$ stands for the number of elliptic points of period $k$ for $k = 2, 3$, and $\varepsilon_\infty$ is the number of cusps. We also observe that $\Rcal_{1;1} \cong \HH/\Gamma_1(2)$ is rational so the genus $g(\HH/\Gamma_1(2)) = 0$. 
\end{proof}

\begin{notation}
    For the rest of this subsection, we let $\nu : \Ecal \rarr \Rcal_{1;1}$ be the universal elliptic curve, $\VV := \R^1\nu_*\underline{\QQ}$, and $\VV_k := \Sym^k\,\VV$.
\end{notation}
\begin{lemma}\label{lemma: odd coh vanishing for loc system V_k on R 1,1}
    For $k$ odd, we have $\H^\bullet(\Rcal_{1;1}, \VV_k) = 0$.
\end{lemma}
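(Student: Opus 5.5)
The plan is to use the elliptic involution $-1 \in \SL(2,\ZZ)$, which lies in $\Gamma_1(2)$: since $-1 \equiv 1 \Mod 2$, the matrix $-\Id$ satisfies the defining congruence. Geometrically, every point $[E, \eta]$ of $\Rcal_{1;1}$ carries the automorphism $[-1]$ of $E$, and it fixes the $2$--torsion point $\eta$. Hence the generic stabiliser of the stack $\Rcal_{1;1}$ contains $\ZZ/2 = \langle -1\rangle$, and this element acts on the fibres of $\VV = \R^1\nu_*\underline{\QQ}$ by $-1$. It therefore acts on $\VV_k = \Sym^k \VV$ by $(-1)^k$, which is $-1$ when $k$ is odd.

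Next, write the cohomology as group cohomology. Since $\Rcal_{1;1} \cong [\HH/\Gamma_1(2)]$ as an orbifold, and $\HH$ is contractible, we get
\[
\H^\bullet(\Rcal_{1;1}, \VV_k) \cong \H^\bullet(\Gamma_1(2), \Sym^k(\QQ^2)).
\]
The element $-\Id$ is central in $\Gamma_1(2)$. Over $\QQ$, the Hochschild--Serre spectral sequence for the central subgroup $\{\pm \Id\}$ (equivalently, taking invariants, since $|\{\pm\Id\}|=2$ is invertible in $\QQ$) gives
\[
\H^\bullet(\Gamma_1(2), M) \cong \H^\bullet\bigl(\Gamma_1(2)/\{\pm\Id\}, M^{\{\pm \Id\}}\bigr).
\]
For $M = \Sym^k(\QQ^2)$ with $k$ odd, $-\Id$ acts by $-1$, so $M^{\{\pm\Id\}} = 0$ and the cohomology vanishes.

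An equivalent argument avoids the spectral sequence. A central element $z$ acts trivially on $\H^\bullet(\Gamma, M)$ by inner automorphism invariance, yet it also acts through its action on $M$, which is $-1$. So $2\cdot\H^\bullet = 0$, and with $\QQ$ coefficients this forces $\H^\bullet = 0$.

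The only step requiring care is the identification of the stack cohomology of $\Rcal_{1;1}$ with the group cohomology of $\Gamma_1(2)$ with coefficients in the standard representation. Concretely, one must check that the local system $\VV$ corresponds to the standard (dual) representation of $\Gamma_1(2)\subset\SL(2,\ZZ)$, including the generic $\ZZ/2$ automorphism. I would justify this by pulling back the corresponding statement for $\Mcal_{1,1} \cong [\HH/\SL(2,\ZZ)]$ along the finite étale cover $\Rcal_{1;1}\rarr\Mcal_{1,1}$. That cover corresponds to the finite-index subgroup $\Gamma_1(2)\subset\SL(2,\ZZ)$, and $\VV$ is pulled back from $\Mcal_{1,1}$, so the identification follows. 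Equivalently, one can argue directly on the stack that $[-1]$ acts on $\VV_k$ by $(-1)^k$ and invoke the standard fact that an automorphism in the generic inertia acting by a nontrivial character kills rational cohomology.
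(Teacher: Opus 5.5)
Your proposal is correct and takes essentially the same approach as the paper. The central element $-\Id \in \Gamma_1(2)$ acts trivially on $\H^\bullet(\Gamma_1(2), \Sym^k)$ but by $(-1)^k = -1$ on the coefficients, which forces every class to be $2$--torsion and hence zero with rational (or complex) coefficients. Your Hochschild--Serre variant and your justification of the identification with group cohomology via pullback from $\Mcal_{1,1}$ are details the paper takes for granted.
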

\begin{proof}
    We need to show that $\H^\bullet(\Gamma_1(2), \Sym^k(\CC^2)) = 0$. To this end, recall that, for a group $G$, and functors $\H^q(G, -)$ are defined as the left derived functors of
    \[
    (-)^G : G\mathrm{-Mod} \rarr \mathrm{Ab}, \quad M \mapsto M^G:= \{m \in M \colon g\cdot m = m \text{ for all } g \in G\}.
    \]
    On one hand, since $-\Id_2 \in \Gamma_1(2)$, it acts as the identity on $\H^\bullet(\Gamma_1(2), \Sym^k(\CC^2))$. On the other, since $k$ is odd, $-\Id_2$ acts on $\Sym^k(\CC^2)$ by $v_1\dots v_k \mapsto -v_1\dots v_k$. It follows that $\alpha = - \alpha$ for all $\alpha \in \H^\bullet(\Gamma_1(2), \Sym^k(\CC^2))$ implying the desired vanishing.
\end{proof}

Let $\pi : \Rcal_{1;m}^\rt \rarr \Rcal_{1;1}$ be the forgetful map from the space $\Rcal_{1;m}^\rt$ of elliptic Prym curves with rational tails. Recall that a genus $g$ stable curve $C$ is said to have rational tails if it has component of geometric genus $g$. We see that $\Rcal_{1;m}^\rt$ fits into the following cartesian diagram:
\[\begin{tikzcd}
	{\Rcal_{1;m}^\rt} & {\Mcal_{1,m}^\rt} \\
	{\Rcal_{1;1}} & {\Mcal_{1, 1}}
	\arrow[from=1-1, to=1-2]
	\arrow["\pi"', from=1-1, to=2-1]
	\arrow["\ulcorner"{anchor=center, pos=0.125}, draw=none, from=1-1, to=2-2]
	\arrow["\pi'", from=1-2, to=2-2]
	\arrow["u", from=2-1, to=2-2]
\end{tikzcd}\]

We next set to prove the following proposition.
\begin{proposition}\label{prop: key prop for even and odd coh of R1,m}
    \hspace{-50pt}
    \begin{enumerate}[label=$\roman*)$]
        \item Let $F_m$ be the fibre of $\pi : \Rcal_{1;m}^\rt \rarr \Rcal_{1;1}$. If $k$ is even, then
        \[
        \H^k(\Rcal^\rt_{1;m}) \cong \H^0(\Rcal_{1;1}, \R^k\pi_*\underline{\QQ}) = \H^k(F_m)^{\Gamma_1(2)}.
        \]
        \item If $k$ is odd, then
        \[
        \H^k(\Rcal^\rt_{1;m}) \cong \H^1(\Rcal_{1;1}, \R^{k-1}\pi_*\underline{\QQ}).
        \]
    \end{enumerate}
\end{proposition}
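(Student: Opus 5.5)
We will use the Leray spectral sequence for $\pi : \Rcal_{1;m}^\rt \rarr \Rcal_{1;1}$,
\[
E_2^{p,q} = \H^p(\Rcal_{1;1}, \R^q\pi_*\underline{\QQ}) \Longrightarrow \H^{p+q}(\Rcal_{1;m}^\rt).
\]
The base $\Rcal_{1;1}\cong \HH/\Gamma_1(2)$ is a non--compact curve (an orbifold quotient of $\HH$). With rational coefficients it has cohomological dimension $1$ for local systems, and its rational cohomology is group cohomology of $\Gamma_1(2)$. Hence $E_2^{p,q}=0$ for $p\ge 2$. The spectral sequence therefore has only two columns, $p=0,1$, and degenerates at $E_2$. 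For every $k$ this gives a short exact sequence
\[
0\rarr \H^1(\Rcal_{1;1},\R^{k-1}\pi_*\underline{\QQ}) \rarr \H^k(\Rcal^\rt_{1;m}) \rarr \H^0(\Rcal_{1;1},\R^k\pi_*\underline{\QQ})\rarr 0.
\]
So it suffices to show: for $k$ even, $\H^1(\Rcal_{1;1},\R^{k-1}\pi_*\underline{\QQ})=0$; for $k$ odd, $\H^0(\Rcal_{1;1},\R^k\pi_*\underline{\QQ})=0$. In both cases the claim is that $\R^q\pi_*\underline{\QQ}$ contributes nothing when $q$ is odd. The identification $\H^0(\Rcal_{1;1},\R^k\pi_*\underline{\QQ})=\H^k(F_m)^{\Gamma_1(2)}$ is immediate, since global sections of a local system on $\HH/\Gamma_1(2)$ are the monodromy invariants of the fibre.

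The key step is to understand the local systems $\R^q\pi_*\underline{\QQ}$. The square is cartesian, so $\R^q\pi_*\underline{\QQ}=u^*\R^q\pi'_*\underline{\QQ}$. The fibre $F_m$ is the same as the fibre of $\Mcal^\rt_{1,m}\rarr\Mcal_{1,1}$ over $[E,0]$, i.e.\ a configuration-type space built from $E$ (a Fulton--MacPherson style space of $m-1$ points on $E$ with rational tails). I would show, as in the $\Mcal_{1,n}$ literature (Getzler, Petersen), that $\R^q\pi'_*\underline{\QQ}$ is a successive extension of Tate twists of the local systems $\VV_j$ with $j\equiv q \pmod 2$. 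The reason is that the cohomology of $E^{n}$ and of its diagonal strata is generated by $\H^1(E)=\VV$ and by classes of even degree, such as diagonals and $\H^2(E)$. A degree-$q$ piece is then a tensor power of $\VV$ times even classes, and this decomposes via $\VV^{\otimes a}$ into $\VV_j$ with $j\equiv a\equiv q \pmod 2$. A cleaner way to get the parity statement without the full decomposition is the elliptic involution. The element $-\Id_2\in\Gamma_1(2)$ acts on the fibre $F_m$ through the automorphism $-1$ of $E$, and on $\H^q(F_m)$ it acts by $(-1)^q$. This holds because the identity component of the relevant group action is a homotopy and the $\H^1$ generators change sign.

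Once the fibre action is known, the vanishing follows exactly as in Lemma \ref{lemma: odd coh vanishing for loc system V_k on R 1,1}. For $q$ odd, $-\Id_2$ is central in $\Gamma_1(2)$, acts trivially on group cohomology, and acts by $-1$ on the coefficients $\H^q(F_m)$. So $\H^p(\Gamma_1(2),\H^q(F_m))=0$ for all $p$; in particular both $\H^0$ and $\H^1$ vanish, as required. The main obstacle is to justify rigorously that $-\Id_2$ acts by $(-1)^q$ on $\H^q(F_m)$. To do this I would use the filtration of $F_m$ by strata of the rational-tails configuration space and the associated Gysin/weight spectral sequence, whose pieces are cohomologies of $E^a$ twisted by Tate classes. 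On $\H^\bullet(E^a)=\Lambda^\bullet(\H^1(E)^{\oplus a})$ the involution acts by $(-1)^{\deg}$, and Gysin maps shift degree by even amounts. Hence the sign $(-1)^q$ is preserved on each graded piece and therefore on $\H^q(F_m)$.
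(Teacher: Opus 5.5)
Your proposal is correct and is essentially the paper's proof: the Leray spectral sequence for $\pi$ has only the columns $p=0,1$, and all cohomology of $\R^q\pi_*\underline{\QQ}$ vanishes for $q$ odd because of the central element $-\Id_2$. The one difference is that you get the parity directly from the action of $[-1]$ on $\H^q(F_m)$, rather than from a splitting $\R^q\pi_*\underline{\QQ}=\bigoplus_j\VV_{k_j}(-n_j)$ with $k_j+2n_j=q$; this is slightly more economical, since it needs no semisimplicity or classification of representations.

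To make the sign computation rigorous, use the iterated blow-up (Fulton--MacPherson) description of $F_m$ as obtained from $E^{m-1}$ along $[-1]$-invariant centres, as in the proof of Theorem \ref{thm: key basic result on R1m} $iii)$, so that $\H^q(F_m)$ splits equivariantly into pieces $\H^{q-2i}(Z)$ with $Z$ of the same type, and then induct. Do not filter by open strata: on open configuration spaces of $E\setminus 0$, and on $\Mcal_{0,n}$, the involution acts by $(-1)^{\text{weight}}$, not $(-1)^{\deg}$, and these genuinely differ.
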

\begin{proof}
    By Cohomology and Base Change, we have canonical isomorphisms of local systems
\begin{align}\label{eq: coh base change to R1m}
\R^q\pi_*\underline{\QQ} \cong u^*\R^q\pi'_*\underline{\QQ}.
\end{align}
The classification of irreducible representations of $\SL(2, \ZZ)$ together with (\ref{eq: coh base change to R1m}) imply that $\R^p\pi_*\underline{\QQ}$ splits as a direct sum into irreducible local systems
\begin{align*}
\R^q\pi_*\underline{\QQ} = \bigoplus_j \VV_{k_j}(-n_j)
\end{align*}
where $k_j + 2n_j = q$ for all $j$. In particular, if $q$ is odd (respectively even), $\R^q\pi_*\underline{\QQ}$ contains only local systems $\VV_k$ with $k$ odd (respectively even). Based on Lemma \ref{lemma: odd coh vanishing for loc system V_k on R 1,1} and since $\Rcal_{1;1}$ has dimension 1, the Leray spectral sequence for $\pi : \Rcal_{1;m}^\rt \rarr \Rcal_{1;1}$ degenerates on the second page. The proposition now follows.
\end{proof}

We have thus arrived at the main result of this section.
\begin{theorem}\label{thm: key basic result on R1m}
    \hspace{-50pt}
    \begin{enumerate}[label=$\roman*)$]
        \item The Chow and cohomology rings of $\overline\Rcal_{1;m}$ are tautological for $m \le 6$.
        \item For all $m \ge 7$ odd we have that $\H^{m, 0}(\overline{\Rcal}_{1;m}) \ne 0$.
        \item All the even cohomology of $\overline{\Rcal}_{1;m}$ is tautological.
    \end{enumerate}
\end{theorem}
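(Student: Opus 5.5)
The plan follows the strategy of Petersen for $\overline\Mcal_{1,n}$ (and Getzler's description of $\H^\bullet(\overline\Mcal_{1,n})$), transported to level $\Gamma_1(2)$.

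\emph{First step: the open part.} Stratify $\overline\Rcal_{1;m}$ by the Prym structures on dual graphs, as described in Section \ref{section: preliminaries}. The open stratum with rational tails is $\Rcal_{1;m}^\rt$, and its complement consists of loci glued from products of genus~$0$ spaces: either $\overline\Mcal_{0,n}$, or genus~$0$ Prym spaces $\overline\Rcal_{0,2r;n}$, whose cohomology is tautological by the usual genus-$0$ arguments. The cohomology of the rational-tails locus is computed with Proposition \ref{prop: key prop for even and odd coh of R1,m}. In even degree $k$, $\H^k(\Rcal^\rt_{1;m}) = \H^k(F_m)^{\Gamma_1(2)}$. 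The fibre $F_m$ is a fibre of $\Mcal_{1,m}^\rt\to\Mcal_{1,1}$, namely a Fulton--MacPherson-type configuration space on $E$. Its invariant cohomology is spanned by diagonal classes together with $\psi$/$\lambda$-type classes. Hence every even class on $\Rcal^\rt_{1;m}$ is the restriction of a tautological class. This uses only invariants of $\VV_{k}$ with $k=0$ in $\H^0(\Rcal_{1;1},\VV_k)$, and these vanish for $k>0$.

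\emph{Odd part and part (ii).} In odd degree we get $\H^1(\Rcal_{1;1},\R^{k-1}\pi_*\QQ)$. By the splitting $\R^{k-1}\pi_*\QQ=\bigoplus\VV_{k_j}(-n_j)$, this reduces to $\H^1(\Rcal_{1;1},\VV_{2a})$. By Eichler--Shimura, this group is $\overline{\Scal_{2a+2}}\oplus\Scal_{2a+2}\oplus\Ecal_{2a+2}$ for $\Gamma_1(2)$.
\begin{itemize}
\item For (ii), note that $\VV_{m-1}$ occurs in $\R^{m-1}\pi_*\QQ$ exactly when $m-1$ is even, that is, when $m$ is odd. It occurs as the tensor product of the $\H^1$'s of $m-1$ copies of $E$, antisymmetrised as in Getzler's and Petersen's treatment of $\overline\Mcal_{1,11}$. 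The resulting weight-$(m+1)$ cusp form contributes a holomorphic $(m,0)$-form. Since $m+1\ge 8$, this space is nonzero by Lemma \ref{lemma: first nonzero cusp form for Gamma_1(2)}, and pure Hodge type $(m,0)$ classes extend to the smooth compactification because $\H^{m,0}$ is a birational invariant.
\item For (i), with $m\le 6$, every cusp space that occurs, of weight $\le m+1\le 7$, vanishes. The only remaining contributions are Eisenstein classes, which are mixed and killed by, or coming from, the boundary.
\end{itemize}

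\emph{Assembly.} Consider the long exact sequence
\[
\H^{k-2}_{\partial}\to \H^k(\overline\Rcal_{1;m})\to \H^k(\Rcal^\rt_{1;m}).
\]
Since $\overline\Rcal_{1;m}$ is smooth and proper (as a stack), $\H^k$ is pure. Its image in $\H^k(\Rcal^\rt_{1;m})$ is therefore the weight-$k$ part. In even degree, I showed above that this part is tautological. The boundary contributions are pushforwards from strata products of genus-$0$ Prym spaces and of $\overline\Rcal_{1;n}$ or $\overline\Mcal_{1,n}$ with $n<m$. By induction on $m$, and using gluing maps from Example \ref{ex: gluing and clutching maps}, which preserve tautologicality, these are tautological. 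There is one caveat: boundary strata of the form $\overline\Rcal_{1;n}\times\overline\Mcal_{0,\ast}$ may carry odd$\otimes$odd Künneth pieces landing in even degree. However, every boundary stratum has only one genus-$1$ vertex, so odd classes never pair up, and (iii) follows. For $m\le 6$, the odd cohomology also vanishes, since it is pure of weight $k$ and cusp forms are absent. Combined with a Chow-level version via the Chow–Künneth property of genus $0$ strata and of $\Rcal_{1;1}$, a rational curve, this gives (i).

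The main obstacle is the careful identification of $\H^k(F_m)^{\Gamma_1(2)}$ with restrictions of tautological classes. A second difficulty is checking that the Eisenstein part contributes only boundary/non-pure weights. I would handle this exactly as in Petersen's ``cohomology of local systems on $\Mcal_{1,n}$'', replacing $\SL(2,\ZZ)$ by $\Gamma_1(2)$ and using that $\Gamma_1(2)$ has two cusps.
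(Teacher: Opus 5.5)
You handle (ii) and (iii) essentially as the paper does. You run the Leray spectral sequence for $\Rcal^\rt_{1;m}\to\Rcal_{1;1}=\HH/\Gamma_1(2)$ and note that the $\Gamma_1(2)$-invariants in $\R^k\pi_*\QQ$ are only the $\VV_0$-part, which is the paper's Claim $[\H^1(E)^{\otimes 2}]^{\Gamma_1(2)}=\wedge^2\H^1(E)$. You then use excision against the genus-$0$ strata, and Eichler--Shimura with the first even-weight cusp form of level $\Gamma_1(2)$ in weight $8$, fed through the $\VV_{m-1}$-summand.

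Two small repairs are needed there.
\begin{itemize}
\item Birational invariance of $\H^{m,0}$ says nothing about forms on the non-proper $\Rcal^\rt_{1;m}$. The correct reason the cusp classes lift is one you state yourself later. The cuspidal part of $\H^1(\Rcal_{1;1},\VV_{m-1})$ is pure of weight $m$, and $W_m\H^m(\Rcal^\rt_{1;m})$ is the image of $\H^m(\overline\Rcal_{1;m})$. Equivalently, $f(\tau)\,d\tau\wedge dz_1\wedge\cdots\wedge dz_{m-1}$ extends because $f$ vanishes at both cusps.
\item In the assembly step you need neither an induction on $m$ nor the caveat about $\overline\Rcal_{1;n}\times\overline\Mcal_{0,*}$. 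As you observed in your first step, the complement of $\Rcal^\rt_{1;m}$ consists only of strata glued from genus-$0$ spaces.
\end{itemize}

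The genuine gap is the Chow half of (i). The paper does not argue (i) at all; it cites Krug's thesis. Your argument does give the cohomological half of (i). An odd class of weight $k$ would have to be cuspidal of even weight at most $m+1\le 7$, and $\Scal_k(\Gamma_1(2))=0$ for even $k\le 6$; even classes are covered by (iii).

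However, every tool you use is purely cohomological: Leray, weights, Eichler--Shimura. None has a Chow counterpart. Consider the excision sequence $\CH_\bullet(\partial)\to\CH_\bullet(\overline\Rcal_{1;m})\to\CH_\bullet(\Rcal_{1;m})\to 0$, or the same sequence with $\Rcal^\rt_{1;m}$. Chow--K\"unneth for genus-$0$ strata controls only the left-hand term. The right-hand term is the whole difficulty, and rationality of $\Rcal_{1;1}$ does not control it. The fibres are configuration spaces of an elliptic curve, whose Chow groups are nowhere near tautological; already $\CH^1(E)$ contains $E(\CC)$.

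What is needed is a geometric input showing that $\CH^{>0}(\Rcal_{1;m})_\QQ$ vanishes, or is tautological, for $m\le 6$. This could come from an explicit model, or from a stratification into pieces with trivial Chow groups, in the spirit of Belorousski's treatment of $\Mcal_{1,n}$ for $n\le 10$. That input is exactly what the citation of Krug supplies. As written, ``combined with a Chow-level version'' is an assertion, not an argument.
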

\begin{proof}
    Part $i)$ follows from \cite[Chapter 4]{krug2012thesisSpinPrym}.

    \vskip 0.5em
    Part $ii)$ follows from the Eichler--Shimura isomorphism for the congruence subgroup $\Gamma_1(2)$ together with Lemma \ref{lemma: first nonzero cusp form for Gamma_1(2)} and Proposition \ref{prop: key prop for even and odd coh of R1,m}. Indeed, by the usual dictionary between representations and local systems, there exists a canonical isomorphism $\H^1(\Rcal_{1;1}, \VV_k) \cong \H^1(\Gamma_1(2), \Sym^k(\CC^2))$. The map $\pi : \Rcal_{1;m}^\rt \rarr \Rcal_{1;1}$ factors through the $(m-1)$--fold product of the universal elliptic curve $\Rcal_{1;m}^\rt \xrightarrow[]{}\Ecal^{m - 1} \xrightarrow[]{\nu^{m -1}} \Rcal_{1;1}$. Since $\Rcal_{1;1}$ is affine, we have an isomorphism $\H^\bullet(\Ecal^{m-1}) \cong \H^\bullet(E^{m -1})$ where $[E] \in \Rcal_{1;1}$ is an elliptic curve. By Künneth's formula, the local system $\R^k\nu^{m - 1}_*\underline{\QQ}$ can be written in terms of tensor powers $\VV^{\otimes l}$. Now, note that the tensor powers $\VV^{\otimes l}$ admit decompositions in terms of symmetric powers and that there appears only one copy of $\VV_k$ in the decomposition of $\VV^{\otimes k}$. This implies that $\H^1(\Rcal_{1;1}, \VV_{k -1})$ appears as a direct summand in $\H^1(\Rcal_{1;1}, \R^{k -1}\pi_*\underline{\QQ})$. By Lemma \ref{lemma: first nonzero cusp form for Gamma_1(2)}, the first non--zero cusp form of level $\Gamma_1(2)$ and even weight appears in weight 8. The Eichler--Shimura Isomorphism finishes the proof of (2).

    \vskip 0.5em
    To address $iii)$, we closely follow \cite{petersen2014tautGenus1}. Direct computation gives the following claim on the $\Gamma_1(2)$--invariant part of $\H^1(E)^{\otimes 2}$.
    \begin{claim}
        The $\Gamma_1(2)$--invariant part of $\H^1(E)^{\otimes 2}$ is
        \[
        \left[\H^1(E)^{\otimes 2}\right]^{\Gamma_1(2)} = \wedge^2\,\H^1(E).
        \]
    \end{claim}

    \vskip 0.5em
    From the Claim we conclude that, for $k$ even, the $\Gamma_1(2)$--invariant part of $\H^1(E)^{\otimes k}$ is generated by $\left(\wedge^2\, \H^1(E)\right)^{\otimes k/2}$ together with translates under the $\Sfrak_{k}$--action.

    \vskip 0.5em
    We also point out the important basic fact that the 1--dimensional subspace $\wedge^2\, \H^1(E) $ of $ \H^1(E)^{\otimes 2}$ is generated by the restriction of the class of the diagonal in $\H^2(E \times E)$. In conclusion, the invariant part $\H^\bullet(E^{m - 1})^{\Gamma_1(2)}$ is generated by classes of diagonals and generators of $\H^2(E^{m - 1})$. Usual methods which parallel \cite{petersen2014tautGenus1} allow us to conclude that $\H^\bullet(E^m / E)$ is generated by classes of diagonals, where $E$ acts diagonally on $E^m$.

    \vskip 0.5em
    Let now $\text{FM}(E, n)$ denote the Fulton--MacPherson compactification of $E^n$ so that there exists an isomorphism
    \[
    F_m \cong \text{FM}(E, m)/E.
    \]
    The cohomology of $\text{FM}(E, m)$ is generated over the cohomology of $E^m$ by classes of exceptional divisors. Since we are only blowing up $\Gamma_1(2)$--invariant loci, the action of $\Gamma_1(2)$ extends from $\H^\bullet(E^m)$ to $\H^\bullet(\text{FM}(E, m))$ in the obvious way, which is to say, it is trivial on each exceptional component.

    \vskip 0.5em
    Thus, we have shown that the $\Gamma_1(2)$--invariants of $\H^\bullet(F_m)$ are generated as an algebra by the $\Gamma_1(2)$--invariants of $\H^\bullet(E^{m -1})$. Applying Proposition \ref{prop: key prop for even and odd coh of R1,m}, we find that $\H^{2\bullet}(\Rcal_{1;m}^\rt)$ is generated by boundary cycles. To finish the proof, recall that we the long exact sequence arising from excision:
    \[
    \H^{k - 2}(\overline\Rcal_{1;m} \setminus \Rcal_{1;m}^{\rt}) \rarr \H^k(\overline\Rcal_{1;m}) \rarr \H^k(\Rcal_{1;m}^\rt).
    \]
    The components of the complement $\overline{\Rcal}_{1;m} \setminus\Rcal_{1;m}^\rt$ are given by moduli spaces of genus 0 Prym curves, and the cohomology of these spaces is generated by boundary classes, concluding the proof of the Theorem.
\end{proof} 

Using the gluing maps that describe the boundary of $\overline{\Rcal}_{g;m}$ from Example \ref{ex: gluing and clutching maps}, we deduce the following corollary of Theorem \ref{thm: key basic result on R1m} and Proposition \ref{prop: pullback by chi admit tautological kunneth decomp}.

\begin{corollary}\label{cor: classes on bd R1 7 taut}
    \hspace{-50 pt}\begin{enumerate}[label=$\roman*)$]
        \item Every algebraic class on $\overline\Rcal_{1;7}\times \ol{\Rcal}_{1;7}$ of codimension $7$ supported on $\partial(\Rcal_{1;7}\times \Rcal_{1;7})$ has tautological K\"unneth decomposition.
        \item Every algebraic class on $\overline\Rcal_{1;7}\times \ol{\Rcal}_{1;7}$ of codimension less then $7$ has tautological K\"unneth decomposition.
    \end{enumerate}
\end{corollary}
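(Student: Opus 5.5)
We treat the two parts separately, in each case reducing to the individual factors $\overline\Rcal_{1;7}$ via the Künneth formula and then applying Theorem \ref{thm: key basic result on R1m} together with Proposition \ref{prop: pullback by chi admit tautological kunneth decomp}.

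For part $ii)$, write a class $\alpha$ of codimension $c<7$ (cohomological degree $2c<14$) on $\overline\Rcal_{1;7}\times\overline\Rcal_{1;7}$ via Künneth as $\sum_{a+b=2c}\alpha_a\otimes\alpha_b$, with $\alpha_a\in\H^a(\overline\Rcal_{1;7})$.

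\emph{Odd degrees.} The odd cohomology of $\overline\Rcal_{1;7}$ in degree $<14$ vanishes. To see this, use the excision sequence from the proof of Theorem \ref{thm: key basic result on R1m}. The boundary pieces are genus $0$ Prym spaces, which have no odd cohomology. On $\Rcal^\rt_{1;7}$, Proposition \ref{prop: key prop for even and odd coh of R1,m} $ii)$ gives $\H^k=\H^1(\Rcal_{1;1},\R^{k-1}\pi_*\QQ)$. This is built from $\H^1(\Rcal_{1;1},\VV_j)$ with $j\le k-1$, which by Eichler--Shimura consists of cusp forms of weight $j+2\le k+1$ together with Eisenstein classes.
\begin{itemize}
\item Cusp forms of weight $\le 8$ are absent by Lemma \ref{lemma: first nonzero cusp form for Gamma_1(2)}, except possibly in weight $8$, i.e.\ $j=6$, $k\ge 7$; that contribution lies in degree $7$ with Hodge type $(7,0)$, which is exactly the nonzero class of Theorem \ref{thm: key basic result on R1m} $ii)$.
\item Eisenstein classes contribute only mixed-weight pieces, which die in the pure cohomology of the compactification, as in Petersen's genus $1$ argument.
\end{itemize}
So odd cohomology appears only in degrees $7,\dots,13$, via $\H^{7,0}\oplus\H^{0,7}$ and its products with tautological classes; in particular $\H^{\le 6,\mathrm{odd}}=0$.

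\emph{Even degrees.} If $2c<14$, any Künneth term $\alpha_a\otimes\alpha_b$ with both $a,b$ odd would need $a,b\ge 7$, hence $a+b\ge14$, impossible. So only even–even terms occur, and these are tautological by Theorem \ref{thm: key basic result on R1m} $iii)$.

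For part $i)$, take a codimension-$7$ class supported on the boundary of the product. Such a class is pushed forward from a boundary gluing map. Without loss of generality the map is $\chi_\phi\times\mathrm{id}$, where $\chi_\phi:\overline\Rcal'_\phi\rarr\overline\Rcal_{1;7}$ is one of the maps in Example \ref{ex: gluing and clutching maps}, after passing to the covers $\overline\Rcal'$ and pushing forward. So the class is $(\chi_\phi\times\mathrm{id})_*\beta$ with $\beta$ of codimension $6$ on $\overline\Rcal'_\phi\times\overline\Rcal_{1;7}$.

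The factors of $\overline\Rcal'_\phi$ are spaces $\overline\Rcal'_{1;n}$ or $\overline\Mcal_{1,n}$ with $n\le 6$ (or genus $0$ spaces, or $\overline\Rcal'_{0,2r;n}$). For $n\le6$ these have tautological cohomology by Theorem \ref{thm: key basic result on R1m} $i)$ and the known results for $\overline\Mcal_{1,n}$ with $n\le 10$; genus $0$ spaces are tautological. Hence the cohomology of $\overline\Rcal'_\phi$ is entirely tautological, and $\beta$ decomposes as $\sum\gamma_i\otimes\delta_i$ with $\gamma_i$ tautological.

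It remains to control the odd part of the $\delta_i$. For $\delta_i\in\H^{b}(\overline\Rcal_{1;7})$ with $b$ odd we would need $b\ge 7$, and then $\gamma_i$ must also be odd, which cannot happen. The only other possibility is $b\ge14$ with $\gamma_i$ in degree $\le -2$, also impossible. So every $\delta_i$ is even and hence tautological by Theorem \ref{thm: key basic result on R1m} $iii)$.

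Finally, pushing forward under $\chi_\phi$ preserves tautological classes by the definition of the tautological system. Proposition \ref{prop: pullback by chi admit tautological kunneth decomp} enters where needed to express restrictions compatibly.

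The main obstacle is the odd-degree vanishing used in both parts: that the only odd cohomology of $\overline\Rcal_{1;7}$ comes from the weight $8$ cusp form in degree $7$, with Eisenstein contributions killed by purity. I would first check this by computing weights in the Leray and excision sequences, following Petersen's treatment of $\overline\Mcal_{1,n}$.
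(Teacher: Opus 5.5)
Your proposal is correct and is essentially the deduction the paper intends: a Graber--Pandharipande-style argument that combines the boundary gluing maps with Theorem \ref{thm: key basic result on R1m}, and you usefully make explicit the vanishing of $\H^{\mathrm{odd}}(\overline\Rcal_{1;7})$ in degrees below $7$ via the weight/Eichler--Shimura argument on $\Rcal^{\rt}_{1;7}$, which the paper leaves implicit. Two small remarks: by Poincaré duality that vanishing actually puts all odd cohomology in degree exactly $7$ (not $7,\dots,13$), and Proposition \ref{prop: pullback by chi admit tautological kunneth decomp} plays no real role in your argument.
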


The following Lemma, which was pointed to us by Samir Canning, is proven identically to \cite[Fact 2.1]{canningNonTautCycles}.
\begin{lemma}\label{lemma: non-vanishing hol forms on interior}
    Every non--zero element in $\H^{k, 0}(\rgmbar)$ has non--zero image under the restriction map $\H^k(\rgmbar) \rarr \H^k(\rgm)$.
\end{lemma}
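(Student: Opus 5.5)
The plan is to follow Canning's \cite[Fact 2.1]{canningNonTautCycles}. The input is a weight argument: a holomorphic form on a smooth compactification is detected on the open part because the kernel of restriction is spanned by Gysin images from the boundary, and those have the wrong Hodge type.

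Fix a nonzero $\omega \in \H^{k,0}(\rgmbar)$ and suppose its restriction to $\rgm$ vanishes. Set $D = \rgmbar \setminus \rgm$, the boundary divisor (with normal crossings in the orbifold sense). The Gysin long exact sequence gives
\[
\H^k_D(\rgmbar) \rarr \H^k(\rgmbar) \rarr \H^k(\rgm),
\]
and this is a sequence of mixed Hodge structures. So $\omega$ lies in the image of $\H^k_D(\rgmbar)$.

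To see this image, pass to the normalisation $\widetilde D$ of $D$, which is a disjoint union of images of the gluing and clutching maps $\chi_\phi$ of Example \ref{ex: gluing and clutching maps}. Each piece is a finite quotient of a product of smooth proper Deligne--Mumford stacks $\ol{\mc R}'_{\cdot}$ and $\ol{\mc M}_{\cdot}$. Because $\rgmbar$ is smooth and proper, $\H^k(\rgmbar)$ is pure of weight $k$. By Deligne's weight theory, the image of $\H^k_D(\rgmbar)$ in $\H^k(\rgmbar)$ equals the image of the Gysin pushforward
\[
\H^{k-2}(\widetilde D)(-1) \rarr \H^k(\rgmbar).
\]
This map is a morphism of pure Hodge structures of weight $k$. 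The Tate twist sends a class of type $(p,q)$ to type $(p+1,q+1)$. Hence its image lies in $\bigoplus_{p,q\ge 1} \H^{p,q}$, and in particular contains nothing of type $(k,0)$. Since $\omega$ is nonzero of type $(k,0)$, we get a contradiction.

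The main technical point is the Deligne--Mumford stack setting. First, the Cornalba compactification has quasi-stable curves and a non-normal-crossing-looking local structure: by the proof of Lemma \ref{lem: normal bundle}, $t_i=\tau_i^2$ on $\mc M$ but not on $\rgmbar$. So one must check that $\rgmbar$ is smooth, with the boundary $\sum_i\{\tau_i=0\}$ a normal crossings divisor in the local coordinates $\tau$. That is exactly what the $\tau$-coordinates in Lemma \ref{lem: normal bundle} show.

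Second, rational cohomology of DM stacks agrees with that of their coarse spaces. Alternatively, one passes to a finite cover by a smooth variety, such as a level structure cover, and takes invariants, which preserves both the Hodge decomposition and the exact sequences. With these two checks, the argument is identical to Canning's.
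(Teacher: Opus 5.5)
Your argument is correct and is essentially the paper's: the paper just defers to Canning's Fact 2.1, and the proof of that fact is this same weight argument, namely that the kernel of $\H^k(\rgmbar)\rarr\H^k(\rgm)$ is the image of a Gysin map from $\H^{k-2}(\widetilde D)(-1)$, which contains no class of type $(k,0)$. The normal-crossings check and the identification of $\widetilde D$ with images of gluing maps are harmless but not needed, since any resolution of the boundary gives the same conclusion.
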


\section{Proof of the Main Theorems}\label{section: proof of thm 11}

In this section we prove Theorem \ref{thm: classes RBbar are non-taut} and Theorem \ref{thm: main non--tautology theorem}.

\subsection{A key lemma}

We next focus on the glueing morphism $\chi : \overline\Rcal_{1;k}' \times \overline\Rcal_{1;k}' \rarr \overline\Rcal_{g;2m}'$. 


\vskip 0.5em
\begin{lemma}\label{lemma: pullback of biell cmp in Rgm is a multiple of the diagonal}
    Let $g \ge 2$, $g + 2m \ge 8$ and $k : = g - 1 + m$. The pullback under the glueing map $\overline\Rcal_{1;k}' \times \overline\Rcal_{1;k}' \xrightarrow[]{\chi} \overline\Rcal_{g; 2m}' \rarr \overline\Rcal_{g;2m}$ of the class of $\overline{\Rcal\Bcal}_{g, 0, 2m}^0 \subset \rgmbar$, restricted to the interior $\Rcal_{1;k}' \times \Rcal_{1;k}'$, is a non--zero multiple of the class of the diagonal $\Delta \subset \Rcal_{1;k}' \times \Rcal_{1;k}'$, i.e. $\chi_{|_{\Rcal_{1;k}' \times \Rcal_{1;k}'}}^*\left[\overline{\Rcal\Bcal}_{g, 0, 2m}^0\right] = \alpha \cdot [\Delta]$ for some $\alpha \in \QQ_{> 0}$.
\end{lemma}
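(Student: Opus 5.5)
The plan is to follow the strategy of Graber--Pandharipande and van Zelm for the bi--elliptic locus in $\Mcal_g$, adapted to Prym data. First I will make the glueing map explicit. A point of $\Rcal_{1;k}'\times\Rcal_{1;k}'$ is a pair $[\widetilde E_1/E_1; x_1,\dots,x_k]$, $[\widetilde E_2/E_2; x'_1,\dots,x'_k]$ with chosen fibre orderings. The map $\chi$ glues $x_i\sim x'_i$ for $1\le i\le g-1$ (and the corresponding $x_i^\pm\sim x_i'^\pm$), giving a curve of arithmetic genus $1+1+(g-1)-1=g$. The remaining $2m$ markings are $x_{g-1+j},x'_{g-1+j}$, labelled $y_{2j-1},y_{2j}$.

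Second, I will describe $\overline{\Rcal\Bcal}^0_{g,0,2m}$ as the image of the space $\overline\Adm(g,1)_{2m}$, enriched by the Prym datum of type $t=0$ (the composite $\widetilde C\to E$ factors through an elliptic cover of $E$), under the source map. I will then compute the fibre product of this image with $\chi$ via the combinatorics of Section \ref{section: preliminaries}, as in \cite[Appendix A]{pandhairpande2003nontaut}. Over the interior of $\Rcal_{1;k}'\times\Rcal_{1;k}'$, the source curve has two smooth elliptic components, so the bi--elliptic involution must either preserve each $E_i$ or swap them. It cannot preserve them: each $E_i$ would then map $2:1$ onto a component of the target, and the arithmetic genus of the target would be too small. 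Hence the involution swaps $E_1$ and $E_2$. It therefore fixes each of the $g-1$ nodes while exchanging their branches. In the admissible model these nodes are replaced by rational bridges carrying two branch points each, which accounts for the $2g-2$ branch points. The swap must also send $y_{2j-1}\mapsto y_{2j}$, so it induces an isomorphism $(E_1,x_\bullet)\cong(E_2,x'_\bullet)$ of $k$--pointed curves. I then check that the component $\Rcal\Bcal^0$, the one where $\widetilde C\to E$ factors through an elliptic cover, is exactly the one in which this isomorphism identifies $\widetilde E_1/E_1$ with $\widetilde E_2/E_2$ compatibly with the orderings of the fibres. Other choices of Prym data on the glued curve either land in $\Rcal\Bcal^t$ with $t>0$, or in $\Rcal\Bcal'$, or are incompatible. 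Consequently the interior part of the fibre product is set--theoretically the diagonal $\Delta$.

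Third comes the dimension count. We have $\dim\Rcal_{1;k}'=k$, so $\Delta$ has codimension $k$ in the product. On the other side, $\dim\Rcal\Bcal^0_{g,0,2m}=2g-2+m$, so its codimension in $\Rcal_{g;2m}$ is $3g-3+2m-(2g-2+m)=g-1+m=k$. The intersection is therefore dimensionally proper along $\Delta$. It follows that, restricted to the interior, $\chi^*[\overline{\Rcal\Bcal}^0_{g,0,2m}]$ is an intersection multiplicity times $[\Delta]$. All excess components of the fibre product, and any components on which Corollary \ref{cor: pullback boundary classes} produces $\psi$--class corrections, lie over $\partial(\Rcal_{1;k}'\times\Rcal_{1;k}')$ and disappear after restriction. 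Since the intersection is proper and the cycle is effective, the multiplicity $\alpha$ is a positive rational number, divided by automorphism orders of the stacks involved. To confirm $\alpha\neq0$ I would check generic reducedness by a local computation in the style of Lemma \ref{lem: normal bundle}: the deformation parameters of the $g-1$ smoothed nodes should map with the expected ramification index to those of the rational bridges.

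I expect the main obstacle to be the second step. One must rule out extra interior components of the preimage, for example the involution composed with a translation or with $-1$ on $E_i$, or Prym structures twisted by a $2$--torsion class. One must also check precisely that the $t=0$ condition forces $\widetilde E_1/E_1\cong\widetilde E_2/E_2$ compatibly with the fibre orderings. My first attempt would be to compute the Galois group $\ZZ/2\times\ZZ/2$ of the tower over the admissible cover, and to read off the genera $g(C_1)=1$ and $g(C_2)=g$ of the two intermediate subcovers directly from the glued curve.
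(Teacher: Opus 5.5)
Your route is the paper's. The paper forms the same fibre product $F$ of $\chi$ with the stabilisation map from $\overline{\Rcal\Adm}(g,1)^0_{2m}$, and lifts the diagonal into $F$ by the rational--bridge construction you describe. Following van Zelm, it then reduces the lemma to surjectivity of this lift onto $F$, after which your dimension count gives $\alpha>0$. Dimensional properness already forces positive multiplicity, so no reducedness check is needed.

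The step you leave open is where the paper actually does its work, and it is a short genus count rather than a Galois computation. Let $\sigma$ be the covering involution of $\widetilde E=\widetilde E_1\cup\widetilde E_2$ over $E$, and let $\iota_1\neq\sigma$ be the involution over $\st(T)$ with $g(\widetilde E/\iota_1)=1$. Its image in $\Gal(E/\st(T))$ is non--trivial, so it swaps $\widetilde E_1$ and $\widetilde E_2$. It must also fix every node: if $\iota_1(q)\neq q$ for a node $q$, then $\widetilde E/\iota_1$ acquires a node $[q\sim\iota_1(q)]$ whose partial normalisation is connected of genus $\ge 1$, forcing $g(\widetilde E/\iota_1)\ge 2$. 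Hence $\psi:=\iota_1|_{\widetilde E_1}\colon\widetilde E_1\to\widetilde E_2$ lifts $E_1\cong E_2$ and sends $x_i^\pm\mapsto x_i'^\pm$ for $i\le g-1$. By contrast, $\iota_1\sigma$ moves the nodes and has quotient of genus $g$; this is the splitting $g(C_1)=1$, $g(C_2)=g$ you wanted to read off. The same count is what justifies your ``therefore'' that the involution downstairs fixes each node.

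Two corrections. First, your reason for excluding an involution that preserves $E_1$ and $E_2$ is not right. The quotient would be two curves meeting in at least $\lceil (g-1)/2\rceil$ points, so for $g\ge 6$ its genus is too \emph{large}, not too small. For $3\le g\le 5$ it can be exactly $1$: for $g=3$, if $x_2-x_1$ and $x'_2-x'_1$ are $2$--torsion, the $-1$--involutions fixing the nodes give two $\PP^1$'s meeting twice. In the range of the lemma it is the markings that exclude this case. For $m\ge 1$ the involution must send $y_{2j-1}\in E_1$ to $y_{2j}\in E_2$, and $m=0$ forces $g\ge 8$.

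Second, $\overline\Rcal_{g;2m}$ does not remember the lifts of the $2m$ markings, so the argument determines $\psi$ on fibres only over $x_1,\dots,x_{g-1}$. For $m\ge 1$ the interior preimage is therefore the union of $\Delta$ with its translates by the automorphisms of the second factor that swap the two points over $x'_{g-1+j}$. So ``compatibly with the orderings of the fibres'' and ``set--theoretically the diagonal'' are too strong; the lemma as stated, and the paper's surjectivity claim, pass over the same point. This is harmless for the application, because holomorphic $k$--forms pulled back from $\overline\Rcal_{1;k}$ are invariant under these swaps.
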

\begin{proof}
Define the moduli stack $\overline{\Rcal\Adm}(g, 1)_{2m}^0$ by the following cartesian square:
\[\begin{tikzcd}
	{\overline{\Rcal\Adm}(g, 1)_{2m}^0} & {\overline{\Adm}(g, 1)_{2m}} \\
	{\overline{\Rcal\Bcal}_{g, 0, 2m}^0} & {\Bcal_{g, 0, 2m},}
	\arrow[from=1-1, to=1-2]
	\arrow[from=1-1, to=2-1]
	\arrow["\ulcorner"{anchor=center, pos=0.125}, draw=none, from=1-1, to=2-2]
	\arrow[from=1-2, to=2-2]
	\arrow[from=2-1, to=2-2]
\end{tikzcd}\]
and consider the following diagram:
\begin{align}\label{eq: key commutative diagram with admissible covers}
\begin{tikzcd}[ampersand replacement=\&]
	{\Rcal_{1;k}'} \&\& \\
	\& F \& {\overline{\Rcal\Adm}(g, 1)_{2m}^0} \\
	\& {\Rcal_{1;k}'\times \Rcal_{1;k}'} \& {\overline{\Rcal}_{g;2m}.}
	\arrow["\zeta"', dashed, from=1-1, to=2-2]
	\arrow["\theta", curve={height=-12pt}, from=1-1, to=2-3]
	\arrow["\delta"', curve={height=12pt}, from=1-1, to=3-2]
	\arrow[from=2-2, to=2-3]
	\arrow["{\widetilde \phi}"', from=2-2, to=3-2]
	\arrow["\ulcorner"{anchor=center, pos=0.125}, draw=none, from=2-2, to=3-3]
	\arrow["\phi", from=2-3, to=3-3]
	\arrow["\chi", from=3-2, to=3-3]
\end{tikzcd}
\end{align}
The map $\phi$ is given by stabilisation. The map $\theta$ joins two copies of $E$ by rational components at the first $g - 1$ points $x_i$ and two copies of $\widetilde E$ by rational components at the points $x_i^\pm$ (a total of $2g - 2$ rational components upstairs). See Figure \ref{fig: def theta in a picture}.

\begin{figure}[!ht]
\centering
\resizebox{0.6\textwidth}{!}{%
\begin{circuitikz}
\tikzstyle{every node}=[font=\fontsize{10.8pt}{14.1pt}\selectfont]

\draw [short] (6.125,5.125) .. controls (11,6.5) and (11,6.5) .. (15.5,5.125)node[pos=1, fill={rgb,255:red,255; green,255; blue,255}, fill opacity=1, text opacity=1, inner xsep=0.080cm, inner ysep=0.085cm, rounded corners=0.020cm]{$E$};
\draw [short] (6,8.25) .. controls (10.875,9.625) and (10.875,9.625) .. (15.375,8.25)node[pos=1, fill={rgb,255:red,255; green,255; blue,255}, fill opacity=1, text opacity=1, inner xsep=0.080cm, inner ysep=0.085cm, rounded corners=0.020cm]{$E$};
\draw [short] (6.125,7.125) .. controls (11,8.5) and (11,8.5) .. (15.5,7.125)node[pos=1, fill={rgb,255:red,255; green,255; blue,255}, fill opacity=1, text opacity=1, inner xsep=0.080cm, inner ysep=0.085cm, rounded corners=0.020cm]{$E$};
\draw [short] (6.5,5.875) -- (6.5,4)node[pos=0, fill={rgb,255:red,255; green,255; blue,255}, fill opacity=1, text opacity=1, inner xsep=0.080cm, inner ysep=0.085cm, rounded corners=0.020cm]{$\PP^1$};
\node at (6.5,4.375) [circ, color={rgb,255:red,0; green,0; blue,0}] {};
\node at (6.5,4.75) [circ, color={rgb,255:red,0; green,0; blue,0}] {};
\draw [short] (8,6.25) -- (8,4.375)node[pos=0, fill={rgb,255:red,255; green,255; blue,255}, fill opacity=1, text opacity=1, inner xsep=0.080cm, inner ysep=0.085cm, rounded corners=0.020cm]{$\PP^1$};
\node at (8,4.75) [circ, color={rgb,255:red,0; green,0; blue,0}] {};
\node at (8,5.125) [circ, color={rgb,255:red,0; green,0; blue,0}] {};
\node at (14.45,5.45) [circ, color={rgb,255:red,0; green,0; blue,0}] {};
\node at (14.45,7.45) [circ, color={rgb,255:red,0; green,0; blue,0}] {};
\node at (14.45,8.525) [circ, color={rgb,255:red,0; green,0; blue,0}] {};
\node at (13.6,5.7) [circ, color={rgb,255:red,0; green,0; blue,0}] {};
\node at (13.6,8.775) [circ, color={rgb,255:red,0; green,0; blue,0}] {};
\node at (13.6,7.7) [circ, color={rgb,255:red,0; green,0; blue,0}] {};
\node at (12.75,7.915) [circ, color={rgb,255:red,0; green,0; blue,0}] {};
\node at (12.75,9) [circ, color={rgb,255:red,0; green,0; blue,0}] {};
\node at (12.75,5.91) [circ, color={rgb,255:red,0; green,0; blue,0}] {};
\draw [short] (6.625,9) -- (6.625,6.875)node[pos=0, fill={rgb,255:red,255; green,255; blue,255}, fill opacity=1, text opacity=1, inner xsep=0.080cm, inner ysep=0.085cm, rounded corners=0.020cm]{$\PP^1$};
\node at (6.625,7.625) [circ, color={rgb,255:red,0; green,0; blue,0}] {};
\node at (6.625,8) [circ, color={rgb,255:red,0; green,0; blue,0}] {};
\draw [short] (8.125,9.5) -- (8.125,7.375)node[pos=0, fill={rgb,255:red,255; green,255; blue,255}, fill opacity=1, text opacity=1, inner xsep=0.080cm, inner ysep=0.085cm, rounded corners=0.020cm]{$\PP^1$};
\node at (8.125,8.04) [circ, color={rgb,255:red,0; green,0; blue,0}] {};
\node at (8.125,8.4) [circ, color={rgb,255:red,0; green,0; blue,0}] {};

\draw [short] (5.75,11.25) .. controls (10.625,12.625) and (10.625,12.625) .. (15.125,11.25)node[pos=1, fill={rgb,255:red,255; green,255; blue,255}, fill opacity=1, text opacity=1, inner xsep=0.080cm, inner ysep=0.085cm, rounded corners=0.020cm]{$\widetilde E$};
\draw [short] (5.875,10.125) .. controls (10.75,11.5) and (10.75,11.5) .. (15.25,10.125)node[pos=1, fill={rgb,255:red,255; green,255; blue,255}, fill opacity=1, text opacity=1, inner xsep=0.080cm, inner ysep=0.085cm, rounded corners=0.020cm]{$\widetilde E$};
\node at (14.6,10.325) [circ, color={rgb,255:red,0; green,0; blue,0}] {};
\node at (14.3,11.5) [circ, color={rgb,255:red,0; green,0; blue,0}] {};
\node at (14.6,11.4) [circ, color={rgb,255:red,0; green,0; blue,0}] {};
\node at (14.3,10.4) [circ, color={rgb,255:red,0; green,0; blue,0}] {};
\node at (13.75,11.665) [circ, color={rgb,255:red,0; green,0; blue,0}] {};
\node at (13.45,11.75) [circ, color={rgb,255:red,0; green,0; blue,0}] {};
\node at (13.75,10.575) [circ, color={rgb,255:red,0; green,0; blue,0}] {};
\node at (13.45,10.675) [circ, color={rgb,255:red,0; green,0; blue,0}] {};
\node at (12.9,11.9) [circ, color={rgb,255:red,0; green,0; blue,0}] {};
\node at (12.6,11.975) [circ, color={rgb,255:red,0; green,0; blue,0}] {};
\node at (12.6,10.9) [circ, color={rgb,255:red,0; green,0; blue,0}] {};
\node at (12.9,10.81) [circ, color={rgb,255:red,0; green,0; blue,0}] {};
\draw [short] (6.375,12) -- (6.375,9.875)node[pos=0, fill={rgb,255:red,255; green,255; blue,255}, fill opacity=1, text opacity=1, inner xsep=0.080cm, inner ysep=0.085cm, rounded corners=0.020cm]{$\PP^1$};
\node at (6.375,10.63) [circ, color={rgb,255:red,0; green,0; blue,0}] {};
\node at (6.375,11) [circ, color={rgb,255:red,0; green,0; blue,0}] {};
\draw [short] (8,12.5) -- (8,10.375)node[pos=0, fill={rgb,255:red,255; green,255; blue,255}, fill opacity=1, text opacity=1, inner xsep=0.080cm, inner ysep=0.085cm, rounded corners=0.020cm]{$\PP^1$};
\node at (8,11.125) [circ, color={rgb,255:red,0; green,0; blue,0}] {};
\node at (8,11.5) [circ, color={rgb,255:red,0; green,0; blue,0}] {};
\node [font=\fontsize{10.8pt}{14.1pt}\selectfont, fill={rgb,255:red,255; green,255; blue,255}, fill opacity=1, text opacity=1, inner xsep=0.080cm, inner ysep=0.085cm, rounded corners=0.020cm] at (9.5,5) {branch points};
\node [font=\fontsize{10.8pt}{14.1pt}\selectfont, fill={rgb,255:red,255; green,255; blue,255}, fill opacity=1, text opacity=1, inner xsep=0.080cm, inner ysep=0.085cm, rounded corners=0.020cm] at (13.5,5) {markings};
\draw [-{Stealth[scale=1.5]}, ] (10.5,10.75) -- (10.5,9.75);
\draw [-{Stealth[scale=1.5]}, ] (10.5,7.625) -- (10.5,6.625);
\draw [short] (8.5,12.625) -- (8.5,10.5)node[pos=0, fill={rgb,255:red,255; green,255; blue,255}, fill opacity=1, text opacity=1, inner xsep=0.080cm, inner ysep=0.085cm, rounded corners=0.020cm]{$\PP^1$};
\node at (8.5,11.25) [circ, color={rgb,255:red,0; green,0; blue,0}] {};
\node at (8.5,11.625) [circ, color={rgb,255:red,0; green,0; blue,0}] {};
\draw [short] (6.875,12.125) -- (6.875,10)node[pos=0, fill={rgb,255:red,255; green,255; blue,255}, fill opacity=1, text opacity=1, inner xsep=0.080cm, inner ysep=0.085cm, rounded corners=0.020cm]{$\PP^1$};
\node at (6.875,10.750) [circ, color={rgb,255:red,0; green,0; blue,0}] {};
\node at (6.875,11.125) [circ, color={rgb,255:red,0; green,0; blue,0}] {};
\end{circuitikz}
}%
\caption{The image of $[\widetilde E, E]$ under $\theta$}
\label{fig: def theta in a picture}
\end{figure}

\vskip 0.5em
We point out that $\theta$ is well defined. Indeed, we see in Figure \ref{fig: def theta in a picture} that, if we forget the marked points on the rational components and stabilise, we obtain curves $[\widetilde E \cup_{\{x_i^\pm\}} \widetilde E \rarr E \cup_{\{x_i\}} E \rarr E]$. Notice also that $\widetilde E$ possesses the involution $\iota_1$ given by swapping the two copies of $\widetilde E$. The quotient $(\widetilde E \cup_{\{x_i^\pm\}}\widetilde E) / \iota_1= \widetilde E$ has genus $1$. It follows that $\theta$ is well--defined and $\phi\circ \theta$ lands in $\overline{\Rcal\Bcal}_{g,0, 2m}^0$. These remarks also make it clear that diagram (\ref{eq: key commutative diagram with admissible covers}) is commutative.

\vskip 0.5em
As in \cite[Proof of Proposition 5]{zelm2018nontautBiell}, we are left to show that $\zeta : \Rcal_{1;k}' \rarr F$ is surjective on geometric points. 

\vskip 0.5 em 
By definition, a point in $F(\CC)$ is given by a triple $\bigl((\widetilde E_1/E_1, \widetilde E_2/ E_2), \widetilde S \rarr S \rarr T, \gamma\bigr)$ where $(\widetilde E_1/E_1, \widetilde E_2/E_2) \in (\Rcal_{1;k}'\times \Rcal_{1;k}')(\CC)$, $(\widetilde S \rarr S \rarr T) \in \overline{\Rcal\Adm}(g, 1)_{2m}^0$, and $\gamma$ is an isomorphism between $\chi(\widetilde E_1/E_1, \widetilde E_2/E_2)$ and $\phi(\widetilde S \rarr S \rarr T)$. Let $(\widetilde E, E) := \chi(\widetilde E_1/E_1, \widetilde E_2/E_2)$. Exactly as in \cite[Proof of Lemma 6]{zelm2018nontautBiell}, we see that the involution of $E \rarr \st(T)$ exchanges the two components $E_1$ and $E_2$ of $E$, and fixes all the nodes. Now, since $\Im(\phi) \subset\overline{\Rcal\Bcal}_{g, 0, 2m}^0$, there exists $\iota_1 : \widetilde E \rarr \widetilde E$ an involution of $\widetilde E / \st(T)$, different from the one over $E$, such that $g(\widetilde E/ \iota_1) = 1$. There are two possibilities for $\iota_1$. Assume by contradiction that $\iota_1$ fixes $\widetilde E_1$ and $\widetilde E_2$. In this case, since $g(\widetilde E_i/ \iota_1) \ge 1$ for $i = 1, 2$, it follows that $g(\widetilde E / \iota_1) \ge 2$, a contradiction.

\vskip 0.5em
Therefore, $\iota_1$ swaps $\widetilde E_1$ and $\widetilde E_2$. We claim that $\iota_1$ fixes all the nodes of $\widetilde E$. If not, there exists $q \in \widetilde E_\sing$ such that $\iota_1(q) \ne q$. This shows that $\widetilde E/\iota_1$ has a node at $[q \sim \iota_1(q)]$ and, since the partial normalisation of $\widetilde E/\iota_1$ at $[q \sim \iota_1(q)]$ is connected of genus $\ge 1$, we see that $\widetilde E /\iota_1$ has genus at least 2, again a contradiction. Thus, the involution $\iota_1$ has the desired properties. Surjectivity of $\zeta$ now follows as $[\widetilde E_1/E_1, x_1, \dots, x_k]$ maps to $\bigl((\widetilde E_1/E_1, \widetilde E_2/ E_2), \widetilde S \rarr S \rarr T, \gamma\bigr)$, and this concludes the proof.
\end{proof}

\subsection{Some lifting lemmas}

The following three lemmas will be the building blocks on which we will base our inductive procedure for the proof of Theorem \ref{thm: classes RBbar are non-taut}. In particular, they allow us to bypass the conditions $g + m$ is even required in the proof of Theorem \ref{thm: main non--tautology theorem}.
\begin{lemma}\label{lem: non tautological if add ramification point}
    Assume that $\left[\overline{\Rcal\Bcal}_{g, n, 2m}^0\right] \notin\R^\bullet(\overline\Rcal_{g, n + 2m})$. Then $\overline{\Rcal\Bcal}_{g, n+1, 2m}^0$ has non--tautological fundamental class. 
\end{lemma}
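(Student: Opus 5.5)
We argue by contraposition, following the strategy of \cite{canningNonTautCycles} and \cite{zelm2018nontautBiell} for adding a fixed point to bi--elliptic loci. Suppose $\bigl[\overline{\Rcal\Bcal}_{g, n+1, 2m}^0\bigr]$ were tautological in $\overline\Rcal_{g;n+1+2m}$. The aim is to produce from it a tautological class on $\overline\Rcal_{g;n+2m}$ that is a non--zero multiple of $\bigl[\overline{\Rcal\Bcal}_{g, n, 2m}^0\bigr]$. This contradicts the hypothesis.

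The natural operation is pushforward along the forgetful map $\pi : \overline\Rcal_{g;n+1+2m} \rarr \overline\Rcal_{g;n+2m}$ that forgets the last fixed marking $x_{n+1}$. The tautological ring is stable under forgetful pushforwards by definition. The difficulty is that $\overline{\Rcal\Bcal}_{g, n+1, 2m}^0$ has the same dimension as $\overline{\Rcal\Bcal}_{g,n,2m}^0$, since $x_{n+1}$ is constrained to the finitely many fixed points of the bi--elliptic involution. Hence $\pi_*$ of its class vanishes for dimension reasons.

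The fix is to first multiply by a tautological class of degree one, such as $\psi_{n+1}$ (pulled back from $\overline\Mcal_{g,n+1+2m}$ via $u$), and then compute
\[
\pi_*\bigl(\psi_{n+1}\cdot[\overline{\Rcal\Bcal}_{g,n+1,2m}^0]\bigr).
\]
Alternatively, one can use the class of the divisor of $x_{n+1}$ colliding with another marking, as in the genus--one arguments. The restriction $\pi|$ is generically finite of degree $2g-2$ onto $\overline{\Rcal\Bcal}_{g,n,2m}^0$. This degree is the number of ramification points of $C\rarr E$ not already occupied by $x_1,\dots,x_n$, up to symmetries. So the result is $c\cdot[\overline{\Rcal\Bcal}_{g,n,2m}^0]$ plus possible corrections.

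A cleaner version avoids $\psi$ altogether and instead uses
\[
\pi^*\pi_*,
\]
or equivalently works on the interior. One first shows
\[
\pi_*\bigl(\psi_{n+1}|_{\mathrm{locus}}\bigr) = \deg\bigl(\psi_{n+1}|_{\text{fibre}}\bigr)\cdot[\overline{\Rcal\Bcal}^0_{g,n,2m}],
\]
and then checks that the degree is non--zero. On a fibre, $\psi_{n+1}$ restricted to a finite set of points carries no degree. So the correct move is to pass through $\kappa$/$\psi$ classes on the base: one uses $\pi_*(\psi_{n+1}^{2}\cdot\pi^*(\cdot))$-type identities with $\kappa_1$, or uses the boundary divisor $D_{i,n+1}$ where $x_{n+1}$ collides with a point $y_{2j-1}$. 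The latter is tautological, being the pushforward of a gluing map from Example \ref{ex: gluing and clutching maps}.

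The main obstacle is precisely this bookkeeping: exhibiting a tautological operation, namely pushforward combined with intersection with tautological or boundary classes, that sends $[\overline{\Rcal\Bcal}^0_{g,n+1,2m}]$ to a non--zero multiple of $[\overline{\Rcal\Bcal}^0_{g,n,2m}]$, modulo classes supported on the boundary that are themselves tautological. The first thing I would try is the following. Since $x_{n+1}$ ranges over fixed points of the involution, the locus $\overline{\Rcal\Bcal}^0_{g,n+1,2m}$ is finite and flat of degree $2g-2-n$ over $\overline{\Rcal\Bcal}^0_{g,n,2m}$ after restricting to the interior. I would then work in the quotient $\H^\bullet(\rgm)/\R^\bullet$ using the localisation sequence, where boundary corrections disappear. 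Next I would define the tautological correspondence
\[
\pi_*\bigl(\kappa\text{-class}\cdot -\bigr)
\]
adapted so that the fibre degree is $2g-2-n\neq 0$ times an explicit non--zero number. Failing that, I would fall back on comparing Künneth components under a gluing map as in Lemma \ref{lemma: pullback of biell cmp in Rgm is a multiple of the diagonal}: the extra fixed point lands on a rational tail, giving the diagonal multiple directly.
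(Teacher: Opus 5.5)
There is a genuine gap, and it sits exactly where you abandon the direct argument. You assert that $\pi_*\bigl[\overline{\Rcal\Bcal}^0_{g,n+1,2m}\bigr]$ vanishes for dimension reasons, because $\overline{\Rcal\Bcal}^0_{g,n+1,2m}$ has the same dimension as $\overline{\Rcal\Bcal}^0_{g,n,2m}$. This is backwards. A proper pushforward $\pi_*[V]$ vanishes when $\dim \pi(V) < \dim V$, for instance when $V=\pi^{-1}(W)$ is a full preimage. Here, instead, $\pi$ restricted to $V=\overline{\Rcal\Bcal}^0_{g,n+1,2m}$ is generically finite onto $\pi(V)=\overline{\Rcal\Bcal}^0_{g,n,2m}$. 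Hence $\pi_*[V]=d\cdot\bigl[\overline{\Rcal\Bcal}^0_{g,n,2m}\bigr]$, where $d>0$ is essentially the number $2g-2-n$ of fixed points of the bi--elliptic involution not already marked. The codimension drops by exactly one, as it must for a map with one--dimensional fibres. Since $\R^\bullet$ is stable under forgetful pushforwards, tautologicality of $\bigl[\overline{\Rcal\Bcal}^0_{g,n+1,2m}\bigr]$ would force $\bigl[\overline{\Rcal\Bcal}^0_{g,n,2m}\bigr]$ to be tautological. This one--line argument is the paper's entire proof.

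The workarounds you propose instead do not work. Capping with $\psi_{n+1}$, a $\kappa$--class, or a boundary divisor $D_{i,n+1}$ before pushing forward yields a class of dimension $\dim\overline{\Rcal\Bcal}^0_{g,n,2m}-1$. Such a class can never be a non--zero multiple of $\bigl[\overline{\Rcal\Bcal}^0_{g,n,2m}\bigr]$; as you yourself note, $\psi_{n+1}$ carries no degree on a finite fibre. The fallback routes (working modulo the boundary via localisation, or comparing Künneth components under a gluing map) are unnecessary here and are not carried out. So the proposal never reaches a valid argument, even though it contains the key geometric input: the forgetful map is finite of positive degree on the locus. The fix is simply to push forward the fundamental class itself.
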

\begin{proof}
    Let $\pi: \ol{ \mc R}_{g;1+n+2m}\ra \ol{\mc R}_{g;n+2m}$ be the morphism that forgets the first marked point and then stabilises. Notice now that, since $2:1$ covers of an elliptic curve from a fixed genus $g$ curve correspond to bi--elliptic involutions $\iota\in \Aut(C)$ and those have, with multiplicity, $2g-2$ fixed points, when restricted to $\overline{\Rcal\Bcal}_{g, n+1, 2m}^0$, $\pi$ is finite. In particular, since by definition ${\pi}_{*}$ sends tautological classes to tautological classes and $\pi(\overline{\Rcal\Bcal}_{g, n+1, 2m}^0)= \overline{\Rcal\Bcal}_{g, n, 2m}^0$, the result follows.
\end{proof}
\begin{lemma}\label{lem: non tautological if add 2 conjugate points}
    Assume that $\left[\overline{\Rcal\Bcal}_{g, n, 2m}^0\right]\notin \R^\bullet\left(\ol{\mc R}_{g;n + 2m}\right)$. Then $\overline{\Rcal\Bcal}_{g, n, 2m+2}^0$ has non-tautological fundamental class. 
\end{lemma}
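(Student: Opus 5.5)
We argue by contraposition, following the proof of Lemma \ref{lem: non tautological if add ramification point}. Let
\[
\pi : \overline\Rcal_{g;n+2m+2} \rarr \overline\Rcal_{g;n+2m}
\]
be the morphism forgetting the last two marked points $y_{2m+1}, y_{2m+2}$ and stabilising. Pushforward along forgetful maps preserves tautological classes, and $\pi$ is a composition of two such maps. So it suffices to show that
\[
\pi_*\left[\overline{\Rcal\Bcal}_{g, n, 2m+2}^0\right] = c\cdot \left[\overline{\Rcal\Bcal}_{g, n, 2m}^0\right] \quad\text{with } c \neq 0.
\]
If the source class were tautological, the target class would then be tautological as well, contradicting the hypothesis.

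The difference from Lemma \ref{lem: non tautological if add ramification point} is that $\pi$ restricted to $\overline{\Rcal\Bcal}_{g, n, 2m+2}^0$ is no longer finite. Over a general point of $\overline{\Rcal\Bcal}_{g, n, 2m}^0$, the fibre consists of pairs $(y, \iota(y))$ with $y$ moving on $C$, where $\iota$ is the bi--elliptic involution. So the fibre is one--dimensional. Since $\overline{\Rcal\Bcal}_{g, n, 2m+2}^0$ has dimension one more than $\overline{\Rcal\Bcal}_{g, n, 2m}^0$, the naive pushforward vanishes for dimension reasons.

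To fix this, I would cap with a tautological class before pushing forward. The first choice to try is $\psi_{2m+1}$, the $\psi$--class of the point $y_{2m+1}$, which is tautological: it is a boundary correction of the pullback of $\psi$ from the space with one fewer point, or, in the paper's framework, it is expressible through the gluing and forgetful pushforwards. Then compute
\[
\pi_*\left(\psi_{2m+1}\cdot \left[\overline{\Rcal\Bcal}_{g, n, 2m+2}^0\right]\right).
\]
On a general fibre, which is a copy of $C$ (or a finite cover of it, depending on whether $\iota$ is unique for general $C$), $\psi_{2m+1}$ restricts to $\omega_C$ twisted by the other marked points. Its degree is therefore $2g-2+n+2m+1 > 0$, up to a positive multiplicity coming from the automorphism/ordering count.

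The main obstacle is justifying this degree computation scheme--theoretically. Two points need care:
\begin{enumerate}
\item The fibre of $\pi$ over a general point of $\overline{\Rcal\Bcal}_{g, n, 2m}^0$ must be identified with $C$. This needs uniqueness of the bi--elliptic involution for general $[C]$ with $g\ge 2$ in this component. It should hold generically for $g \ge 6$, but small genus needs extra care; there one can instead work with the full preimage and only track positivity.
\item The multiplicity $c$ must be shown positive. This reduces to a fibrewise degree of $\psi$ on a smooth fibre, computed by the projection formula.
\end{enumerate}
As a fallback, if the $\psi$--class is awkward, use the boundary divisor where $y_{2m+1}$ collides with a fixed marked point $x_1$ (when $n \ge 1$), or where $y_{2m+1}$ collides with $y_{2m+2}$. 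The latter meets the fibre in the $2g-2$ fixed points of $\iota$. This divisor is tautological, being a gluing pushforward from Example \ref{ex: gluing and clutching maps}, and it gives a finite positive fibre degree in the same way.
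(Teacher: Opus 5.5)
Your argument is correct, but it takes a genuinely different route from the paper. The paper pulls back; you push forward.

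\textbf{The paper's route.} It first applies Lemma \ref{lem: non tautological if add ramification point} to get $[\overline{\Rcal\Bcal}^0_{g,n+1,2m}]$ non--tautological. It then uses the gluing map $\sigma:\overline\Rcal_{g;n+2m+1}\times\overline\Mcal_{0,3}\to\overline\Rcal_{g;n+2m+2}$, which bubbles the two new conjugate points off the fixed point $x_1$. It asserts $[\overline{\Rcal\Bcal}^0_{g,n+1,2m}]=\alpha\,\sigma^*[\overline{\Rcal\Bcal}^0_{g,n,2m+2}]$ and concludes with Proposition \ref{prop: pullback by chi admit tautological kunneth decomp}. Because it needs an unmarked fixed point, the case $n=2g-2$ requires a separate detour.

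\textbf{Your route.} You only use three facts. First, $\R^\bullet$ is a ring closed under forgetful pushforward. Second, $\psi$ is tautological; you should justify this, e.g.\ via $\psi_i=-\pi_*(D_{i,N+1}^2)$, where $D_{i,N+1}$ is the image of the gluing map in item (3) of Example \ref{ex: gluing and clutching maps} with an $\overline\Mcal_{0,3}$ tail. Third, $\pi_*(\psi\cap[\overline{\Rcal\Bcal}^0_{g,n,2m+2}])$ lies in $\CH_d$ of the irreducible $d$--dimensional $\overline{\Rcal\Bcal}^0_{g,n,2m}$, so it is exactly $c$ times its class, with $c$ the degree on a general fibre. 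This gives a uniform argument with no case distinction in $n$. It also avoids having to identify $\sigma^*$ of the class with a multiple of a single component. You are also right that uniqueness of the bi--elliptic involution is irrelevant: each component of the fibre contributes positively.

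\textbf{Two corrections to the details.} Neither affects the conclusion.
\begin{enumerate}
\item The fibre degree is not $2g-2+n+2m+1$. Write $\psi_{y_{2m+1}}=\pi^*\psi_{y_{2m+1}}+D_{\{y_{2m+1},y_{2m+2}\}}$, with $\pi$ forgetting $y_{2m+2}$. On the curve $\{(y,\iota y)\}\cong C$ in the fibre, this has degree $(2g-2+n+2m)+(2g-2-n)=4g-4+2m>0$. The second term counts the points where $y_{2m+2}=\iota(y_{2m+1})$ collides with $y_{2m+1}$, namely the unmarked ramification points of $C\to E$. The marked ones $x_i$ are blown up in the fibre, and the graph of $\iota$ is separated from the diagonal there.
\item Your fallbacks would fail. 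The divisor where $y_{2m+1}$ meets $x_1$ is disjoint from this curve: as $y_{2m+1}\to x_1$, also $y_{2m+2}\to x_1$, so the limit lies on the triple--collision divisor. The divisor $D_{\{y_{2m+1},y_{2m+2}\}}$ meets the curve in only $2g-2-n$ points, which is zero when $n=2g-2$. Stay with $\psi$, or use $D_{\{y_1,y_{2m+1}\}}$ when $m\ge 1$.
\end{enumerate}
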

\begin{proof}
    Assume first that $n\leq 2g-3$ so that, by Lemma \ref{lem: non tautological if add ramification point}, under our assumptions $\left[\overline{\mc R\Bcal}_{g, n + 1, 2m}^0\right]$ is non-tautological. As in \cite{zelm2018nontautBiell}, consider the gluing morphism:
    \[
    \sigma:\ol{\mc R}_{g;n+2m+1}\times \overline{\Mcal}_{0,3}\ra \ol{\mc R}_{g,n+ 2m+2}
    \]
    Notice now that $\sigma$ restricted to $\overline{\Rcal\Bcal}_{g, n+1, 2m}^0$ factors through $\overline{\Rcal\Bcal}_{g, n, 2m+2}^0$. Indeed, keeping the same notation as above, consider the following commutative diagram:
    \[
    \begin{tikzcd}
        & \widetilde C \arrow[d,"\pi_\iota"] \arrow[dr, "\pi_{1}"] \arrow[dd, bend right=60,"\pi"']& \\
        & C=\widetilde C/\iota \arrow[d,"p"] & \widetilde C/\iota_1 \arrow[dl, "p_1"]\\
        & E
    \end{tikzcd}
    \]
    where $\iota_1$ is the involution of $\widetilde C$ such that $g(\widetilde C/\iota_1)=1$. Since $p_1$ is a non--constant morphism between curves of genus $1$, it is \'etale. This together with the fact that, by construction, $x_1$ is a ramification point of $p$, shows that $x_1^+$ and $x_1^-$ must be ramification points of $\pi_{1}$. In particular, $\iota_1$ fixes $x_1^+$ and $x_1^-$ and thus, calling $\iota_0$ the involution of $\mathbb P^1$ fixing $0$ and $\infty$, we get a well defined involution $\iota'_1$ on $\widetilde X$ by having it act as $\iota_1$ on $\widetilde C$ and as $\iota_0$ on its rational components. Quotienting by $\iota'_1$ then gives a Cornalba--admissible $2:1$ cover of a stable curve of genus $1$.
    
    \vskip 0.5 em
    Now, since for each $g,n,m,t$,
    \[
    \mathrm{codim}\left(\overline{\Rcal\Bcal}_{g, n+1, 2m}^t,\ol{\mc R}_{g;n+2m+1}\right)=\mathrm{codim}\left(\overline{\Rcal\Bcal}_{g, n, 2m+2}^t,\ol{\mc R}_{g;n+2m+2}\right),
    \]
    there is some positive scalar $\alpha\in \mathbb Q$ such that
    \[
    \left[\overline{\Rcal\Bcal}_{g, n+1, 2m}^0\right]=\alpha\sigma^*\left[\overline{\Rcal\Bcal}_{g, n, 2m+2}^0\right].
    \]
    Since $\sigma$ is a gluing morphism, by Proposition \ref{prop: pullback by chi admit tautological kunneth decomp}, if $\left[\overline{\Rcal\Bcal}_{g, n, 2m+2}^0\right]$ were to be tautological, the same would be true for $\left[\overline{\Rcal\Bcal}_{g, n+1, 2m}^0\right]$.
    \vskip 0.5 em
    Finally, as in \cite{zelm2018nontautBiell}, if $n=2g-2$, we may just repeat the previous reasoning to show that if $\left[\overline{\Rcal\Bcal}_{g, 2g-2, 2m}^0\right]$ is non-tautological so is $\left[\overline{\Rcal\Bcal}_{g, 2g-3, 2m+2}^0\right]$. At this point we may conclude by Lemma \ref{lem: non tautological if add ramification point}.
\end{proof}
\begin{remark}
    Notice that, also when $t\neq 0$, when we restrict the gluing morphism $\sigma$ to $\overline{\Rcal\Bcal}_{g, n+1, 2m}^t$ we again land in $\overline{\Rcal\Bcal}_{g, n, 2m+2}^t$. The only difference being the involution we have to take on the two copies of $\mathbb P^1$. If the two preimages of $x_1$ are not ramification points of $\pi_1$, swap the two components, otherwise proceed as in the proof of Lemma \ref{lem: non tautological if add ramification point}.
\end{remark}
\begin{lemma}\label{lem: non tautological if add node}
    Assume that $\left[\overline{\Rcal\Bcal}_{g, 1, 0}^0\right]\notin \R^\bullet\left(\ol{\mc R}_{g;1}\right)$. Then $\overline{\Rcal\Bcal}_{g + 1}^0$ has non-tautological fundamental class. 
\end{lemma}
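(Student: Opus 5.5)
The plan is to follow van Zelm's argument for passing from $\Bcal_{g,1}$ to $\Bcal_{g+1}$, adapted to the Prym setting. Consider the clutching morphism of type (4) in Example \ref{ex: gluing and clutching maps}, composed with the gluing to a fixed pointed elliptic Prym curve. Concretely, take the gluing map
\[
\xi : \overline\Rcal'_{g;1}\times \overline\Rcal'_{1;1}\rarr \overline\Rcal'_{g+1},
\]
which attaches a genus one Prym curve $[\widetilde E/E, y]$ at the marked point $x_1$ and at its preimages $x_1^\pm$. This is a gluing map of type (1), with $i=1$ and $S_\pbf=\varnothing$. By Proposition \ref{prop: pullback by chi admit tautological kunneth decomp}, $\xi^*$ sends tautological classes to classes with tautological Künneth decomposition. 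So if $[\overline{\Rcal\Bcal}^0_{g+1}]$ were tautological, then $\xi^*[\overline{\Rcal\Bcal}^0_{g+1}]$ would be as well. Slicing with a point class of $\overline\Rcal'_{1;1}$, which is tautological because $\overline\Rcal_{1;1}$ is a rational curve (Theorem \ref{thm: key basic result on R1m}), and pushing forward to $\overline\Rcal_{g;1}$, we would get that a certain class on $\overline\Rcal_{g;1}$ is tautological.

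The geometric step is to identify $\xi^{-1}(\overline{\Rcal\Bcal}^0_{g+1})$, or at least its components of the expected codimension that dominate a generic fibre over $\overline\Rcal'_{1;1}$. As in the proof of Lemma \ref{lemma: pullback of biell cmp in Rgm is a multiple of the diagonal}, I would study the admissible-cover limits. Let $[\widetilde C\cup\widetilde E^{\pm}\rarr C\cup_{x_1}E]$ be a point whose stabilisation is bi--elliptic of type $0$. The bi--elliptic involution cannot swap $C$ with $E$, since $g\ge 2$. So it fixes the node $x_1$ and restricts to $E$, making $E\rarr E/\iota$ a genus $0$ quotient. It also restricts to $C$ with quotient of genus $1$, so $[C,x_1]\in\Bcal_{g,1}$ with $x_1$ fixed. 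Upstairs, the second involution $\iota_1$ must act on $\widetilde C$ with $\widetilde C/\iota_1$ of genus $1$, hence land in the type $0$ component. An argument identical to the node-fixing one in Lemma \ref{lemma: pullback of biell cmp in Rgm is a multiple of the diagonal} should rule out swapping behaviour. The conclusion is
\[
\xi^*[\overline{\Rcal\Bcal}^0_{g+1}] = \alpha\,[\overline{\Rcal\Bcal}^0_{g,1,0}]\times[\overline\Rcal'_{1;1}] + (\text{terms supported on boundary or with tautological K\"unneth factors})
\]
for some $\alpha>0$. The codimensions match: both loci have codimension $2g-1$ in the respective spaces, since $\dim\overline{\Rcal\Bcal}^0_{g+1}=2(g+1)-2$ inside $\dim 3g$, and $\dim\overline{\Rcal\Bcal}^0_{g,1,0}=2g-2$ inside $3g-2$.

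To make this rigorous, I would use the fibre-product diagram with $\overline{\Rcal\Adm}(g+1,1)^0$, exactly as in diagram (\ref{eq: key commutative diagram with admissible covers}). I would construct a map from $\overline{\Rcal\Adm}(g,1)^0_{\,\text{with one fixed point}}\times\overline\Rcal'_{1;1}$ into the fibre product, by attaching $E$ and its Prym cover along the fixed point, using the hyperelliptic-type involution on $E$ together with the compatible lift on $\widetilde E$. I would then show this map is surjective on geometric points. The main obstacle is this surjectivity and the multiplicity bookkeeping. Possible extra components, for instance where $E$ is swapped with a rational bridge, or limits coming from components $\Rcal\Bcal^t$ or $\Rcal\Bcal'$ specialising, must be shown either to have the wrong dimension or to be supported on $\partial\overline\Rcal_{g;1}\times\overline\Rcal'_{1;1}$. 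In the latter case their contribution is tautological by Corollary \ref{cor: pullback boundary classes}. Granting this, restricting to $\overline\Rcal_{g;1}\times\{pt\}$ yields $[\overline{\Rcal\Bcal}^0_{g,1,0}]\in\R^\bullet(\overline\Rcal_{g;1})$, contradicting the hypothesis.
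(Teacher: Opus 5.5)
\textbf{There is a genuine gap: your gluing map changes the component.} With $\xi:\overline\Rcal'_{g;1}\times\overline\Rcal'_{1;1}\to\overline\Rcal'_{g+1}$, the cover over $E$ is a connected étale double cover $\widetilde E\to E$, glued to $\widetilde C$ along the two nodes $x_1^\pm\sim y^\pm$.

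Take a generic $[\widetilde C/C,x_1]\in\overline{\Rcal\Bcal}^0_{g,1,0}$. As in the proof of Lemma \ref{lem: non tautological if add 2 conjugate points}, $\iota_1$ fixes $x_1^+$ and $x_1^-$ individually. So its extension to $\widetilde E$ must fix $y^+$ and $y^-$. Take $y^+$ as origin; then $y^-=t$ is $2$-torsion, and the required lift of $-1_E$ is $-1_{\widetilde E}$.

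\emph{The first quotient.} $\widetilde X/\iota_1$ is $\widetilde C/\iota_1\cup\PP^1$, glued at two distinct points. Its arithmetic genus is $1+0+1=2$.

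\emph{The second quotient.} This is the quotient by $\iota_1\tau$ on $\widetilde C$ together with $z\mapsto -z+t$ on $\widetilde E$. It is $\widetilde C/\iota_1\tau\cup\PP^1$ with the two nodes identified, of genus $g$.

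Intermediate genera $\{2,g\}$ put the glued curve in $\overline{\Rcal\Bcal}^1_{g+1}$, not in $\overline{\Rcal\Bcal}^0_{g+1}$. Hence the main term $\alpha[\overline{\Rcal\Bcal}^0_{g,1,0}]\times[\overline\Rcal'_{1;1}]$ is simply absent from $\xi^*[\overline{\Rcal\Bcal}^0_{g+1}]$. Your step ``$\widetilde C/\iota_1$ has genus $1$, hence type $0$'' is where this breaks. It ignores $\widetilde E$ and the loop created by the two fixed nodes.

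What does survive is van Zelm's hyperelliptic-type locus: $C=H\cup_{\bar q}E'$, with the cover trivial over $H$ and $\widetilde E'\cong\widetilde E$. There the lifted involution preserves both copies of $H$ and swaps $\widetilde E$ with $\widetilde E'$. The quotient is $\widetilde E$ with two rational tails, of genus $1$. That locus lies over $\partial\overline\Rcal_{g;1}$ and tells you nothing about $[\overline{\Rcal\Bcal}^0_{g,1,0}]$.

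\textbf{The fix is to glue the trivial cover, as the paper does.} Use $\chi_{\Rcal}:\overline\Rcal_{g;1}\times\overline\Mcal_{1,1}\to\overline\Rcal_{g+1}$ (type (3)), attaching two copies of $E$ at $x_1^\pm$. Now $\iota_1$ acts by $-1$ on each copy, fixing the attaching point. So $\widetilde X/\iota_1$ is $\widetilde C/\iota_1$ with two rational tails, and type $0$ is preserved.

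Next, start from van Zelm's formula for the pullback of $[\overline\Bcal_{g+1}]$. Check that the hyperelliptic component never lands in $\overline{\Rcal\Bcal}^0_{g+1}$ (Cases i)--iii) in the paper). This gives the exact identity $\chi_{\Rcal}^*[\overline{\Rcal\Bcal}^0_{g+1}]=\alpha[\overline{\Rcal\Bcal}^0_{g;1}\times\overline\Mcal_{1,1}]$. Proposition \ref{prop: pullback by chi admit tautological kunneth decomp} then finishes the argument.

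\textbf{Two smaller points.}
\begin{enumerate}
\item Both loci have codimension $g$, not $2g-1$.
\item A class supported on $\partial\overline\Rcal_{g;1}\times\overline\Rcal'_{1;1}$ is not automatically tautological. Corollary \ref{cor: pullback boundary classes} only computes pullbacks of boundary strata classes. So a ``main term plus boundary junk'' identity would not suffice; you need the pullback exactly.
\end{enumerate}
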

\begin{proof}
    Consider the gluing morphism:
    \[\chi_{\mc R}: \ol{\mc R}_{g;1}\times \ol{\mc M}_{1,1}\ra \ol{\mc R}_{g+1}\]
    On closed points, this maps a tuple $\left((\widetilde C,p^+,p^-)\ra (C,p), (E,q)\right)$ to the cover $\widetilde X\ra X$ where the curve $X:= C \smallcoprod E/p\sim q$ and $\widetilde X$ is the curve obtained by attaching two copies of $E$ to $\widetilde C$ and identifying $p^\pm$ with $q$.
    \vskip 0.5 em
    Consider now the following commutative diagram whose horisontal arrows are the gluing morphisms and whose vertical arrows are the forgetful ones:
    \[
    \begin{tikzcd}
        \ol{\mc R}_{g;1}\times \ol{\mc M}_{1,1} \arrow[r,"\chi_{\mc R}"] \arrow[d,"(u\text{,}id)"] & \ol{\mc R}_{g+1} \arrow[d,"u"]\\
        \ol{\mc M}_{g,1}\times \ol{\mc M}_{1,1} \arrow[r,"\chi"] & \ol{\mc M}_{g+1}.
    \end{tikzcd}
    \]
    As noted in \cite[Lemma 10]{zelm2018nontautBiell}, calling $\left[\ol{\mc H}_{g-1,0,2},\ \ol{\mc M}_{1,1}\right]$ the fundamental class of the subvariety of $\ol{\mc M}_{g,1}\times \ol{\mc M}_{1,1}$ parametrising tuples of curves $\left((C,p),\ (E,q)\right)$ where $(C,q)$ is a pointed curve of genus $g$ with an irreducible component of genus $1$, $E'$ isomorphic to $E$ attached to a hyperelliptic stable curve $H$ at the point of $H$ that is conjugate under the hyperelliptic involution to $q$, there exist two positive scalars $\alpha,\beta\in \mathbb Q$ such that
    \begin{align}\label{eq: pullbak RB g+1}
    \chi_{\mc R}^*\left[\overline{\Rcal\Bcal}_{g+1}\right] = \alpha\left[\overline{\Rcal\Bcal}_{g;1}\times \ol{\mc M}_{1,1}\right]+\beta(u,id)^*\left[\ol{\mc H}_{g-1,0,2},\ \ol{\mc M}_{1,1}\right].
    \end{align}
    \vskip 0.5 em
    Notice now that, since a generic point of $(u,id)^{-1}\left[\ol{\mc H}_{g-1,0,2},\ \ol{\mc M}_{1,1}\right]$ corresponds to a tuple $\left((\widetilde C,p^\pm)\ra (C,p),(E,q)\right)$, where $C=H\cup_{\ol q} E$ with $H$ irreducible hyperelliptic curve of genus $g-1$, we have that $\chi_\mc R(\widetilde C/C, E)\notin \overline{\Rcal\Bcal}_{g+1}^0$. Indeed, since $\widetilde C\ra C$ is a Cornalba--admissible \'etale cover of degree $2$, there are only three possible cases for what it could be.

    \vskip 0.5em
    \textit{Case i)} $\widetilde C$ is given by two copies of $H$ attached to a non--trivial double cover $\widetilde E\ra E$ by identifying the two points over $\ol q$ in $\widetilde E$ to the two copies of $\ol q\in H$. 

    \vskip 0.5em
    \textit{Case ii)} $\widetilde C$ is given by a non--trivial cover of $H$ attached to a trivial double cover of $E$.

    \vskip 0.5em
    \textit{Case iii)} $\widetilde C$ is given by a non--trivial cover of $H$ attached at the two points $\ol q^+, \ol q^-$ lying over $\ol q$ to a non--trivial double cover $\widetilde E\ra E$.
    
    \vskip 0.5 em
    Now, calling $\widetilde X\ra X$ the image under $\chi_{\mc R}$ of $\left((\widetilde C,p^\pm)\ra (C,p),(E,q)\right)$ and $\pi:\widetilde X\ra E\cup_p \mathbb P^1$ the composition $\widetilde X\ra X\ra E\cup_p\mathbb P^1$, $\pi$ is unramified outside of the cover of $H$. In particular, there is no involution $\iota_1\in \mathrm{Aut}(\widetilde X)$ making the genus of $\widetilde X/\iota_1$ smaller than $2$. Combining this with (\ref{eq: pullbak RB g+1}), we immediately get that $\chi_{\mc R}^*\left[ \overline{\Rcal\Bcal}_{g + 1}^0\right] = \alpha\left[ \overline{\Rcal\Bcal}_{g;1}^0\times \ol{\mc M}_{1,1}\right]$.
    \vskip 0.5 em
    Now, if $\left[\overline{\Rcal\Bcal}_{g;1}^0\right]$ is not tautological, $\left[ \overline{\Rcal\Bcal}_{g;1}^0\times \ol{\mc M}_{1,1}\right]$ does not admit a Chow--Künneth decomposition. Hence, by Proposition \ref{prop: pullback by chi admit tautological kunneth decomp}, $\left[\overline{\Rcal\Bcal}_{g + 1}^0\right]$ is not tautological. 
\end{proof}

\subsection{The class \texorpdfstring{$[\overline{\Rcal\Bcal}_{g, 0, 2m}^0] \notin \R\H^\bullet(\rgmbar)$ for $g + m \ge 8$}{}}

The following proposition is the key step leading to the proof of Theorem \ref{thm: classes RBbar are non-taut}.
\begin{proposition}\label{prop: nontautology of RB bar and g+m = 8}
    Assume $g \ge 2$ and $g + m = 8$ and let $\chi:\overline\Rcal_{1;7} \times \overline\Rcal_{1;7}\ra \rgmbar$ be the glueing morphism. Then  $\chi^*[\overline{\Rcal\Bcal}_{g, 0, 2m}^0]$ has a non-trivial summand in $\H^{7,0}(\ol{\mc R}_{1,7})\otimes \H^{0,7}(\ol{\mc R}_{1,7})$. In particular, $[\overline{\Rcal\Bcal}_{g, 0, 2m}^0] \in \CH^\bullet(\rgmbar)$ is non--tautological. 
\end{proposition}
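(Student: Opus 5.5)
We follow the strategy of \cite{zelm2018nontautBiell} and \cite{canningNonTautCycles}, adapted to the Prym setting. With $g+m=8$ we have $k=g-1+m=7$. We work with the primed spaces $\overline\Rcal_{1;7}'$ and descend at the end. The forgetful map $\overline\Rcal_{1;7}'\rarr\overline\Rcal_{1;7}$ is finite and surjective, and pullback along it is injective on cohomology. So it suffices to find the required non-trivial $(7,0)\otimes(0,7)$ summand after pulling back to $\overline\Rcal_{1;7}'\times\overline\Rcal_{1;7}'$, with the group of orderings of the fibres acting.

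\textbf{Step 1: the class on the interior.} Write $Z:=\chi^*[\overline{\Rcal\Bcal}_{g,0,2m}^0]\in \H^{14}(\overline\Rcal_{1;7}\times\overline\Rcal_{1;7})$, a class of complex codimension $7=\dim\overline\Rcal_{1;7}$. By Lemma \ref{lemma: pullback of biell cmp in Rgm is a multiple of the diagonal}, the restriction of $Z$ to $\Rcal_{1;7}'\times\Rcal_{1;7}'$ equals $\alpha[\Delta]$ with $\alpha>0$. Hence $Z-\alpha[\overline\Delta]$ restricts to zero on the interior. By the excision sequence, it is therefore the pushforward of a class supported on the boundary $\partial(\Rcal_{1;7}\times\Rcal_{1;7})$.

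\textbf{Step 2: Künneth decomposition of the diagonal.} We write $[\overline\Delta]=\sum_{i}\pm\, e_i\otimes e_i^\vee$ over dual bases of $\H^\bullet(\overline\Rcal_{1;7})$. By Theorem \ref{thm: key basic result on R1m} ii), $\H^{7,0}(\overline\Rcal_{1;7})\neq 0$, since $7$ is odd and a weight $8$ cusp form for $\Gamma_1(2)$ exists by Lemma \ref{lemma: first nonzero cusp form for Gamma_1(2)}. Choose $\omega\ne0$ in $\H^{7,0}$. Then $[\overline\Delta]$ contains the summand $\omega\otimes\bar\omega$ (suitably normalized) in $\H^{7,0}\otimes\H^{0,7}$, and this summand is non-zero.

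\textbf{Step 3: the boundary correction does not cancel it.} By Corollary \ref{cor: classes on bd R1 7 taut} i), the boundary-supported codimension-$7$ class from Step 1 has a tautological Künneth decomposition. By Theorem \ref{thm: key basic result on R1m} iii), the relevant tautological classes are even-degree and of Hodge type $(p,p)$, so their Künneth components lie in $\bigoplus \H^{p,p}\otimes\H^{q,q}$. These components have no part in $\H^{7,0}\otimes\H^{0,7}$, because a tautological summand there would force $\H^{7,0}$ to contain tautological (hence $(p,p)$) classes, which is impossible. Therefore the $\H^{7,0}\otimes\H^{0,7}$ component of $Z$ equals $\alpha\,\omega\otimes\bar\omega$ plus the corresponding other terms, and it is non-zero.

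\textbf{Step 4: conclusion.} If $[\overline{\Rcal\Bcal}_{g,0,2m}^0]$ were tautological, Proposition \ref{prop: pullback by chi admit tautological kunneth decomp} would give $Z$ a tautological Künneth decomposition. In particular $Z$ would have no $\H^{7,0}\otimes\H^{0,7}$ component, contradicting Step 3.

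The main obstacle is Step 3. One must check that the boundary-supported correction genuinely has tautological Künneth components. This is why Corollary \ref{cor: classes on bd R1 7 taut} is stated for classes of exactly codimension $7$ supported on the boundary: its proof reduces, via the gluing maps and Proposition \ref{prop: pullback by chi admit tautological kunneth decomp}, to spaces $\overline\Rcal_{1;j}$ with $j<7$ and to genus $0$ spaces. These have tautological cohomology by Theorem \ref{thm: key basic result on R1m} i). A secondary point to verify is that the positivity $\alpha>0$ from Lemma \ref{lemma: pullback of biell cmp in Rgm is a multiple of the diagonal} survives the passage from primed to unprimed spaces. This holds because the covering maps have positive degree.
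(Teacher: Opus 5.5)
Your proposal is correct and follows essentially the same route as the paper. Excision writes the pulled-back class as $\alpha[\Delta]$ plus a boundary-supported cycle; read this step in Chow groups, so that the correction is algebraic as the boundary corollary requires. The boundary corollary gives that cycle a tautological K\"unneth decomposition, and $\H^{7,0}(\overline\Rcal_{1;7})\neq 0$ forces a non-zero $\H^{7,0}\otimes\H^{0,7}$ component in the diagonal. The only difference is organisational: you extract that component directly instead of arguing by contradiction, which proves the first assertion of the proposition explicitly rather than implicitly.
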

\begin{proof}
    Consider the map $\overline\Rcal_{1;7}' \times \overline\Rcal_{1;7}' \xrightarrow[]{\chi} \rgmbar' \xrightarrow[]{\phi} \rgmbar$ and suppose by contradiction that $[\overline{\Rcal\Bcal}_{g, 0, 2m}^0] \in \R^\bullet(\rgmbar)$. Pulling back to $\overline{\Rcal}_{1;7}' \times \overline\Rcal_{1;7}'$ and using Excision, we get that
    \[
    (\phi\circ \chi)^*[\overline{\Rcal\Bcal}_{g, 0, 2m}^0] = \alpha\cdot [\Delta] + [Z]
    \]
    where $\Delta \subset \overline{\Rcal}_{1;7}' \times \overline\Rcal_{1;7}'$ is the diagonal and $Z \subset \overline{\Rcal}_{1;7}' \times \overline\Rcal_{1;7}'$ is a cycle supported on $\partial({\Rcal}_{1;7}' \times \Rcal_{1;7}')$. Using gluing maps and Theorem \ref{thm: key basic result on R1m}, we see that all classes supported on $\partial({\Rcal}_{1;7}' \times \Rcal_{1;7}')$ admit tautological Künneth decomposition. All these put together imply that $[\Delta] \in \H^\bullet(\overline{\Rcal}_{1;7}' \times \overline\Rcal_{1;7}')$ admits tautological Künneth decomposition.

    \vskip 0.5em
    On the other hand, let $(e_i)_{i \in I}$ be a basis for $\H^\bullet(\Rcal_{1;7}')$ with dual basis $(e_i^\vee)_{i \in I}$. The class of the diagonal is given by
    \[
    [\Delta] = \sum_{i \in I} e_i \otimes e_i^\vee \in \H^{14}(\Rcal_{1;7}' \times \Rcal_{1;7}').
    \]
    By Theorem \ref{thm: key basic result on R1m}, $\H^{7, 0}(\overline\Rcal_{1;7}) \ne 0$ so that odd cohomology appears in the decomposition of $[\Delta]$. But this is contradiction with Proposition \ref{prop: pullback by chi admit tautological kunneth decomp}, finishing the proof.
\end{proof}

Proposition \ref{prop: nontautology of RB bar and g+m = 8} coupled with Lemma \ref{lem: non tautological if add ramification point}, Lemma \ref{lem: non tautological if add 2 conjugate points}, Lemma \ref{lem: non tautological if add node}, and Lemma \ref{lemma: non-vanishing hol forms on interior} immediately give Theorem \ref{thm: main non--tautology theorem}. Proceeding identically to the proof of Propositions \ref{prop: nontautology of RB bar and g+m = 8}, we obtain the following result.
\begin{corollary}\label{cor: coro on bad summand}
     Assume $g \ge 2$, $k+1:=g + m \geq  8$ even. Let $\chi:\overline\Rcal'_{1;k} \times \overline\Rcal'_{1;k}\ra \rgmbar$ be the glueing morphism. The class $\chi^*[\overline{\Rcal\Bcal}_{g, 0, 2m}^0]$ has a non-trivial summand in $\H^{k,0}(\ol{\mc R}_{1,k})\otimes \H^{0,k}(\ol{\mc R}_{1,k})$. 
\end{corollary}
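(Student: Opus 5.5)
We follow the proof of Proposition \ref{prop: nontautology of RB bar and g+m = 8}, replacing $7$ by $k = g - 1 + m$, which is odd and at least $7$ because $g + m = k+1 \ge 8$ is even. First, consider the composite $\overline\Rcal'_{1;k} \times \overline\Rcal'_{1;k} \xrightarrow{\chi} \overline\Rcal'_{g;2m} \xrightarrow{\phi} \rgmbar$. We restrict $(\phi\circ\chi)^*[\overline{\Rcal\Bcal}^0_{g,0,2m}]$ to the interior $\Rcal'_{1;k}\times\Rcal'_{1;k}$. Lemma \ref{lemma: pullback of biell cmp in Rgm is a multiple of the diagonal} applies here, since its hypothesis is $g\ge 2$, $g+2m\ge 8$ with the same $k$. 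It gives that this restriction equals $\alpha[\Delta]$ with $\alpha\in\QQ_{>0}$. By the excision sequence for the pair $(\overline\Rcal'_{1;k}\times\overline\Rcal'_{1;k},\ \partial)$ we can then write
\[
(\phi\circ\chi)^*[\overline{\Rcal\Bcal}^0_{g,0,2m}] = \alpha[\Delta] + [Z]
\]
in cohomology, where $Z$ is an algebraic cycle of codimension $k$ supported on the boundary $\partial(\Rcal'_{1;k}\times\Rcal'_{1;k})$.

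Second, we isolate the $\H^{k,0}\otimes\H^{0,k}$ Künneth component of the right-hand side. For $[\Delta]$ we use its expression $\sum_i e_i\otimes e_i^\vee$ with respect to a basis of the cohomology of $\overline\Rcal'_{1;k}$ (compact, so Poincaré duality applies). Its projection to $\H^{k,0}\otimes\H^{0,k}$ is $\sum_j \omega_j\otimes\overline{\omega}_j$ up to normalisation, where $\omega_j$ runs over a basis of $\H^{k,0}$. Theorem \ref{thm: key basic result on R1m} $ii)$ says $\H^{k,0}(\overline\Rcal_{1;k})\neq 0$ for odd $k\ge7$. These forms pull back nontrivially to $\overline\Rcal'_{1;k}$ along the finite cover $\overline\Rcal'_{1;k}\to\overline\Rcal_{1;k}$, since pullback along a finite surjective map is injective. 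So this component is non-zero. This is exactly where the parity condition is used: for $k$ even, $\H^{k,0}$ gives no such input.

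Third, we show that $[Z]$ contributes nothing to $\H^{k,0}\otimes\H^{0,k}$. A cycle supported on the boundary is pushed forward from boundary strata of the form $\partial\overline\Rcal'_{1;k}\times\overline\Rcal'_{1;k}$ or $\overline\Rcal'_{1;k}\times\partial\overline\Rcal'_{1;k}$. The Künneth components of such a pushforward in the first factor are Gysin images from the boundary of $\overline\Rcal'_{1;k}$. A Gysin image raises $(p,q)$ by $(1,1)$ from a smooth proper (normalised) stratum, so it has $q\ge1$ and cannot lie in $\H^{k,0}$; the symmetric statement holds for the second factor. Hence the component of $[Z]$ in $\H^{k,0}\otimes\H^{0,k}$ vanishes. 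This is the pure Hodge-theoretic form of Corollary \ref{cor: classes on bd R1 7 taut} $i)$ for general $k$, obtained via the gluing maps of Example \ref{ex: gluing and clutching maps} and Theorem \ref{thm: key basic result on R1m}. Adding up, the pullback $\chi^*[\overline{\Rcal\Bcal}^0_{g,0,2m}]$ has a non-trivial $\H^{k,0}\otimes\H^{0,k}$ summand, which proves the statement.

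The main obstacle is the first step. We must make sure that Lemma \ref{lemma: pullback of biell cmp in Rgm is a multiple of the diagonal} indeed gives a clean multiple of the diagonal with no extra interior components, and that the codimensions match: $\dim \overline\Rcal'_{1;k}=k$, and $\overline{\Rcal\Bcal}^0_{g,0,2m}$ has codimension $(3g-3+2m)-(2g-2+m)=g-1+m=k$. If extra interior components appeared, we would fall back on the surjectivity of $\zeta$ from the lemma to exclude them.
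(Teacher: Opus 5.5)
Your proof is correct. Its skeleton is the one the paper uses, since the paper just says to repeat the proof of Proposition~\ref{prop: nontautology of RB bar and g+m = 8} with $7$ replaced by $k$. Lemma~\ref{lemma: pullback of biell cmp in Rgm is a multiple of the diagonal} and the localization sequence give $\alpha[\Delta]+[Z]$ with $Z$ supported on the boundary. Theorem~\ref{thm: key basic result on R1m}~$ii)$, for $k$ odd, makes the $\H^{k,0}\otimes\H^{0,k}$ component of $[\Delta]$ non-zero.

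Where you genuinely differ is in how you dispose of $[Z]$.

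\emph{The paper's route.} It asserts that every boundary-supported class has a tautological K\"unneth decomposition. This is Corollary~\ref{cor: classes on bd R1 7 taut}, which rests on Theorem~\ref{thm: key basic result on R1m}~$i)$, $iii)$ and Proposition~\ref{prop: pullback by chi admit tautological kunneth decomp}. That justification really only covers $k=7$, where every boundary stratum of $\overline\Rcal'_{1;7}$ has tautological cohomology. For $k\ge 9$ the boundary of $\overline\Rcal'_{1;k}$ contains strata built from $\overline\Rcal'_{1;j}$ with $7\le j<k$, which carry odd cohomology. So the analogue of Corollary~\ref{cor: classes on bd R1 7 taut}~$i)$ does not follow from the results stated, and it is not clear that it holds. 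Think, for instance, of graphs of forgetful maps pushed into a boundary divisor.

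\emph{Your route.} You write each component of $Z$ as a pushforward from $\widetilde D\times\overline\Rcal'_{1;k}$, or the symmetric product, where $\widetilde D$ is a smooth proper normalised boundary stratum. You then use that the Gysin map $\H^{k-2}(\widetilde D)\to\H^{k}(\overline\Rcal'_{1;k})$ has Hodge type $(1,1)$, so it can never reach $\H^{k,0}$. This is purely Hodge-theoretic, needs no knowledge of the cohomology of the boundary strata, and works uniformly in $k$. It is the same mechanism as Lemma~\ref{lemma: non-vanishing hol forms on interior}, which the paper itself uses later to handle the cycles $B_i$. Since only the $\H^{k,0}\otimes\H^{0,k}$ component is needed, your argument is the more robust one.

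Two small remarks. Your sentence attributing step three to ``the gluing maps and Theorem~\ref{thm: key basic result on R1m}'' is unnecessary, because your Gysin argument uses neither. The fallback to surjectivity of $\zeta$ is also unnecessary, because it is already contained in Lemma~\ref{lemma: pullback of biell cmp in Rgm is a multiple of the diagonal}.
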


\begin{proof}[Proof of Theorem \ref{thm: main non--tautology theorem}]
Suppose that $\left[\Rcal\Bcal_{g, 0, 2m}^0\right]\in \R^\bullet(\mc R_{g;2m})$ with $k +1 := g+m \ge 8$ even. By Excision, we have a collection of prime cycles $B_i$ of codimension $k + 1$ and supported on $\partial\mc R_{g;2m}$ such that
\[
\gamma :=\left[\overline{\Rcal\Bcal}_{g, 0, 2m}^0\right]+\sum_i [B_i]
\]
is tautological. Now, by Proposition \ref{prop: pullback by chi admit tautological kunneth decomp}, calling $\chi$ the gluing morphism $\ol{\mc R}'_{1;k}\times \ol{\mc R}'_{1;k}\ra \ol{\mc R}'_{g;2m}$, we have that $\chi^*\gamma$ admits a tautological K\"unneth decomposition, in particular, it has no summand in $\H^{k,0}(\ol{\mc R}_{1,k}')\otimes \H^{0,k}(\ol{\mc R}_{1,k}')$.

\vskip 0.5 em
However, since $\chi^*\left[\overline{\Rcal\Bcal}_{g, 0, 2m}\right]$ restricts on $\Rcal'_{1;k}\times \Rcal'_{1;k}$ to a non--zero multiple of the diagonal by Lemma \ref{lemma: pullback of biell cmp in Rgm is a multiple of the diagonal}, the Hodge--K\"unneth decomposition of the diagonal, Theorem \ref{thm: key basic result on R1m}, and Lemma \ref{lemma: non-vanishing hol forms on interior} together imply that $\chi^*\left[\overline{\Rcal\Bcal}_{g, 0, 2m}\right]$ has non--zero part in $\H^{k, 0}(\overline{\Rcal}'_{1;k}) \otimes \H^{0, k}(\overline{\Rcal}'_{1;k})$. It follows that the pullback $\chi^*\sum_i [B_i]$ has non--trivial contribution from $\H^{k, 0}(\overline{\Rcal}_{1;k}')\otimes \H^{0, k}(\overline{\Rcal}_{1;k}')$.

\vskip 0.5 em
Let $\Delta_{h:g-h}$  be the boundary component of $\ol{\mc R}'_{g;2m}$ generically parametrising stable Cornalba--admissible \'etale covers $\pi:\widetilde C\ra C$ where $C$ is the union of two irreducible components of genus $h$ and $g-h$ meeting at $p$ while $\widetilde C$ consists of the union of two non-trivial covers of these two components meeting at the preimages of $p$. Assume $B_i$ is supported on $\Delta_{h:g-h}$. Since no element in the image of $\chi|_{\Rcal_{1;k}'\times\Rcal_{1;k}'}$ does not have a separating node, $\chi^*[B_i]$ is supported on the boundary of $\ol{\mc R}'_{1;k}\times \ol{\mc R}'_{1;k}$. In particular it restricts trivially to the interior. Lemma \ref{lemma: non-vanishing hol forms on interior} then implies that it does not have a non--trivial summand in $\H^{k,0}(\ol{\mc R}'_{1,k})\otimes \H^{0,k}(\ol{\mc R}_{1,k}')$.

\vskip 0.5 em
Similarly, calling $\Delta_0^{\mathrm{ram}}\subset \ol{\mc R}_{g;2m}'$ the boundary divisor generically parametrising $2:1$ covers of smooth irreducible genus $g-1$ curves ramified at $2$ points, if $B_i$ were supported on $\Delta_0^{\mathrm{ram}}$, since any cover in the image of $\chi_{|_{{\mc R}'_{1;k}\times {\mc R}'_{1;k}}}$ is unramified, the same argument as above shows that $\chi^*[B_i]$ does not have a non--trivial summand in $\H^{k,0}(\ol{\mc R}_{1,k}')\otimes \H^{0,k}(\ol{\mc R}_{1,k}')$.

\vskip 0.5 em
Let $\Delta_0''\subset \ol{\mc R}'_{g;2m}$ be the boundary divisor given by the clutching map $\ol{\mc M}_{g-1,2m+2}\ra \ol{\mc R}'_{g;2m}$ and assume that $B_i$ were supported on $\Delta_0''$. Notice now that $\chi_{|_{{\mc R}'_{1;k}\times {\mc R}'_{1;k}}}$ does not intersect $\Delta''_0$. Indeed, by our description of the boundary divisors, any point $[\widetilde C\ra C]\in\Delta''_0$ with reducible base would have a source with at least $4$ components. Since any point of $\chi(\mc R'_{1;k}\times {\mc R}'_{1;k})$ is of the form $\widetilde C_1\cup \widetilde C_2\ra C_1\cup C_2$ with $\widetilde C_i$ smooth irreducible, $\chi^*B_i$ must then be supported on the boundary. Again Lemma \ref{lemma: non-vanishing hol forms on interior} then implies that it does not have a non--trivial summand in $\H^{k,0}(\ol{\mc R}_{1,k}')\otimes \H^{0,k}(\ol{\mc R}_{1,k}')$.

\vskip 0.5 em 
Let now $\Delta_0'\subset \ol{\mc R}_{g;2m}$ be the boundary divisor generically parametrising $2:1$ \'etale covers of irreducible nodal curves of geometric genus $g-1$ and assume that $B_i$ were supported on $\Delta_0'$. Factoring $\chi$ as
\[
\begin{tikzcd}
    \ol{\mc R}'_{1;k}\times \ol{\mc R}'_{1;k} \arrow[r,"\chi_1"] & \ol{\mc R}'_{g;2m+2} \arrow[r,"\chi_2"] & \ol{\mc R}'_{g;2m},
\end{tikzcd}\]
there would then exist cycles $A_i$ such that $[B_i]=\chi_{2,*}[A_i]$, so that
\[\chi^*[B_i]=\chi_1^*\left(c_1\left(\mc N_{\chi_2}\right)\left[A_i\right]\right)=  \chi_1^*\left(c_1\left(\mc N_{\chi_2}\right)\right)\chi_1^*\left[A_i\right].\]
Since, by Lemma \ref{lem: normal bundle}, $c_1\left(\mc N_{\chi_2}\right)$ is tautological, $\chi_1^*\left(c_1\left(\mc N_{\chi_2}\right)\right)$ is algebraic and hence of type $(1,1)$. In particular, $\chi^*[B_i]$ cannot have a non--trivial summand in $\H^{k,0}(\ol{\mc R}'_{1,k})\otimes \H^{0,k}(\ol{\mc R}'_{1,k})$.
\end{proof}

\bibliography{bibliography}
\bibliographystyle{amsalpha}

\end{document}